\theoremstyle{plain}
\newtheorem{theorem}{Theorem}[section]
\newtheorem{lemma}[theorem]{Lemma}
\newtheorem{proposition}[theorem]{Proposition}
\newtheorem{definition}[theorem]{Definition}
\newtheorem{remark}[theorem]{Remark}
\newtheorem{corollary}[theorem]{Corollary}
\def\R{\mathbb{R}}
\def\Z{\mathbb{Z}}
\def\vp{\varepsilon}
\def\u{\underline}
\def\o{\overline}
\def\om{\omega}
\def\bb{\mathbb}
\def\limsup{\operatornamewithlimits{\overline{\text{\rm{lim}}}}}
\def\liminf{\operatornamewithlimits{\underline{\text{\rm{lim}}}}}
\renewcommand{\epsilon}{\varepsilon}
\renewcommand{\phi}{\varphi}
\DeclareMathSymbol{\varnothing}{\mathord}{AMSb}{"3F}
\renewcommand{\emptyset}{\varnothing}
\def\bf{\textbf}
\def\cal{\mathcal}
\begin{document}

\date{}

\title{Pressures for multi-potentials in semigroup dynamics}

\author{Eugen Mihailescu}

\maketitle

\begin{abstract}

On compact metric spaces $X$ we introduce and study the amalgamated pressure,  condensed pressure,   trajectory pressure,  exhaustive pressure, and their capacities  on  $\mathcal C(X, \mathbb R^m)$, for non-compact sets $Y \subset X$,  with respect to finitely generated semigroups $G$ acting on  $X$.    Properties and comparisons between these pressures, and relations with inverse limits are given. Next we introduce  a measure-theoretic amalgamated entropy, and prove  a Partial Variational Principle for amalgamated pressure.  Local amalgamated and local exhaustive entropies are studied for measures on $X$, and they provide estimates for amalgamated  and exhaustive topological entropies. For marginal ergodic measures $\nu$ on $X$, we find an upper bound for the  local exhaustive entropy. These notions can be used for classification of semigroup actions.  We  apply  the amalgamated pressure of unstable multi-potentials, also  to estimate the dimensions of slices in $G$-invariant saddle-type sets. 
\end{abstract}

\

\textbf{MSC 2020:}   \ {37A35, 37A05,  37D35,  37H12, 46E27, 46G10, 46G12.}

\textbf{Keywords:} Pressure functionals; dynamics of semigroups; local entropies; ergodic measures; marginal measures; invariant sets; Hausdorff dimension; random walks.

 \section{Introduction.}
 We introduce and explore several new nonlinear functionals of pressure and their capacities on non-compact sets in compact metric spaces $X$, for continuous multi-potentials $\Phi \in \mathcal C(X, \R^m)$,  with respect to a semigroup $G$ of maps on $X$ with  finite set of generators $G_1$. These help to develop the ergodic theory for  the case when several transformations $f_1, \ldots, f_m$ act simultaneously on $X$ and at each step $n \ge 1$, the continuous observables $\phi_1, \ldots, \phi_m$ are evaluated  on the arbitrary trajectory $\{x, f_{j_1}(x), f_{j_2}\circ f_{j_1}(x), \ldots, f_{j_{n}}\circ\ldots\circ f_{j_1}(x)\}$ of $x \in X$ in a way which depends on the path $C = (j_1, \ldots, j_n) \in \{1, \ldots, m\}^n$. 
 
 In ergodic theory, the notion of topological pressure for a single potential $\phi$ plays an important role, and its theory is now well developed (for eg \cite{Bo-carte},  \cite{KH}, \cite{Ru1}, \cite{Wa}). In \cite{Bo0} Bowen also  studied the topological entropy for general non-compact sets, and in particular in the case of  Borel sets consisting of the generic points for invariant probability measures on $X$. 
  A general unifying notion for various types of entropy, pressure and dimensions on non-compact sets, which has found many applications in dynamics and dimension theory, is that of  Carath\'eodory-Pesin structures (\cite{Pe}). 
 
It appeared moreover the need for a notion of entropy for a (semi)group of maps, for example in the case of a foliation on a manifold and a pseudo-group of holonomy maps, such as the one studied by Ghys, Langevin and Walczak in  \cite{GLW}. 
Various types of entropy and topological pressure for the dynamics of semigroups, and their properties together with connections with the Carath\'eodory-Pesin structures,  were studied by many authors.  

 In particular, the dynamics for finitely generated semigroup actions on compact metric spaces, and its connections with other fields, were investigated in these papers.  Bi\'s \cite{Bi} and Hofmann and Stoyanov \cite{HS} studied a type of entropy for finitely generated semigroups, which involves intersections of the Bowen balls along all  trajectories of order $n$ of arbitrary points $x \in X$. In \cite{Bu} Bufetov defined a type of entropy for free semigroup actions, by using averages of cardinalities of spanning sets over all trajectories, and by using also a skew product transformation. This idea was then extended to a notion of pressure for real-valued potentials for free semigroup actions, whose properties were studied by Carvalho, Rodrigues and Varandas in \cite{CRV} and by Lin, Ma and Wang in \cite{LMW}. 
 In a sense, semigroup dynamics has similarities also with the  dynamics along prehistories of  non-invertible maps.  A notion of preimage entropy which takes in consideration the preimage sets was studied by Hurley in \cite{Hu}. 

Relations between topological pressure, entropy, Lyapunov exponents, and Hausdorff dimension for quasicircles and Julia sets, were first studied  by Bowen  (\cite{Bo1}, and Ruelle (\cite{Ru2}).  The dimensions of stable and unstable intersections with hyperbolic sets of diffeomorphism were studied by Manning and McCluskey in \cite{MM}. And in \cite{Y} Young found a formula for the dimension of a hyperbolic invariant measure for a surface diffeomorphism. This study was then developed for  diffeomorphisms with non-uniform hyperbolicity  by several authors, for eg in Pesin \cite{Pe}. Also relations between pressure, preimages distributions, and  dimensions for endomorphisms were studied in \cite{M-MZ} and by Mihailescu and Urba\'nski  \cite{MU-BLMS}. 
 
\
 
 In the sequel, we explore several new types of  pressure and their capacities on non-compact sets for the multi-potential $\Phi = (\phi_1, \ldots, \phi_m) \in \mathcal C(X, \R)^m$ with respect to the semigroup $G$ of endomorphisms of $X$ generated by  $G_1 = \{id_X, f_1, \ldots, f_m\}$, in order to study the simultaneous dynamics  of  $f_1, \ldots, f_m$ on $X$ when the observables $\phi_1, \ldots, \phi_m$ are evaluated depending on the random trajectory in $X$.

 Thus,  we introduce the \textbf{amalgamated pressure}, the \textbf{condensed pressure}, the \textbf{pressure along a set $\cal Y$ of trajectories}, and the \textbf{exhaustive pressure}, together with their respective \textbf{capacities} on non-compact subsets $Y \subset X$. These nonlinear functionals will be defined in general by employing Carath\'eodory-Pesin structures (\cite{Pe}). However, one of the functionals above, namely the amalgamated pressure over a set $\mathcal Y$ of trajectories, is not necessarily  a Carath\'eodory-Pesin structure. These different types of pressure functionals correspond to taking various trajectories using the  maps $f_j, 1 \le j \le m$. We have finitely many  systems represented by the maps $f_j, 1 \le j \le m$, on a space of states $X$, and $m$ observables  (coordinates of position, momentum, spin, etc, see for eg \cite{CE}) given by the  potentials $\phi_j, 1 \le j \le m$. The consecutive sums $S_n \Phi(x, \om)$ that we use, give the combined value along the trajectory $\om$ of the observable $\phi_{\om_k}$ measured after $f_{\om_k}$ has been applied, for $1 \le k \le n$. 
 The trajectories determined by semigroups can be encountered also in the dynamics of non-invertible maps, for local inverses and prehistories (for eg \cite{FM}, \cite{M-MZ}, \cite{MU-BLMS}). 
  
 Some of our notions of pressure (like the amalgamated pressure, the exhaustive pressure, and pressure over a set of trajectories) are different from previous notions  in the literature, while other 
notions (like the condensed pressure) extend previous notions of pressure for single potentials.  Our notions of pressure are different in general from the pressure for free semigroups studied in \cite{Bu}, \cite{CRV}, \cite{LMW}. When all potentials $\phi_j$ are equal,  the above notions apply to single real-valued potentials. In this case our condensed pressure extends the notion studied in \cite{Bi}, \cite{HS}. 

  As an example, one can think of a random walk on the lattice $\mathbb Z^2$, given by four independent transformations $f_1, \ldots, f_4$ defined by going up, down, left or right, and observables $\phi_1, \ldots, \phi_4$  on $\Z^2$. For  $C = (j_1, \ldots, j_n) \in\{1, \ldots, 4\}^n$, the combined value of  observables on a $C$-trajectory is $\phi_{j_1}(x) + \phi_{j_2}(f_{j_1}(x)) + \ldots + \phi_{j_n}(f_{j_{n-1}}\circ \ldots \circ f_{j_1}(x)),$
  which is the consecutive sum that we  use in the sequel. If $\phi_1(x), \ldots, \phi_4(x)$ represent the amount gained/lost when going from $x$ up, down, left or right respectively,  this combined value is the total amount  after walking on $C$. This is similar to a random walk in random scenery, introduced  by Borodin \cite{Bor} and  Kesten and Spitzer \cite{KS}.

The above pressures have also  connections  with random dynamics (\cite{Ar}, \cite{KS}), dynamics of non-invertible maps (\cite{M-MZ}-\cite{MU-JFA}, \cite{N}), and abelian  semigroups of toral endomorphisms (\cite{Be}).
 
 \

 To summarize our results, in \textbf{Section 2} the notions of amalgamated, condensed, and exhaustive pressures and their capacities are introduced for multi-potentials $\Phi$ with respect to finitely generated semigroups $G$ on the compact metric space $X$. Also a notion of pressure along arbitrary sets of infinite trajectories is introduced.
 
 In \textbf{Section 3}, we  give relations between these types of pressure and their capacities, and we \textbf{compare} between them. 
  For instance the lower  exhaustive pressure $P_l^+$ will be shown to be smaller than the amalgamated pressure $P^A$, which in turn will be shown to be smaller than a notion of free pressure $P_{free}$, which in turn is smaller than our upper condensed pressure $P_u$. Thus, these  notions of pressure are needed when subtle distinctions need to be made in the long-term behaviour of the system. They can be used also for classification of semigroup actions, up to conjugacy.
  
Then we study these notions in examples, namely for  \textbf{semigroups of toral endomorphisms}. Our results are also related to the problem of invariant closed sets in $\mathbb T^k, k \ge 2$ for commutative semigroups of toral endomorphisms studied in \cite{Be}.
 
 Next in \textbf{Section 4}, we consider the \textbf{inverse limit} $\hat X_{G_1}$ of the dynamics of the semigroup $G$ generated by   $G_1$, and connect it with our notions. 
 We explore relations between the amalgamated pressure and the exhaustive pressure of a multi-potential $\Phi$, and the pressure of the associated potential $\Phi^+$ on the lift  $\Sigma_m^+\times X$.
 
 In \textbf{Section 5}, we adopt the view that the most significant ``invariant'' measures for the semigroup action are the $F$-invariant probability measures on $\Sigma_m^+ \times X$ (see for eg \cite{Ar}). Indeed, measures that are preserved by all elements of $G$ may be very few or may not even exist; also, stationary measures on $X$ with respect to some shift-invariant measure $\rho$ on $\Sigma_m^+$ may not contain all the information. Thus, we introduce a  measure-theoretic amalgamated entropy $h^A(\mu, G_1)$ for the projection on $X$ of an $F$-invariant probability measure $\mu$, which takes into consideration the combined actions on $X$ of all the maps from $G$. Using this type of entropy $h^A(\mu, G_1)$, we  prove a \textbf{Partial Variational Principle} for amalgamated pressure $P^A(\Phi)$ of multi-potentials on $X$.

 Then in \textbf{Section 6}, by analogy with the local entropy introduced by Brin and Katok (\cite{BK}), we define  \textbf{local amalgamated entropies} $h^u_\mu(x, G_1), h^l_\mu(x, G_1)$ of probability measures $\mu$ on $X$. We prove that, if we can estimate them on a Borel set $Y$ of positive $\mu$-measure, this gives the amalgamated  entropy $h^A$ on $Y$. We also introduce the \textbf{local exhaustive entropy} of $\mu$, $h^+_\mu(x, G_1)$. Moreover, for \textbf{marginal measures} $\nu$ on $X$ of $F$-invariant ergodic probability $\hat \mu$, we find an upper bound for the local exhaustive entropy $h^+_\nu(x, G_1)$ and for the lower local amalgamated entropy $h_\nu^l(x, G_1)$ of $\nu$, at every point $x$ in a Borel set $A\subset X$ of full $\nu$-measure. We prove that $h^+_\nu(x, G_1)$ and $h_\nu^l(x, G_1)$ are bounded above by $h(\hat\mu)-h(\pi_{1*}\hat\mu)$, for $x \in A$.

 In \textbf{Section 7}, we study smooth maps $f_j$ which are  injective on a neighbourhood of a $G$-invariant compact set $\Lambda \subset \bb R^D$. We do not assume that the maps $f_j\in G_1, 1 \le j \le m$ have any common stable or unstable tangent subspaces, only that there exist  continuous $G$-invariant unstable and stable \textbf{cone fields} $C_1(\cdot)$ and  $C_2(\cdot)$ respectively on $\Lambda$. Then for any $x \in \Lambda$, $C_1(x)$ contains the unstable tangent spaces $E^u(x, f_j)$,  and $C_2(x)$ contains the stable tangent spaces $E^s(x, f_j)$, for $1 \le j \le m$. The associated \textbf{unstable multi-potential} of $G_1$ on $\Lambda$ is, $$\Phi^u(x) := (-\log m(Df_1| E^u(x, f_1)), \ldots, -\log m(Df_m| E^u(x, f_m)), \ x \in \Lambda,$$ where in general $m(L| E)$ denotes the minimal expansion of a linear map  $L: \mathbb R^D \to \mathbb R^D$ on a linear subspace $E\subset \mathbb R^D$. Then, the amalgamated pressure $P^A$ of $\Phi^u$ is employed to study  the \textbf{Hausdorff dimension of intersections} between $\Lambda$ and arbitrary submanifolds $\Delta \subset B(x, r)$ transversal to the core  subspace of $C_2(x)$, for $x \in \Lambda$. The unique zero $t^{uA}_{G_1}$ of $P^A(t\Phi^u, G_1)$ gives a better dimension estimate, than the estimates obtained with the usual pressure for the individual maps $f_j, $  $1 \le j \le m$.

 \section{Pressure functionals of multi-potentials for finitely generated semigroups.}

Let $(X, d)$ be a compact metric space, a fixed integer $m \ge 1$, and a finite set of continuous maps  $G_1 = \{id_X, f_1, \ldots, f_m\}$ defined on $X$. 
For any  $n \ge 1$, denote by $G_n$ the collection of compositions of at most $n$ functions from $G_1$. The semigroup $G$ is assumed to be generated by $G_1$, with  composition.

Let $\Sigma_m^*$ be the set of all sequences $(i_1, \ldots, i_n)$,  $i_j \in \{1, \ldots, m\}, 1\le j \le n, n \ge 1$. If $C = (i_1, \ldots, i_n)\in \Sigma_m^*$ denote the \textit{length} of $C$ by $n(C)$, and if $j_1, \ldots, j_n \ge 1, n \ge 1$ let $$f_{j_1\ldots j_n} := f_{j_1}\circ \ldots \circ f_{j_n}$$

For any $y \in X$,  $n \ge 1$, $\vp >0$,  $C = (i_1, \ldots, i_n) \in \Sigma_m^*$, define the  \textbf{Bowen ball} at $y$ along the trajectory $C$ and of radius $\vp$ (or the $(n, \vp)$-Bowen ball at $y$ along $C$) by, 
\begin{equation}\label{bowen}
B_n(y, C, G_1, \vp):= \{z \in X, d(y, z) < \vp, d(f_{i_1}(y), f_{i_1}(z)) < \vp, \ldots, d(f_{i_n\ldots i_1}(y), f_{i_n\ldots i_1}(z)) < \vp\}
\end{equation}
Denote by $\Sigma_m^+:= \{\om = (\om_0, \om_1, \ldots), \om_j \in \{1, \ldots, m\}, j \ge 0\}$ the 1-sided shift space on $m$ symbols,  with the canonical metric and shift map $\sigma: \Sigma_m^+ \to \Sigma_m^+$. 
If $G_1$ is given, and $\om \in \Sigma_m^+$, we also denote the above $(n, \vp)$-Bowen ball along the trajectory $\om$ by, 
\begin{equation}\label{bowensimple}
B_n(x, \om, \vp):=\{z\in X, d(x, z) < \vp, d(f_{\om_0}(x), f_{\om_0}(z) < \vp, \ldots, d(f_{\om_{n-1}\ldots \om_0}(x), f_{\om_{n-1}\ldots \om_0}(z)) < \vp\}\end{equation}
If $\om \in \Sigma_m^+$ denote $\om|_n$ the truncation to the first $n$ elements of $\om$, \ $\om|_n = (\om_0, \ldots, \om_{n-1})$.

For  $m \ge 1$ from above,  consider a continuous \textbf{multi-potential} on $X$, namely $$\Phi := (\phi_1, \ldots, \phi_m): X \to \mathbb R^m,$$  where $\phi_1, \ldots, \phi_m$ are  real-valued continuous potentials on $X$.

 If $(C, x) \in \Sigma_m^* \times X$, $C = (i_1, \ldots, i_n)$,  define the  \textbf{consecutive sum of $\Phi$ on $(C, x)$}, 
\begin{equation}\label{consum}
S_n\Phi(x, C):= \phi_{i_1}(x) + \phi_{i_2} (f_{i_1}(x)) + \ldots + \phi_{i_n}(f_{i_{n-1}\ldots i_1}(x))
\end{equation}
If $\om \in \Sigma_m^+$, then for any $n \ge 1$, denote the above consecutive sum also by $$S_n\Phi(x, \om) := S_n\Phi(x, (\om_0, \ldots \om_{n-1}))$$
Given now $m \ge 1$ and the above finite generator set $G_1$,  denote by $\mathcal X_m := \Sigma_m^* \times X$.

We  define now several types of pressure for $\Phi$,   using  Carath\'eodory-Pesin structures (\cite{Pe}):

\ \ \ \textbf{1. Amalgamated Pressure.}

Consider  a subset $Y \subset X$, and cover $Y$ with Bowen balls of type $B_n(y, C, G_1, \vp)$, for various points $y \in X$, integers $n$ and trajectories $C$.  
  For arbitrary $\lambda \in \mathbb R$, $\Phi \in \mathcal C(X, \mathbb R^m)$, $N$ a positive integer, and arbitrary $\vp >0$, define:
  \begin{equation}\label{M-}
  \begin{aligned}
   M(\lambda, \Phi, Y, G_1, N, \vp) &:= 
  \inf\{\mathop{\sum}\limits_{(C, y) \in \Gamma} \exp (-\lambda n(C) + S_{n(C)}\Phi(y, C)),  \ \Gamma \subset \mathcal X_m, \\ &Y \subset \mathop{\bigcup}\limits_{(C, y) \in \Gamma} B_{n(C)}(y, C, G_1, \vp), \ \text{and} 
   \ n(C) \ge N,   \forall  (C, y) \in \Gamma\}
  \end{aligned}
  \end{equation}
  When $Y = X$, write simply $M(\lambda, \Phi, G_1, N, \vp)$.
  If $N$ increases,  the set of collections $\Gamma$ becomes smaller, so the infimum over $\Gamma$ increases. Hence the following limit exists:
  \begin{equation}\label{ml}
  m(\lambda, \Phi, Y, G_1, \vp) := \mathop{\lim}\limits_{N \to \infty}M(\lambda, \Phi, Y, G_1, N, \vp)
  \end{equation}
  
 But  $X$ is compact and every map  $\phi_i, i = 1, \ldots, m$ is bounded on $X$, so for every $N \ge 1$ there must exist at least one cover of $X$ with Bowen balls of type $B_n(y, C, G_1, \vp)$ with $n > N$, and there exists $M>0$ so that, for any $(C, y) \in \mathcal X_m$, $|S_{n(C)}\Phi(y, C)| \le n(C)M$. Thus there exists $\lambda \in \mathbb R$ large so that $m(\lambda, \Phi, Y, G_1, \vp) = 0$.
Then denote,
 \begin{equation}\label{P-vp}
 P^A(\Phi, Y, G_1, \vp):= \inf\{\lambda, m(\lambda, \Phi, Y, G_1, \vp) = 0\}
 \end{equation}
 If $\vp>0$ decreases, then $m(\lambda, \Phi, Y, G_1, \vp)$ increases and $P^A(\Phi, Y, G_1, \vp)$ increases; so there exists $\mathop{\lim}\limits_{\vp \to 0} P^A(\Phi, Y, G_1, \vp)$. We thus define the following functional:
  
  \begin{definition}\label{invpres}
  For any set $Y \subset X$ and any multi-potential  $\Phi = (\phi_1, \ldots, \phi_m)\in \cal C(X, \bb R^m)$ define the  \textbf{amalgamated topological pressure} of $\Phi$ on $Y$, with respect to the semigroup $G$ of maps on $X$ generated by the finite set $G_1$, by $$P^A(\Phi, Y, G_1) := \mathop{\lim}\limits_{\vp \to 0} P^A(\Phi, Y, G_1, \vp)$$  
  The \textbf{amalgamated topological entropy} of $G_1$ on $Y$ is $h^A(Y, G_1) := P^A(\textbf{0}, Y, G_1)$.
 \newline 
  When $Y = X$,  write $P^A(\Phi, G_1)$.  If $G_1 = \{id_X, f\}$ and $\Phi = \phi \in \mathcal C(X, \mathbb R)$, then we obtain the usual topological pressure  $P(\phi, f)$.
 \end{definition}
 
 Let us form also the expressions,
 \begin{equation}\label{cn}
 C_n(\Phi, Y, G_1, \lambda, \vp) := \inf\{\mathop{\sum}\limits_{(x, \omega) \in \mathcal F} \exp(S_n\Phi(x, \omega) - n\lambda),  \  \mathcal F \subset X \times \Sigma_m^+,  Y \subset \mathop{\cup}\limits_{(x, \omega) \in \mathcal F} B_n(x, \omega, \vp)\}
 \end{equation}
 
 Then take $\u C(\Phi, Y, G_1, \lambda, \vp) := \liminf_{n\to \infty} C_n(\Phi, Y, G_1, \lambda, \vp)$, and $\o  C(\Phi, Y, G_1, \lambda, \vp) := \limsup_{n\to \infty} C_n(\Phi, Y, G_1, \lambda, \vp)$.
 We now define $$\u{CP}^A(\Phi, Y, G_1, \vp) := \inf\{\lambda, \u C(\Phi, Y, G_1, \lambda, \vp) = 0\} = \sup\{\lambda, \u C(\Phi, Y, G_1, \lambda, \vp) = \infty\}$$
 $$\o{CP}^A(\Phi, Y, G_1, \vp) :=  \inf\{\lambda, \o C(\Phi, Y, G_1, \lambda, \vp) = 0\} = \sup\{\lambda, \o C(\Phi, Y, G_1, \lambda, \vp) = \infty\}.$$

 Then, define the \textbf{lower amalgamated capacity pressure} and the \textbf{upper amalgamated capacity pressure} of the multi-potential $\Phi$ on the set $Y$ respectively, by:
 \begin{equation}\label{acp}
 \u{CP}^A(\Phi, Y, G_1) := \mathop{\lim}\limits_{\vp \to 0} \u{CP}^A(\Phi, Y, G_1, \vp),
 \ \o{CP}^A(\Phi,  Y, G_1) := \mathop{\lim}\limits_{\vp \to 0} \o{CP}^A(\Phi, Y, G_1, \vp).
\end{equation}

 We define now a notion of \textbf{amalgamated pressure along a set of trajectories}.
In the above setting, let a set of trajectories $\cal Y \subset \Sigma_m^+ \times X$, and a multi-potential $\Phi \in \mathcal C(X, \bb R^m)$.   
Then for any $N > 1$ and $\vp>0$, consider the expression:
$$
\begin{aligned}
 M(\lambda, \Phi, \cal Y &,  G_1, N,   \vp) := 
  \inf\{\mathop{\sum}\limits_{(\om, y) \in \Gamma} \exp (-\lambda n + S_{n}\Phi(y, \om)),  \ \Gamma \subset \cal Y, \  \Gamma \ \text{countable}, \\ 
  &\text{such that} \  \pi_2(\cal Y) \subset \mathop{\bigcup}\limits_{(\om, y) \in \Gamma} B_{n}(y, \om, \vp) \ \text{and}  \ n \ge N, \forall \ (\om, y) \in \Gamma\}
  \end{aligned}
$$
 As before, the following limit exists:
 $
 m(\lambda, \Phi, \cal Y, G_1, \vp) := \mathop{\lim}\limits_{N \to \infty} M(\lambda, \Phi, \cal Y, G_1, \vp)$.
 Next, define $P^A(\Phi, \cal Y, G_1, \vp) := \inf\{\lambda, m(\lambda, \Phi, \cal Y, G_1, \vp) = 0\}$.
 
 \begin{definition}\label{calydef}
 The \textbf{amalgamated pressure of $\Phi$ along trajectory set   $\cal Y$} is
 \begin{equation}\label{caly}
 P^A(\Phi, \cal Y, G_1) := \mathop{\lim}\limits_{\vp \to 0} P^A(\Phi, \cal Y, G_1, \vp).
 \end{equation}
Define  the \textbf{amalgamated entropy along $\cal Y$}, by $h^A(\cal Y, G_1):= P^A(\textbf{0}, \cal Y, G_1)$. When we want to emphasize also the projection set we write $h^A(\pi_2(\cal Y), \cal Y, G_1)$ for $h^A(\cal Y, G_1)$.
\end{definition}

 In general, the amalgamated pressure along a set of trajectories  \textbf{is not}  a Carath\'eodory-Pesin structure, since it does not satisfy the monotonicity property, see Remark \ref{calyne}.
 
The \textbf{upper capacity} $\o{CP}^A(\Phi, \cal Y, G_1)$, and the \textbf{lower capacity} $\u{CP}^A(\Phi, \cal Y, G_1)$, of the multi-potential $\Phi$ along the set of trajectories $\mathcal Y\subset \Sigma_m^+\times X$, are defined as before.

 \
 
 \ \ \textbf{2. Condensed Pressure.}

 Define the $n$-\textbf{condensed Bowen ball} centered at $x \in X$ of radius $\vp>0$, with respect to the semigroup $G$ generated by $G_1$ finite, by
\begin{equation}\label{cond}
B_n(x,  G_1, \vp) := \mathop{\bigcap}\limits_{\om \in \Sigma_m^+} B_n(x, \om, \vp)
\end{equation}
 Note that since $G_1$ is finite, then $B_n(x, G_1, \vp)$ is a neighbourhood of $x$.
 
Define the \textbf{lower/upper consecutive sum} of the multi-potential $\Phi$ at $x\in X$ by, 
 \begin{equation}\label{lcs}
 s_n\Phi(x) := \inf\{S_n\Phi(x, \om), \om \in \Sigma_m^+\}, \  \ 
 S_n\Phi(x) := \sup\{S_n\Phi(x, \om), \om \in \Sigma_m^+ \},
 \end{equation}
 and when we want to emphasize dependence on $G_1$ we write also $s_n\Phi(x, G_1)$, respectively $S_n\Phi(x, G_1)$.
 Then, for a subset $Y \subset X$ and any integer $N\ge 1$ define, $$M_{l, N}(\Phi, Y, G_1, \lambda,  \vp) := \inf\{\mathop{\sum}\limits_{x \in \mathcal F} \exp(s_n\Phi(x) - \lambda n), Y \subset \mathop{\cup}\limits_{x\in \mathcal F} B_n(x, G_1, \vp), n \ge N\},$$
 $$M_{u, N}(\Phi, Y, G_1, \lambda, \vp) :=  \inf\{\mathop{\sum}\limits_{x \in \mathcal F} \exp(S_n\Phi(x) - \lambda n), Y \subset \mathop{\cup}\limits_{x\in \mathcal F} B_n(x, G_1, \vp), n \ge N\}$$

When $N$ increases, the collection of coverings of $Y$ decreases, thus the infimum in the expressions above increases, hence there exist the limits
$$m_l(\Phi, Y, G_1, \lambda, \vp) := \mathop{\lim}\limits_{N \to \infty}M_{l, N}(\Phi, Y, G_1, \lambda, \vp), \  \ 
m_u(\Phi, Y, G_1, \lambda, \vp) := \mathop{\lim}\limits_{N \to \infty} M_{u, N}(\Phi, Y, G_1, \lambda, \vp)$$

 Then as usual with a Carath\'eodory-Pesin structure (see \cite{Pe}, \cite{PP}), we take:
 $$P_l(\Phi, Y, G_1, \vp) := \inf\{\lambda, m_l(\Phi, Y, G_1, \vp) = 0 \} = \sup\{\lambda, m_l(\Phi, Y, G_1, \lambda, \vp) = \infty \},$$
 $$P_u(\Phi, Y, G_1, \vp) := \inf\{\lambda, m_u(\Phi, Y, G_1, \vp) = 0\} = \sup \{\lambda, m_u(\Phi, Y, G_1, \lambda, \vp) = \infty\}$$
 
 Since the generating set $G_1$ is finite, and the potentials $\phi_j$ are continuous on $X$, we know that their maximum oscillation converges to 0, when $\vp \to 0$. Hence the following limits exist, and we define the \textbf{lower condensed pressure} and the \textbf{upper condensed presure} of the multi-potential $\Phi$ on $Y$ respectively, by
 \begin{equation}\label{condp}
 P_l(\Phi, Y, G_1) := \mathop{\lim}\limits_{\vp \to 0} P_l(\Phi, Y, G_1, \vp), \ 
 P_u(\Phi, Y, G_1) := \mathop{\lim}\limits_{\vp \to 0} P_u(\Phi, Y, G_1, \vp)
\end{equation} 
 
 We notice that a notion of topological entropy for semigroups, defined with  sets of type $B_n(x, G_1, \vp)$ was studied in  \cite{Bi} and \cite{HS}.
 
By covering with balls along trajectories of the same length,  define now as above the \textbf{lower/upper capacity pressures} for the lower condensed pressure, respectively $\u{CP}_l(\Phi, Y, G_1)$ and $ \o{CP}_l(\Phi, Y, G_1)$, and the \textbf{lower/upper capacity for the upper condensed pressure}, namely $\u{CP}_u(\Phi, Y, G_1)$ and $\o{CP}_u(\Phi, Y, G_1)$.
 %

 \
 
 \ \ \textbf{3. Trajectory Pressure.}

 Trajectory pressure is obtained  if we fix  $\om \in \Sigma_m^+, \om = (\om_0, \om_1, \ldots)$, and iterate only along $\om$. It is a particular case of the pressure along a set of trajectories in (\ref{caly}).
Indeed if $Y \subset X$ and $\om \in \Sigma_m^+$, then the \textbf{trajectory pressure on $Y$ along  $\om$} denoted by $P(\Phi, Y, G_1, \om)$, is equal to $P^A(\Phi, \cal Y, G_1)$, with $\cal Y = \{\om\} \times Y$. 

  
 \
 
 \ \ \textbf{4. Exhaustive Pressure.}
 
 Another type of pressure is obtained by covering $Y \subset X$ with sets of type $$B_n^+(x, G_1, \vp) := \mathop{\bigcup}\limits_{|\om| = n} B_n(x, \om, \vp),$$
 called the \textbf{exhaustive $n$-Bowen ball} centered at $x$ and of radius $\vp$. So $B_n^+(x, G_1, \vp)$ is the union of $(n, \vp)$-Bowen balls at $x$ over all possible trajectories of length $n$. Recall from (\ref{lcs}) that $S_n\Phi(x) := \sup\{S_n\Phi(x, \om), \om \in \Sigma_m^+\}, x \in X, n \ge 1$.

 As before if $Y \subset X$, $N>0$, $\lambda \in \bb R$, we form the expression,
 $$M_u^+(\Phi, Y, G_1, \lambda, \vp, N):= \inf\{\mathop{\sum}\limits_{x\in \mathcal F} \exp(S_n\Phi(x) - \lambda n), \ Y \subset \mathop{\cup}\limits_{x \in \mathcal F} B_n^+(x, G_1, \vp), n \ge N\}.$$
 
 Similarly, recall from (\ref{lcs}) that $s_n\Phi(x) := \inf\{S_n\Phi(x, \om), \om \in \Sigma_m^+\}$, and define:
 $$M_l^+(\Phi, Y, G_1, \lambda, N, \vp) := \inf\{\mathop{\sum}\limits_{x\in \mathcal F} \exp(s_n\Phi(x, G_1) - \lambda n), \ Y \subset \mathop{\cup}\limits_{x \in \mathcal F} B_n^+(x, G_1, \vp), n \ge N\}.$$

 Then we obtain $m_u^+(\Phi, Y, G_1, \lambda, \vp) := \mathop{\lim}\limits_{N \to \infty} M^+_u(\Phi, Y, G_1, \lambda, N, \vp)$, and $m_l^+(\Phi, Y, G_1, \lambda, \vp) :=  \mathop{\lim}\limits_{N \to \infty} M^+_l(\Phi, Y, G_1, \lambda, N, \vp)$.
 This defines the following: $$P^+_u(\Phi, Y, G_1, \vp) = \inf\{\lambda, m^+_u(\Phi, Y, G_1, \lambda, \vp) = 0\},  \ P_l^+(\Phi, Y, G_1, \vp) = \inf\{\lambda, m^+_u(\Phi, Y, G_1, \lambda, \vp) = 0\}$$
 
From above the following limits exist when $\vp \to 0$ and  define the \textbf{upper exhaustive pressure}, and  \textbf{lower exhaustive pressure} of $\Phi$ on $Y$ respectively, 
 \begin{equation}\label{exh}
 P^+_u(\Phi, Y, G_1) := \mathop{\lim}\limits_{\vp \to 0} P^+_u(\Phi, Y, G_1, \vp), \ 
 P^+_l(\Phi, Y, G_1) := \mathop{\lim}\limits_{\vp \to 0} P^+_l(\Phi, Y, G_1, \vp).
 \end{equation}
 
\
 
 \section{Properties and comparisons of pressures.}
 
 In this Section we will explore properties of the various types of pressure for multi-potentials from Section 2, and will compare between them. Moreover, we will provide examples showing that in general these notions of pressure are different.

 \begin{theorem}\label{amal}
 
 Consider a compact metric space $X$, a semigroup $G$ of continuous self-maps of $X$ generated by the finite set $G_1$, and let $\Phi \in \mathcal C(X, \bb R^m)$ be a continuous multi-potential on $X$, and $Y$ be a subset of $X$. Then the amalgamated pressure satisfies the following properties:
 
\  a) If $Z_1 \subset Z_2$, then $P^A(\Phi, Z_1, G_1) \le P^A(\Phi, Z_2, G_1)$.

\ b) Let $Z_j \subset X, j \ge 1$ and $Z:= \mathop{\cup}\limits_{j \ge 1} Z_j$.  Then $P^A(\Phi, Z, G_1) = \mathop{\sup}\limits_{j \ge 1} P^A(\Phi, Z_j, G_1)$.

\ c) If $g:X \to Z$ is a homeomorphism of two compact metric spaces, and $Y \subset X$, then $P^A(\Phi, Y, G_1) = P^A(\Phi\circ g^{-1}, g(Y), g\circ G_1\circ g^{-1})$, where $\Phi\circ g^{-1} = (\phi_1\circ g^{-1}, \ldots, \phi_m\circ g^{-1})$ and $g\circ G_1 \circ g^{-1} = \{g\circ \kappa \circ g^{-1}, \kappa \in G_1\}$.
 
 \ d) If $Z_1 \subset Z_2$, then $\u{CP}^A(\Phi, Z_1, G_1) \le \u{CP}^A(\Phi, Z_2, G_1)$, and $\o{CP}^A(\Phi, Z_1, G_1) \le \o{CP}^A(\Phi, Z_2, G_1)$.
 
 \ e) If $Z_j, j \ge 1$ are subsets of $X$ and $Z:= \mathop{\cup}\limits_{j \ge 1} Z_j$, then $\u{CP}^A(\Phi, Z, G_1) \ge \sup_{j\ge 1} \u{CP}^A(\Phi, Z_j, G_1)$, and $\o{CP}^A(\Phi, Z, G_1) \ge \sup_{j \ge 1} \o{CP}^A(\Phi, Z_j, G_1)$. 
 
 \ f) If $g:X \to X$ is a homeomorphism which commutes with every $f_i$ from $G_1$, then $\u{CP}^A(\Phi, g(Y), G_1) = \u{CP}^A(\Phi\circ g, Y, G_1)$, and $\o{CP}^A(\Phi, g(Y), G_1) = \o{CP}^A(\Phi\circ g, Y, G_1)$.
 
 \ g)  $P^A(\Phi, Y, G_1) \le \u{CP}^A(\Phi, Y, G_1) \le \o{CP}^A(\Phi, Y, G_1)$.
 \end{theorem}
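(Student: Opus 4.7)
Every part reduces to direct manipulation of the admissible-cover family
\[
\bigl\{\Gamma \subset \mathcal X_m : Y \subset \bigcup_{(C,y)\in\Gamma} B_{n(C)}(y, C, G_1, \vp),\ n(C) \ge N\bigr\}
\]
that defines $M(\lambda, \Phi, Y, G_1, N, \vp)$, followed by the monotone limits $N \to \infty$ and $\vp \to 0$. For (a) and (d), any admissible cover of $Z_2$ is automatically an admissible cover of $Z_1$, so the infima $M(\lambda, \Phi, \cdot, G_1, N, \vp)$ and $C_n(\Phi, \cdot, G_1, \lambda, \vp)$ are monotone in the set argument; monotonicity of bounded functions is preserved by the $\lim_N$, $\liminf_n$, $\limsup_n$, and $\lim_\vp$ operations that produce $P^A$, $\u{CP}^A$, and $\o{CP}^A$.

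\textbf{Countable behaviour, (b) and (e).} The $\ge$ direction in (b) is (a). For $\le$, fix $\vp > 0$ and $\lambda > s := \mathop{\sup}\limits_{j \ge 1} P^A(\Phi, Z_j, G_1)$, so $m(\lambda, \Phi, Z_j, G_1, \vp) = 0$ for every $j$. Given $\epsilon > 0$ and $N$, pick for each $j$ a countable $\Gamma_j \subset \mathcal X_m$ with $n(C) \ge N$ on $\Gamma_j$, covering $Z_j$, whose weighted sum is below $\epsilon 2^{-j}$. Then $\Gamma := \bigsqcup_j \Gamma_j$ still satisfies $n(C) \ge N$, covers $Z$, and has total sum at most $\epsilon$, so $M(\lambda, \Phi, Z, G_1, N, \vp) \le \epsilon$ and $m(\lambda, \Phi, Z, G_1, \vp) = 0$; thus $P^A(\Phi, Z, G_1, \vp) \le \lambda$, and letting $\vp \to 0$ and $\lambda \downarrow s$ concludes. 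The same construction fails for (e) because the expressions $C_n$ require every Bowen ball in the cover to share a single common length $n$, so unioning per-$j$ covers at that fixed $n$ with sums $\epsilon_j$ cannot force $\sum_j \epsilon_j$ to be small; only the one-sided bound inherited from (d) survives.

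\textbf{Conjugation invariance, (c) and (f).} A homeomorphism $g$ induces the bijection $(C, y) \mapsto (C, g(y))$ on $\mathcal X_m$. In (c), the identity $f_{i_n \ldots i_1}(y) = g^{-1}\bigl((g f_{i_n} g^{-1}) \circ \cdots \circ (g f_{i_1} g^{-1})\bigr)(g(y))$ yields $S_n \Phi(y, C) = S_n(\Phi \circ g^{-1})(g(y), C)$ for the conjugated generator set $g G_1 g^{-1}$; uniform continuity of $g$ and $g^{-1}$ on compact $X$ produces a modulus $\vp'(\vp) \to 0$ as $\vp \to 0$ with $g\bigl(B_n(y, C, G_1, \vp)\bigr) \subset B_n(g(y), C, g G_1 g^{-1}, \vp')$, and the reverse inclusion via $g^{-1}$. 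Both inclusions preserve the weighted sums, so the defining infima agree in the limit $\vp \to 0$. Part (f) is analogous: commutativity gives $g G_1 g^{-1} = G_1$ and $S_n(\Phi \circ g)(y, \om) = S_n \Phi(g(y), \om)$, and uniform continuity handles the rigid single-length Bowen balls of the capacity constructions. The sole delicate point is keeping $\vp'(\vp)$ compatible with both the $N \to \infty$ and $\vp \to 0$ limits; compactness of $X$ makes this routine.

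\textbf{Comparison, (g).} Any family $\mathcal F \subset X \times \Sigma_m^+$ used in $C_n(\Phi, Y, G_1, \lambda, \vp)$ corresponds to a family $\Gamma \subset \mathcal X_m$ consisting of the truncations $(\om|_n, x)$, on which $n(C) = n$ and the weighted sum equals that of $\mathcal F$ since $B_n(x, \om, \vp)$ depends on $\om$ only through $\om|_n$. For $n \ge N$, this $\Gamma$ is admissible in the definition of $M(\lambda, \Phi, Y, G_1, N, \vp)$, so $M(\lambda, \Phi, Y, G_1, N, \vp) \le C_n(\Phi, Y, G_1, \lambda, \vp)$ for every $n \ge N$; passing to $\liminf_n$ gives $M(\lambda, \Phi, Y, G_1, N, \vp) \le \u C(\Phi, Y, G_1, \lambda, \vp)$, and then $N \to \infty$ yields $m(\lambda, \Phi, Y, G_1, \vp) \le \u C(\Phi, Y, G_1, \lambda, \vp)$. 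Hence $\u C = 0$ forces $m = 0$, so $P^A(\Phi, Y, G_1, \vp) \le \u{CP}^A(\Phi, Y, G_1, \vp)$; taking $\vp \to 0$ gives the first inequality of (g), and $\u{CP}^A \le \o{CP}^A$ is immediate from $\liminf \le \limsup$.
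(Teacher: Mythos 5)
Your proposal is correct and follows essentially the same route as the paper: part (b) is the identical $\epsilon/2^j$ union-of-covers argument, part (g) is the same observation that single-length covers form a subfamily of the variable-length covers defining $P^A$, and the remaining parts are the standard monotonicity/conjugation manipulations that the paper declares straightforward. Your write-up merely supplies the details (uniform-continuity modulus in (c), (f), truncation $\om|_n$ in (g)) that the paper omits.
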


 \begin{proof}
 The proof for a), c), d), f) is straightforward. For b), it is clear from a) that $P^A(\Phi, Z, G_1) \ge P^A(\Phi, Z_j, G_1)$ for all $j \ge 1$. Let now $\lambda > \sup_jP^A(\Phi, Z_j, G_1)$. Then $m(\Phi, Z_j, G_1, \lambda, \vp) = 0, j \ge 1$. Hence for all $N \ge 1, \vp>0$ and all $j \ge 1$, $M_N(\Phi, Z_j, G_1, \lambda, \vp) = 0$. 
 This implies that for all $j \ge 1$ there exists a family $\mathcal F_j \in \mathcal X_m$ such that $Z_j \subset \mathop{\cup}\limits_{(C, x) \in \mathcal F_j} B_{n(C)}(x, C, \vp)$ and for any $(C, x) \in \mathcal F_j$ we have $n(C) >N$ and, $$\mathop{\sum}\limits_{(C, x) \in \mathcal F_j} \exp(S_{n(C)}\Phi(x, C) - n(C)\lambda) < \frac{\vp}{2^j}$$
 Consider  $\mathcal F := \mathop{\bigcup}\limits_{j \ge 1} \mathcal F_j$. Then $Z \subset \mathop{\bigcup}\limits_{(C, x) \in \mathcal F} B_{n(C)}(x, C, \vp)$, and 
 $
 \mathop{\sum}\limits_{(C, x) \in \mathcal F} \exp(S_{n(C)}\Phi(x, C) - n(C)\lambda) < \mathop{\sum}\limits_{j \ge 1} \frac{\vp}{2^j} = \vp$.
Thus $P^A(\Phi, Z, G_1) \le \lambda$, and $P^A(\Phi, Z, G_1) = \sup_{j \ge 1} P^A(\Phi, Z_j, G_1)$.

For g) notice that for any $N$ fixed, the capacity pressures use only balls of type $B_N(x, \om, \vp)$, while $P^A$ uses balls $B_{n_i}(y, \eta, \vp)$,  $n_i \ge N$. 
\end{proof}

\begin{remark}\label{calyne}
Regarding the properties of the amalgamated pressure over sets of trajectories,  if $Z \subset X$ and $A_1 \subset A_2 \subset \Sigma_m^+$, then $$P^A(\Phi, Z\times A_1, G_1) \ge P^A(\Phi, Z \times A_2, G_1)$$
And if $Z_1 \subset Z_2 \subset X$ and $A \subset \Sigma_m^+$, then $P^A(\Phi, Z_1\times A, G_1) \le P^A(\Phi, Z_2\times A, G_1)$.

However if $\cal Y_1 \subset \cal Y_2 \subset \Sigma_m^+\times X$, then the quantities $P^A(\Phi, \cal Y_1, G_1)$ and $P^A(\Phi, \cal Y_2, G_1)$ are incomparable in general. This is because, even if $\pi_2(\cal Y_1) \subset \pi_2(\cal Y_2) \subset X$, in definition of $P^A$ 
we take infimum over a smaller set $\pi_1(\cal Y_1) \subset \pi_1(\cal Y_2)$. 
Thus  if $\cal Y \subset \Sigma_m^+ \times X$, it does not follow in general that $P^A(\Phi, G_1) \ge P^A(\Phi, \cal Y, G_1)$. 
\end{remark}

 \begin{theorem}\label{oscpressure}

Consider continuous multi-potentials $\Phi, \Psi \in \mathcal C(X, \bb R^m)$, and $Y$ a subset of a compact metric space $X$. Then,

\ a) $|P^A(\Phi, Y, G_1) - P^A(\Psi, Y, G_1)| \le ||\Phi - \Psi||:= \mathop{\sup}\limits_{1 \le j \le m}||\phi_j - \psi_j||.$

\ b) $|\u{CP}^A(\Phi, Y, G_1) - \u{CP}^A(\Psi, Y, G_1)| \le \mathop{\sup}\limits_{1 \le j \le m}||\phi_j - \psi_j||$.

\ c) $|\o{CP}^A(\Phi, Y, G_1) - \o{CP}^A(\Psi, Y, G_1) | \le \mathop{\sup}\limits_{1 \le j \le m}||\phi_j - \psi_j||.$
\end{theorem}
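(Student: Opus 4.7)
The plan is to observe that for any $(C, y) \in \mathcal X_m$ with $C = (i_1, \ldots, i_n)$, the consecutive sums satisfy
\begin{equation*}
|S_n\Phi(y, C) - S_n\Psi(y, C)| \le \sum_{k=1}^n \|\phi_{i_k} - \psi_{i_k}\|_\infty \le n \|\Phi - \Psi\|,
\end{equation*}
simply by expanding definition (\ref{consum}) and applying the triangle inequality. Setting $\delta := \|\Phi - \Psi\|$, this yields the pointwise inequality $S_n\Psi(y, C) - n\delta \le S_n\Phi(y, C) \le S_n\Psi(y, C) + n\delta$, which is the key mechanism behind all three parts.

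For part a), I would fix $\vp > 0$, $N \ge 1$, $\lambda \in \mathbb R$, and note that since the Bowen balls $B_{n(C)}(y, C, G_1, \vp)$ in (\ref{M-}) do not depend on the multi-potential, any admissible cover $\Gamma$ for $\Phi$ is admissible for $\Psi$ and conversely. The exponential bound above then gives, term-by-term,
\begin{equation*}
\exp(-\lambda n(C) + S_{n(C)}\Phi(y, C)) \le \exp(-(\lambda - \delta)\, n(C) + S_{n(C)}\Psi(y, C)),
\end{equation*}
so $M(\lambda, \Phi, Y, G_1, N, \vp) \le M(\lambda - \delta, \Psi, Y, G_1, N, \vp)$. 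Passing to the limit $N \to \infty$ via (\ref{ml}) gives $m(\lambda, \Phi, Y, G_1, \vp) \le m(\lambda - \delta, \Psi, Y, G_1, \vp)$. Hence if $\lambda - \delta > P^A(\Psi, Y, G_1, \vp)$, the right side vanishes and so does the left, forcing $P^A(\Phi, Y, G_1, \vp) \le \lambda$. Taking infimum over such $\lambda$ and letting $\vp \to 0$ in (\ref{P-vp}) gives $P^A(\Phi, Y, G_1) \le P^A(\Psi, Y, G_1) + \delta$; swapping the roles of $\Phi$ and $\Psi$ yields the reverse, completing a).

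For parts b) and c), I would repeat the same reasoning with the expressions $C_n(\Phi, Y, G_1, \lambda, \vp)$ from (\ref{cn}) in place of $M(\lambda, \Phi, Y, G_1, N, \vp)$. The covers in that definition again involve only Bowen balls $B_n(x, \omega, \vp)$, which are potential-independent, so the same exponential shift by $n\delta$ produces the inequality $C_n(\Phi, Y, G_1, \lambda, \vp) \le C_n(\Psi, Y, G_1, \lambda - \delta, \vp)$ for every $n$. Taking $\liminf$ respectively $\limsup$ in $n$ preserves the inequality, and then the same argument on the $\inf\{\lambda : \cdot = 0\}$ characterization of $\underline{CP}^A$ and $\overline{CP}^A$, followed by $\vp \to 0$ in (\ref{acp}), delivers b) and c) after a final symmetry step.

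There is no genuine obstacle here: the argument is the standard modulus-of-continuity computation for Carath\'eodory--Pesin pressures, and all that needs verification is that the covers entering $M$ and $C_n$ are independent of the multi-potential, which is immediate from (\ref{M-}) and (\ref{cn}). The uniformity constant $\|\Phi - \Psi\| = \sup_{1 \le j \le m} \|\phi_j - \psi_j\|_\infty$ arises naturally because in $S_n\Phi(y, C)$ each coordinate $\phi_{i_k}$ appears once per step, so only the worst coordinate contributes to the bound.
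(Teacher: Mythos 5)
Your proposal is correct and follows essentially the same route as the paper: the paper's proof is exactly the observation that $|S_{n(C)}\Phi(x,C)-S_{n(C)}\Psi(x,C)|\le n(C)\,||\Phi-\Psi||$ for any ball in a covering family, combined with the definitions of $P^A$ and the capacities; you have simply written out the resulting shift $\lambda\mapsto\lambda-\delta$ and the limiting steps in full detail. Nothing further is needed.
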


 \begin{proof}
 For a), notice that if $\mathcal F$ is a family of balls that cover $Y$, then for any $(C, x) \in \mathcal F$,  $|S_{n(C)}\Phi(x, C) - S_{n(C)}\Psi(x, C)| \le n(C) ||\Phi - \Psi||$. From definition of  $P^A$ we obtain the inequality. 
 The other two inequalities for the capacity pressure follow similarly.
 \end{proof}
 
 Let us also remark that the same notions of amalgamated pressure and lower/upper capacity amalgamated pressure are obtained if we cover with arbitrary open sets. Namely, let $\mathcal U $ be a finite open cover of the compact metric space $X$, and for $C = (j_1, \ldots, j_{n}) \in \Sigma_m^*$ and a string $\bf U = (U_{i_0}, U_{i_1}, \ldots, U_{i_{n-1}}) \in \mathcal U^n$ denote by  
 \begin{equation}\label{xu}
 X(\bf U, C) = \{x \in X, x \in U_{i_0}, f_{i_1}(x) \in U_{i_1}, \ldots, f_{i_n\ldots i_1}(x) \in U_{i_n}\}
 \end{equation}
 Then we can define as in \cite{Pe} the expressions of type
 \begin{equation}\label{mxu}
 M_N(\Phi, Y, G_1, \lambda, \cal U) := \inf\{\mathop{\sum}\limits_{(\bf U, C) \in \cal G}\exp(\mathop{\sup}\limits_{y \in X(\bf U, C)} S_{n(C)}\Phi(y, C) - \lambda n)\},
 \end{equation}
 where the infimum is taken over all collections $\mathcal G$  which cover $Y$ with sets of type $X(\bf U, C)$, with $n(C) \ge N$. Using this, define as before $P^A(\Phi, Y, G_1, \mathcal U)$. Denote also 
 \begin{equation}\label{supS}
 S_{n(C)}(\bf U, C) := \mathop{\sup}\limits_{y \in X(\bf U, C)} S_{n(C)}\Phi(y, C)
 \end{equation}
 Define as before $\u{CP}^A(\Phi, Y, G_1, \mathcal U)$ and $\o{CP}^A(\Phi, Y, G_1, \mathcal U)$.
 As above we obtain:
 
 \begin{proposition}\label{proxu}
 In the above setting, 
 $P^A(\Phi, Y, G_1) = \mathop{\lim}\limits_{|\mathcal U| \to 0} P^A(\Phi, Y, G_1, \mathcal U),$
 $$\u{CP}^A(\Phi, Y, G_1) = \mathop{\lim}\limits_{|\cal U| \to 0}\u{CP}^A(\Phi, Y, G_1, \mathcal U), \ 
 \o{CP}^A(\Phi, Y, G_1) = \mathop{\lim}\limits_{|\cal U| \to 0}\o{CP}^A(\Phi, Y, G_1, \mathcal U).$$
  \end{proposition}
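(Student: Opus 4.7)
The plan is to sandwich $P^A(\Phi, Y, G_1, \mathcal U)$ between two copies of $P^A(\Phi, Y, G_1, \vp)$ whose parameters $\vp$ are both driven to $0$ as the mesh $|\mathcal U| \to 0$. The geometric input is a pair of inclusions between dynamical cylinders $X(\bf U, C)$ and Bowen balls $B_{n(C)}(y, C, G_1, \vp)$; the analytic input is that, since $G_1$ is finite and each coordinate $\phi_j$ is continuous on the compact space $X$, the common modulus
$$\eta(\vp) := \max_{1 \le j \le m} \sup\{|\phi_j(z) - \phi_j(y)| : d(z, y) < \vp\}$$
tends to $0$ as $\vp \to 0$, which is what allows the error between the two weighting schemes to disappear in the limit.

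First I would handle the easy direction. If $|\mathcal U| < \vp$ and $y \in X(\bf U, C)$, then every other $z \in X(\bf U, C)$ lies in the same cover element $U_{i_k}$ as $y$ at the $k$-th iterate, so $X(\bf U, C) \subseteq B_{n(C)}(y, C, G_1, \vp)$. Choosing such a reference point in each cylinder of an admissible cover of $Y$ and using $\sup_{z \in X(\bf U, C)} S_{n(C)}\Phi(z, C) \ge S_{n(C)}\Phi(y, C)$ in the definition (\ref{mxu}), I obtain $M_N(\Phi, Y, G_1, \lambda, \mathcal U) \ge M(\lambda, \Phi, Y, G_1, N, \vp)$, hence $P^A(\Phi, Y, G_1, \mathcal U) \ge P^A(\Phi, Y, G_1, \vp)$.

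For the reverse direction I would use a Lebesgue number argument: pick $\mathcal U$ with Lebesgue number $\alpha > 2\vp$, so that for every $y \in X$ and each $k \ge 0$ the ball $B(f_{i_k \cdots i_1}(y), \vp)$ is contained in some $U_{i_k} \in \mathcal U$. The string $\bf U = (U_{i_0}, \ldots, U_{i_{n-1}})$ built this way satisfies $B_{n(C)}(y, C, G_1, \vp) \subseteq X(\bf U, C)$, and since the dynamical diameter of $X(\bf U, C)$ is at most $|\mathcal U|$ in each coordinate, the oscillation bound $\sup_{X(\bf U, C)} S_{n(C)}\Phi(\cdot, C) \le S_{n(C)}\Phi(y, C) + n(C)\, \eta(|\mathcal U|)$ holds. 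Converting a Bowen ball cover of $Y$ into a cylinder cover in this manner gives
$$M_N(\Phi, Y, G_1, \lambda, \mathcal U) \le M(\lambda - \eta(|\mathcal U|), \Phi, Y, G_1, N, \vp),$$
whence $P^A(\Phi, Y, G_1, \mathcal U) \le P^A(\Phi, Y, G_1, \vp) + \eta(|\mathcal U|)$. Running $|\mathcal U| \to 0$ along a sequence of covers with $\alpha > 2\vp \to 0$ (for instance, refinements of covers by balls of radius $3\vp$) squeezes $P^A(\Phi, Y, G_1, \mathcal U)$ to $\lim_{\vp \to 0} P^A(\Phi, Y, G_1, \vp) = P^A(\Phi, Y, G_1)$.

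The capacity statements are handled by the same double sandwich at a single scale $n$: in (\ref{cn}) the infimum over $n \ge N$ is replaced by $C_n$ with a fixed $n$, the geometric inclusions above are applied in exactly the same way, and one then passes to $\liminf_n$ or $\limsup_n$ before taking $|\mathcal U| \to 0$. The main point I expect to be slightly delicate is tracking that the oscillation correction $n\, \eta(|\mathcal U|)$ in the cylinder weight is an additive shift of $\lambda$ by the $n$-independent quantity $\eta(|\mathcal U|)$, so that it vanishes as the mesh shrinks uniformly in $n$; this is precisely where the finiteness of the generating set $G_1$ and the continuity of $\Phi$ on the compact space $X$ are essential.
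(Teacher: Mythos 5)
Your proof is correct and follows the standard Carath\'eodory--Pesin argument that the paper implicitly invokes (the paper gives no proof, only the phrase ``As above we obtain''). The two-sided comparison — using $|\mathcal U|<\vp$ to embed each cylinder $X(\mathbf U, C)$ inside a Bowen ball $B_{n(C)}(y, C, G_1,\vp)$ for one direction, and the Lebesgue number to embed each Bowen ball inside a cylinder for the other, with the potential oscillation absorbed by the additive shift $\lambda\mapsto\lambda-\eta(|\mathcal U|)$ in the Carath\'eodory parameter — is exactly the intended mechanism, and your tracking of the $n$-independence of $\eta(|\mathcal U|)$ is the right thing to emphasize. One small point worth tightening: the squeeze at the end should be stated for an arbitrary cover $\mathcal U$ (taking $\vp=\alpha(\mathcal U)/2$ where $\alpha(\mathcal U)$ is its Lebesgue number, and using the monotonicity of $\vp\mapsto P^A(\Phi,Y,G_1,\vp)$ to bound $P^A(\Phi,Y,G_1,\alpha(\mathcal U)/2)\le P^A(\Phi,Y,G_1)$), rather than only ``along a sequence'' of special ball covers, since the statement requires control over all covers as $|\mathcal U|\to 0$; your inequalities do apply to arbitrary $\mathcal U$, so this is a matter of phrasing rather than a gap.
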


 \begin{definition}\label{iset}
 A subset $Y \subset X$ is called $G_1$-invariant if $Y$ is an $f_j$-invariant set for all $1 \le j \le m$.
 \end{definition}

 \begin{theorem}\label{invset}
 If $Y$ is a $G_1$-invariant set, then $\u{CP}^A(\Phi, Y, G_1) = \o{CP}^A(\Phi, Y, G_1)$. 
 
Also if $Y$ is compact $G_1$-invariant, then $P^A(\Phi, Y, G_1) = \u{CP}^A(\Phi, Y, G_1) = \o{CP}^A(\Phi, Y, G_1)$. 
\end{theorem}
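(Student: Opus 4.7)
The plan is to use the inequality $P^A(\Phi, Y, G_1) \le \u{CP}^A(\Phi, Y, G_1) \le \o{CP}^A(\Phi, Y, G_1)$ from Theorem \ref{amal}(g) to reduce the claim to showing two reverse inequalities: (A) $\o{CP}^A(\Phi, Y, G_1) \le \u{CP}^A(\Phi, Y, G_1)$ under the standing assumption that $Y$ is $G_1$-invariant, and (B) $\o{CP}^A(\Phi, Y, G_1) \le P^A(\Phi, Y, G_1)$ under the additional assumption that $Y$ is compact.

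The central technical tool I would develop is an approximate submultiplicativity for the partition function
$$Z_n(\lambda, \vp) := \inf\Bigl\{\sum_{(x,\omega)\in\mathcal F}\exp(S_n\Phi(x,\omega) - \lambda n) : Y \subset \bigcup_{(x,\omega)\in\mathcal F} B_n(x,\omega,\vp)\Bigr\}$$
appearing in (\ref{cn}), of the form $Z_{n+k}(\lambda, c\vp) \le e^{k\gamma(\vp)}\, Z_n(\lambda, \vp)\, Z_k(\lambda, \vp)$ for an absolute constant $c$, where $\gamma(\vp) := \max_{1\le j \le m} \sup\{|\phi_j(u)-\phi_j(v)| : d(u,v)<\vp\}$ tends to $0$ as $\vp \to 0$ by uniform continuity of the $\phi_j$ on compact $X$. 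To establish this I would take near-optimal covers $\mathcal F_n, \mathcal F_k$ at their respective levels and radius $\vp$, relocate centers into $Y$ (a harmless perturbation that enlarges the radius by a factor of $2$ and shifts each consecutive sum by at most $\gamma(\vp)$ per iterate), and then for each $(x,\omega)\in\mathcal F_n$ invoke $G_1$-invariance to obtain $f_{\omega_{n-1}\ldots\omega_0}(x)\in Y$, hence a compatible $(x',\omega')\in\mathcal F_k$ with $f_\omega(x)\in B_k(x',\omega',\vp)$. Including $(x,\omega\omega')$ in the composed family for every such compatible pair, a triangle-inequality check gives
$$B_n(x,\omega,\vp)\cap f_\omega^{-1}\bigl(B_k(x',\omega',\vp)\bigr) \subset B_{n+k}(x,\omega\omega',2\vp),$$
and the decomposition $S_{n+k}\Phi(x,\omega\omega') = S_n\Phi(x,\omega) + S_k\Phi(f_\omega(x),\omega')$ combined with $|S_k\Phi(f_\omega(x),\omega')-S_k\Phi(x',\omega')|\le k\gamma(\vp)$ yields the bound on the total weight.

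From this submultiplicativity, a Fekete-type argument applied to $\log Z_n(\lambda,\vp)$ together with monotonicity of $Z_n$ in the radius shows $\limsup_n \frac{1}{n}\log Z_n(\lambda, c\vp) \le \liminf_n \frac{1}{n}\log Z_n(\lambda,\vp) + \gamma(\vp)$, which translates to $\o{CP}^A(\Phi, Y, G_1, c\vp) \le \u{CP}^A(\Phi, Y, G_1, \vp) + \gamma(\vp)$; sending $\vp\to 0$ establishes (A). For (B), compactness of $Y$ allows a variable-length cover $\Gamma$ witnessing $P^A(\Phi, Y, G_1, \vp)$ to be thinned to a finite subcover with trajectory lengths bounded above by some $L$; padding each $(C,y)\in\Gamma$ to length $n\ge L$ by the same invariance-based concatenation (covering the image $f_{j_1\ldots j_{n(C)}}(y)\in Y$ at level $n-n(C)$) produces a fixed-length cover at each large $n$ whose total weight is bounded by the weight of $\Gamma$ times continuity-controlled factors, hence $\o{CP}^A \le P^A$.

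The main obstacle is the concatenation step in (A): in the amalgamated setting the composed Bowen ball $B_{n+k}(x,\omega\omega',\cdot)$ requires $\omega'$ to simultaneously control the iterates of both $f_\omega(x)$ and of $f_\omega(z)$ for each covered point $z$, i.e., both must lie in a common $B_k(x',\omega',\cdot)$, a constraint with no analogue in the single-map case. The resolution is to enlarge the radius in the auxiliary cover $\mathcal F_k$ enough to absorb the displacement $d(f_\omega(x), f_\omega(z))<\vp$ along with its iterates under $\omega'$; the latter is controlled for each fixed $k$ by uniform continuity of the length-$k$ compositions of elements of $G_1$ on compact $X$. The delicate technical point is to balance this $k$-dependent radius adjustment against the $\vp\to 0$ limit in such a way that the error $\gamma(\vp)$ still vanishes in the limit.
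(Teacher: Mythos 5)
Your route is the same as the paper's (concatenation via $G_1$-invariance to get approximate submultiplicativity of the partition functions, then a Fekete-type argument for $\u{CP}^A=\o{CP}^A$; compactness plus repeated concatenation for the second equality), and you have correctly isolated the one genuinely delicate point. But your resolution of that point does not work, so the submultiplicativity $Z_{n+k}(\lambda,c\vp)\le e^{k\gamma(\vp)}Z_n(\lambda,\vp)Z_k(\lambda,\vp)$ — the heart of part (A) — is left unproved. The composed family you build attaches to each $(x,\om)\in\mathcal F_n$ only the concatenations $\om\om'$ for which $f_\om(x)\in B_k(x',\om',\vp)$; a point $z\in B_n(x,\om,\vp)\cap Y$ whose image $f_\om(z)$ lies only in \emph{other} balls of $\mathcal F_k$ is then not covered, and for such a pair the inclusion into $B_{n+k}(x,\om\om',2\vp)$ genuinely fails, since nothing controls the $\om'$-iterates of $f_\om(x)$ relative to those of $x'$. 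Enlarging the radius of $\mathcal F_k$ to absorb the displacement $d(f_\om(x),f_\om(z))<\vp$ together with its iterates under $\om'$ cannot be done: the modulus of continuity of the length-$k$ compositions degrades with $k$ (for expanding generators two points at distance $\vp$ separate to a distance of order $\min(\lambda^k\vp,\diam X)$), so the required enlargement is not of the form $c\vp$, nor even bounded, while the Fekete argument forces $n,k\to\infty$ at \emph{fixed} $\vp$ before $\vp\to0$; there is no balancing available, as you yourself suspect.

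The gap is repairable with no radius inflation at all, by re-centering instead of keeping the center $x$: index the composed family by all pairs $\big((x,\om),(x',\om')\big)\in\mathcal F_n\times\mathcal F_k$ with $E:=B_n(x,\om,\vp)\cap f_\om^{-1}\big(B_k(x',\om',\vp)\big)\cap Y\ne\emptyset$, and pick one $z_0\in E$ as the new center. For any $z\in E$ the triangle inequality through $x$ on the first $n$ steps and through $x'$ on the last $k$ steps gives $E\subset B_{n+k}(z_0,\om\om',2\vp)$; every $z\in Y$ lies in some such $E$ because $f_\om(z)\in Y$ by invariance; and $S_{n+k}\Phi(z_0,\om\om')\le S_n\Phi(x,\om)+S_k\Phi(x',\om')+(n+k)\gamma(\vp)$, so the total weight is at most $e^{(n+k)\gamma(\vp)}Z_n(\lambda,\vp)Z_k(\lambda,\vp)$ at radius $2\vp$ (this also makes your preliminary relocation of centers into $Y$ unnecessary). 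This is in effect what the paper's concatenation does, although the paper writes the composed ball as centered at $x$ and is silent on the very issue you flagged. Separately, in part (B) your weight accounting is off as stated: padding a length-$n(C)$ element by an arbitrary cover at level $n-n(C)$ contributes the full weight of that level, which is exactly the quantity being estimated, not a ``continuity-controlled factor.'' As in the paper, one must take a single finite cover $\Gamma$ of small total weight $\beta<1$ (available since the Bowen balls are open and $Y$ is compact) and concatenate $\Gamma$ with itself repeatedly, so the weights multiply to $\beta^j$ and sum, leaving only a mismatch of length at most $\max_\Gamma n(C)$ to be absorbed by a constant factor of the form $e^{M(\|\Phi\|+|\lambda|)}$.
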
  
 
 \begin{proof}
 Let  $k, n \ge 1$, and assume $k \le n$. Let us take a family $\mathcal F_n$ of balls of type $B_n(x, \om, \vp)$ which cover $Y$, and a family $\mathcal F_k$ of balls $B_k(y, \eta, \vp)$ which cover $Y$, where $(\om, x), (\eta, y) \in \mathcal X_m$.  Now each iterate $f_{\om_{n-1}\ldots \om_0}(x)$ belongs to an $k$-ball $B_m(y, \eta, \vp)$ for some $(\eta, y) \in \mathcal F_k$, as $Y$ is $G_1$-invariant. We can thus concatenate the $n$- and then the $k$-trajectories to form $(n+k)$-trajectories. This concatenation is the key here.
Thus,   we can cover $Y$ with $(n+k)$-balls of type $B(x, \om\eta, 2\vp)$. Moreover,
$$
S_{n+k}\Phi(x, \om\eta) \le  \phi_{\om_0}(x) + \ldots + \phi_{\om_n}(f_{\om_{n-1}\ldots \om_0}(x)) +
\phi_{\eta_0}(y) + \ldots + \phi_{\eta_k}(f_{\eta_{k-1}\ldots\eta_0}(y)) + k o(\vp),
$$ 
 if $f_{\om_{n-1}\ldots\om_0}(x) \in B_k(y, \eta, \vp)$, where $o(\vp)$ is the maximum oscillation of  $\phi_j, 1 \le j \le m$ on a ball of radius $\vp$. 
Recalling the definition (\ref{cn}), we obtain
\begin{equation}\label{SO}
 C_{n+k}(\Phi, Y, G_1, 2\vp) \le C_n(\Phi, Y, G_1, \vp) \cdot C_m(\Phi, Y, G_1, \vp) \cdot e^{ko(\vp)}
 \end{equation}
Then, for any integer $p = \ell n + k, k \le  n$, 
 $\frac{C_p(\Phi, Y, G_1, 2\vp)}{p} \le \inf\frac{C_n(\Phi, Y, G_1, \vp)}{n} + o(\vp)$, so $$\limsup_p \frac{C_p(\Phi, Y, G_1, 2\vp)}{p} \le \inf_n \frac{C_n(\Phi, Y, G_1, 2\vp)}{n} + o(\vp),  \text{and} \ \liminf_p  \frac{C_p(\Phi, Y, G_1, 2\vp)}{p} \ge \inf_n  \frac{C_n(\Phi, Y, G_1, 2\vp)}{n}$$
 Therefore by letting $\vp \to 0$, we obtain $\u{CP}^A(\Phi, Y, G_1) = \o{CP}^A(\Phi, Y, G_1)$.

 For the second part,  assume $Y$ is now compact and $G_1$-invariant. From Theorem \ref{amal} g) and the first part, we know $P^A(\Phi, Y, G_1) \le \u{CP}^A(\Phi, Y, G_1) = \o{CP}^A(\Phi, Y, G_1)$.
 So consider $\alpha > P^A(\Phi, Y, G_1)$ arbitrary. Recall also (\ref{supS}).  Then, from Proposition \ref{proxu} and as $Y$ is compact, it follows that for any $N \ge 1$ there exists an open cover $\cal U$ and a finite collection $\cal G$ of sets of type $X(\bf U, C)$ which cover  $Y$, and  $\exists \beta \in (0, 1)$ such that 
 \begin{equation}\label{betag}
  \mathop{\sum}\limits_{(\bf U, C) \in \cal G} \exp(S_{n(C)}(\bf U, C) - \alpha n(C)) < \beta < 1,
  \end{equation}
  and $n(C) \ge N$ for all $(\bf U, C) \in \cal G$. 
 But now we use again the concatenation of strings $\bf U$, and for any integer $n \ge 1$ form the collection $\cal G^n$ of sets  $X(\bf U^1\ldots \bf U^n, C_1\ldots C_n)$,  for $(U^j, C_j) \in \cal G, 1 \le j \le n$. 
Also consider the union  $\hat{\cal G} := \mathop{\bigcup}\limits_{n \ge 1} {\cal G}^n$. From (\ref{betag}), $\mathop{\sum}\limits_{(\bf V, C') \in \mathcal G^n} \exp(S_{n(C')}(\bf V, C') - \alpha n(C')) \le \beta^n$. 
So there is $\gamma < \infty$ such that 
\begin{equation}\label{gammas}
\mathop{\sum}\limits_{(\bf V, C') \in \hat{\cal G}} \exp(S_{n(C')}(\bf V, C') - \alpha n(C')) \le \mathop{\sum}\limits_{n \ge 1} \beta^n < \gamma < \infty
\end{equation}
Let $M$ be the maximum length of a sequence from  $\mathcal G$. For any  $n\ge 1$, we can reach  length $n$ by sequences in $\hat{\cal G}$. Thus we cover $Y$ using sequences $C'$ of length between $n$ and $n+M$. 
If $C' = (i_1, \ldots, i_{n+M})$, then $S_{n+M}\Phi(\bf V, C') \le S_n\Phi((V_1, \ldots, V_n), (i_1, \ldots, i_n)) + M ||\Phi||$.
Thus from (\ref{gammas}),  $\alpha > \o{CP}^A(\Phi, Y, G_1, \mathcal U)$.
But $\alpha$ is arbitrary and $\cal U$ arbitrary, so $P^A(\Phi, Y, G_1) \ge \o{CP}^A(\Phi, Y, G_1)$. So $P^A(\Phi, Y, G_1) = \u{CP}^A(\Phi, Y, G_1) = \o{CP}^A(\Phi, Y, G_1).$

 \end{proof}
 
Now, define the skew-product map $F: \Sigma_m^+ \times X \to \Sigma_m^+ \times X$,
\begin{equation}\label{FF}
F(\om, x) = (\sigma(\om), f_{\om_0}(x)), \ \text{for any} \  (\om, x) \in \Sigma_m^+ \times X
\end{equation}
 This skew-product is very useful in dynamics of semigroups, random systems (for eg \cite{Bu}). Similarly as above,
 
 \begin{theorem}\label{invcaly}
 \ a) If $\cal Y\subset \Sigma_m^+\times X$ is $F$-invariant, then $\u{CP}^A(\Phi, \cal Y, G_1) = \o{CP}^A(\Phi, \cal Y, G_1)$. 
 
\ b)  If $\cal Y$ is $F$-invariant in $\Sigma_m^+\times X$ and its canonical projection on  second coordinate is compact in $X$, then $P^A(\Phi, \cal Y, G_1) = \u{CP}^A(\Phi, \cal Y, G_1) = \o{CP}^A(\Phi, \cal Y, G_1)$. 
\end{theorem}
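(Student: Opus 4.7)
My plan mirrors the proof of Theorem \ref{invset}, with the $F$-invariance of $\cal Y$ taking the role that $G_1$-invariance of $Y$ played there. The heart of the matter is a submultiplicativity estimate for the covering quantities
\[
C_n(\Phi, \cal Y, G_1, \lambda, \vp) := \inf\Bigl\{\sum_{(\om,x)\in\cal F} e^{S_n\Phi(x,\om) - n\lambda}\Bigr\},
\]
where $\cal F \subset \cal Y$ satisfies $\pi_2(\cal Y) \subset \bigcup_{(\om,x)\in\cal F} B_n(x,\om,\vp)$, exactly as in the definition of the amalgamated capacity pressure along a set of trajectories.

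For part (a), I would fix $\vp>0$ and integers $n\ge k\ge 1$, and take near-optimal families $\cal F_n, \cal F_k \subset \cal Y$. Given $(\om,x)\in\cal F_n$, the iterate $z := f_{\om_{n-1}\ldots\om_0}(x)$ lies in $\pi_2(\cal Y)$ because $F^n(\om,x) = (\sigma^n\om, z)\in\cal Y$ by $F$-invariance; hence there exists $(\zeta,y)\in\cal F_k$ with $z\in B_k(y,\zeta,\vp)$. I would then form $\theta\in\Sigma_m^+$ whose first $n+k$ coordinates are $(\om_0,\ldots,\om_{n-1},\zeta_0,\ldots,\zeta_{k-1})$ and whose tail is chosen so that $(\theta,x)\in\cal Y$. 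The Bowen ball $B_{n+k}(x,\theta,2\vp)$ then covers $B_n(x,\om,\vp)\cap f_{\om|_n}^{-1}(B_k(y,\zeta,\vp))$, and the consecutive sum satisfies $S_{n+k}\Phi(x,\theta)\le S_n\Phi(x,\om) + S_k\Phi(y,\zeta) + k\,o(\vp)$, yielding
\[
C_{n+k}(\Phi, \cal Y, G_1, \lambda, 2\vp) \le C_n(\Phi, \cal Y, G_1, \lambda, \vp)\cdot C_k(\Phi, \cal Y, G_1, \lambda, \vp)\cdot e^{k\,o(\vp)}.
\]
A Fekete-type argument on $(\log C_n(\vp))/n$ then forces $\limsup = \liminf$ up to $o(\vp)$; letting $\vp \to 0$ gives $\u{CP}^A(\Phi,\cal Y,G_1) = \o{CP}^A(\Phi,\cal Y,G_1)$.

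For part (b), with $\pi_2(\cal Y)$ compact I would mirror the second half of the proof of Theorem \ref{invset}. A Proposition \ref{proxu}-type reformulation in terms of finite open covers $\cal U$ of $X$ and sets $X(\bf U, C)$ applies to $\cal Y$; for $\alpha > P^A(\Phi,\cal Y,G_1)$ one extracts a finite collection $\cal G$ covering $\pi_2(\cal Y)$ with $\sum_{(\bf U,C)\in\cal G} \exp(S_{n(C)}(\bf U,C) - \alpha n(C)) < \beta < 1$. Iterating the same $F$-invariant concatenation, the $n$-fold products $\cal G^n$ cover $\pi_2(\cal Y)$ with total weight at most $\beta^n$, so the geometric sum $\sum_{n\ge 1}\beta^n<\infty$ bounds $\o{CP}^A(\Phi,\cal Y,G_1,\cal U)$ by $\alpha$. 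Refining $\cal U$ yields $P^A\ge\o{CP}^A$, and part (a) together with the trivial chain $P^A\le \u{CP}^A\le \o{CP}^A$ closes the three-way equality.

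The main obstacle I foresee is ensuring that the spliced trajectory $\theta$ admits an extension with $(\theta, x)\in\cal Y$. The hypothesis of $F$-invariance directly gives only $F^n(\om,x)\in\cal Y$ for pairs already in $\cal Y$, and not that arbitrary splicings of initial segments of distinct trajectories in $\cal Y$ land back in $\cal Y$. Resolving this---either by a selection argument within the fiber $\{\theta':(\theta',x)\in\cal Y\}$, or by coupling the choice of $\cal F_k$ to the image $F^n(\cal F_n)$ so that the second trajectory can be taken as an initial segment of $\sigma^n\om$---is where the technical work concentrates, and is the point at which this proof departs from a direct transcription of Theorem \ref{invset}.
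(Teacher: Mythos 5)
Your plan is the same concatenation strategy that the paper itself invokes (Theorem \ref{invcaly} is stated with only the phrase ``similarly as above'', pointing back to the proof of Theorem \ref{invset}), but as written it has a genuine gap, and it is exactly the one you flag at the end without resolving. In the definitions of $P^A(\Phi,\cal Y,G_1)$ and of its capacities along a set of trajectories, the covering family must be a subset of $\cal Y$: every Bowen ball used must be of the form $B_n(y,\om,\vp)$ with $(\om,y)\in\cal Y$. Plain $F$-invariance of $\cal Y$ gives you, for $(\om,x)\in\cal F_n\subset\cal Y$, that $F^n(\om,x)=(\sigma^n\om,f_{\om_{n-1}\ldots\om_0}(x))\in\cal Y$, so the iterated point is covered by some $B_k(y,\zeta,\vp)$ with $(\zeta,y)\in\cal F_k$; the triangle inequality then places the relevant set inside a Bowen ball of length $n+k$ centered at $x$ along the \emph{spliced} word $(\om_0,\ldots,\om_{n-1},\zeta_0,\ldots,\zeta_{k-1})$. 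But $F$-invariance says nothing about whether any extension $\theta$ of this spliced word satisfies $(\theta,x)\in\cal Y$, so the constructed cover is in general not admissible, and the claimed inequality $C_{n+k}(\Phi,\cal Y,G_1,\lambda,2\vp)\le C_n\cdot C_k\cdot e^{k\,o(\vp)}$ does not follow. Note that $\zeta$ cannot simply be replaced by $\sigma^n\om$ (which \emph{is} admissible at $f_{\om_{n-1}\ldots\om_0}(x)$, giving the genuine ball $B_{n+k}(x,\om,\vp)$), because the certificate of $\vp$-closeness you obtain from $\cal F_k$ is along $\zeta$, not along $\sigma^n\om$; and your two proposed repairs (a selection in the fiber $\{\theta':(\theta',x)\in\cal Y\}$, or coupling $\cal F_k$ to $F^n(\cal F_n)$) are only sketched --- the first has no reason to produce a trajectory agreeing with the splice on the first $n+k$ symbols, and the second destroys the comparison with the independently optimized quantity $C_k$ that the Fekete argument needs.

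The same obstruction reappears in your part (b): the open-cover reformulation and the collections $\cal G^n$ obtained by concatenating strings $C_1\ldots C_n$ again produce words that need not occur as initial segments of trajectories admissible for $\cal Y$ at the relevant centers, so the geometric-series bound on $\o{CP}^A(\Phi,\cal Y,G_1,\cal U)$ is not justified under the definition constrained to $\Gamma\subset\cal Y$. In short, your write-up reproduces the mechanism of Theorem \ref{invset} correctly, but the step where it ``departs from a direct transcription'' is precisely the step that carries the whole theorem, and it is left open; as it stands the argument proves the statement only for sets $\cal Y$ whose fibers are closed under such splicings (for instance product sets $A\times Y$ with $A$ shift-invariant and suitably saturated), not for an arbitrary $F$-invariant $\cal Y$.
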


 Similarly to the case of the amalgamated pressure, one can prove:
 
 \begin{theorem}\label{propcond}
 Let $X$ be a compact metric space and $\Phi \in \mathcal C(X, \bb R^m)$ a multi-potential on $X$. Let also $G_1 = \{id_X, f_1, \ldots, f_m\}$ a finite set that generates the semigroup $G$ of maps of $X$. Then the following properties hold:
 
 \ a) If $Z_1 \subset Z_2$, then $P_l(\Phi, Z_1, G_1) \le P_l(\Phi, Z_2, G_1)$, and $P_u(\Phi,  Z_1, G_1) \le P_u(\Phi, Z_2, G_1)$. 
 
 \ b) Let $Z_i \subset X, 1 \le i$ and $Z := \mathop{\cup}\limits_{i \ge 1} Z_i$. Then $P_l(\Phi, Z, G_1) = \mathop{\sup}\limits_{i \ge 1} P_l(\Phi, Z_i, G_1)$, and $P_u(\Phi, Z, G_1) = \mathop{\sup}\limits_{i \ge 1} P_u(\Phi, Z_i, G_1)$.
 
 \ c) If $g:X \to Z$ is a homeomorphism of two compact metric spaces, and $Y \subset X$, then $P_l(\Phi, Y, G_1) = P_l(\Phi\circ g^{-1}, g(Y), g\circ G_1\circ g^{-1})$, and $P_u(\Phi, Y, G_1) = P_u(\Phi\circ g^{-1}, g(Y), g\circ G_1\circ g^{-1})$, where $\Phi\circ g^{-1} = (\phi_1\circ g^{-1}, \ldots, \phi_m\circ g^{-1})$ and $g\circ G_1 \circ g^{-1} = \{g\circ \kappa \circ g^{-1}, \kappa \in G_1\}$.
 
 \ d) If $Z_1 \subset Z_2$, then $\u{CP}_l(\Phi, Z_1, G_1) \le \u{CP}_l(\Phi, Z_2, G_1)$, and $\o{CP}_l(\Phi, Z_1, G_1) \le \o{CP}_l(\Phi, Z_2, G_1)$. Similarly for $\u{CP}_u$ and $\o{CP}_u$.
 
 \ e) If $Z_j, j \ge 1$ are subsets of $X$ and $Z:= \mathop{\cup}\limits_{j \ge 1} Z_j$, then $\u{CP}_l(\Phi, Z, G_1) \ge \sup_{j \ge 1} \u{CP}_l(\Phi, Z_j, G_1)$, and $\o{CP}_l(\Phi, Z, G_1) \ge \sup_{j \ge 1} \o{CP}_l(\Phi, Z_j, G_1)$. Similarly for   $\u{CP}_u$ and $\o{CP}_u$.
 
 \ f) If $g:X \to X$ is a homeomorphism which commutes with every  map $f_i$ from $G_1$, then $\u{CP}_l(\Phi, g(Y), G_1) = \u{CP}_l(\Phi\circ g, Y, G_1)$, and $\o{CP}_l(\Phi, g(Y), G_1) = \o{CP}_l(\Phi \circ g, Y, G_1)$. Similarly for   $\u{CP}_u$ and $\o{CP}_u$.
 
 \ g)  $P_l(\Phi, Y, G_1) \le \u{CP}_l(\Phi, Y, G_1) \le \o{CP}_l(\Phi, Y, G_1)$, and  $P_u(\Phi, Y, G_1) \le \u{CP}_u(\Phi, Y, G_1) \le \o{CP}_u(\Phi, Y, G_1)$.
 \end{theorem}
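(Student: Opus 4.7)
The plan is to mirror the proof of Theorem \ref{amal} essentially verbatim, exploiting that the condensed pressure is a genuine Carath\'eodory--Pesin structure built from the set family $\{B_n(x,G_1,\vp)\}_{n,x}$ with weights $\exp(s_n\Phi(x)-\lambda n)$ (resp.\ $\exp(S_n\Phi(x)-\lambda n)$), which depend only on the center $x$ and the length $n$, not on a trajectory. I will treat the four statements about $P_l$ and $\underline{CP}_l$, $\overline{CP}_l$; the arguments for $P_u$ and its capacities are identical, replacing $s_n\Phi$ by $S_n\Phi$ throughout.

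First I would dispatch (a) and (d): if $Z_1\subset Z_2$, every family of condensed Bowen balls covering $Z_2$ also covers $Z_1$, so $M_{l,N}(\Phi,Z_1,G_1,\lambda,\vp)\le M_{l,N}(\Phi,Z_2,G_1,\lambda,\vp)$, and letting $N\to\infty$ then $\vp\to 0$ yields the two monotonicities. The same argument applied to the capacity expressions (which use only balls of a single length $n$) gives (d). For (g), a cover of $Y$ by balls of the fixed length $N$ is in particular a cover by balls of lengths $\ge N$, hence $m_l(\Phi,Y,G_1,\lambda,\vp)\le \liminf_N M_{l,N}^{\mathrm{cap}}$, which translates to $P_l\le \underline{CP}_l$; the inequality $\underline{CP}_l\le\overline{CP}_l$ is just $\liminf\le\limsup$.

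For (b), the ``$\ge$'' direction follows from (a). For the reverse, I would repeat the $\vp/2^j$ trick used in Theorem \ref{amal}(b): given $\lambda>\sup_j P_l(\Phi,Z_j,G_1)$, pick for each $j$ a countable family $\mathcal F_j$ of pairs $(x,n)$ with $n\ge N$ such that $Z_j\subset\bigcup_{(x,n)\in\mathcal F_j}B_n(x,G_1,\vp)$ and $\sum_{(x,n)\in\mathcal F_j}\exp(s_n\Phi(x)-\lambda n)<\vp/2^j$; then $\mathcal F:=\bigcup_j\mathcal F_j$ witnesses $m_l(\Phi,Z,G_1,\lambda,\vp)\le\vp$ and hence $P_l(\Phi,Z,G_1)\le\lambda$. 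Property (e) for the capacities is strictly weaker because a capacity cover is forced to use a single length $n$: a cover of $Z$ with $n$-balls restricts to a cover of each $Z_j$, so $M_{l,n}(\Phi,Z_j,G_1,\lambda,\vp)\le M_{l,n}(\Phi,Z,G_1,\lambda,\vp)$, and passing to $\liminf$ or $\limsup$ in $n$ and then $\vp\to 0$ yields the $\sup$-inequality (equality fails in general precisely because one cannot adapt $n$ to each $Z_j$).

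The conjugation statements (c) and (f) are the parts that require a bit of care. For (c), the map $g$ sends the $G_1$-trajectory of $x$ to the $gG_1g^{-1}$-trajectory of $g(x)$ bijectively, so $s_n\Phi(x,G_1)=s_n(\Phi\circ g^{-1})(g(x),gG_1g^{-1})$; uniform continuity of $g$ and $g^{-1}$ on the compact spaces $X,Z$ yields, for any $\vp>0$, some $\vp'(\vp)\to 0$ as $\vp\to 0$ with $g(B_n(x,G_1,\vp))\subset B_n(g(x),gG_1g^{-1},\vp')$ and symmetrically, so the coverings are in bijective correspondence with matching weights; the limits in $\vp$ then coincide. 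For (f), the commutation $g\circ f_i=f_i\circ g$ ensures $g(B_n(x,G_1,\vp))\subset B_n(g(x),G_1,\vp')$ (again by uniform continuity of $g$) and gives $s_n(\Phi\circ g)(x)=s_n\Phi(g(x))$ directly, so a cover of $Y$ transports to a cover of $g(Y)$ of the same cardinality and weight, and vice versa. The only genuine obstacle is verifying that the uniform-continuity distortion $\vp\mapsto\vp'(\vp)$ really vanishes in the limit for the capacity versions as well; this is standard since $\underline{CP}_l,\overline{CP}_l$ are defined as $\lim_{\vp\to 0}$ of the corresponding $\vp$-level quantities. All other items are routine, and the proofs for $P_u$, $\underline{CP}_u$, $\overline{CP}_u$ are obtained by replacing $s_n\Phi$ with $S_n\Phi$ without change.
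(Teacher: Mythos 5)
Your proposal is correct and follows essentially the same route as the paper: the paper proves Theorem \ref{propcond} by transporting the proof of Theorem \ref{amal} (monotonicity of covers for a), d), e); the $\vp/2^j$ union trick for the countable-union property b); the observation that capacities use covers of a single fixed length for g); and the "straightforward" conjugation arguments for c), f)), which is exactly what you do, with the correct additional observations that the weights $s_n\Phi$, $S_n\Phi$ and the condensed balls transform as claimed under conjugation and commutation. No gaps.
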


 \begin{theorem}\label{osccond}
 If $Y \subset X$ and $\Phi, \Psi \in \mathcal C(X, \bb R^m)$, then 
 $$|P_l(\Phi, Y, G_1) - P_l(\Psi, Y, G_1)| \le ||\Phi - \Psi||, \ \text{and} \ 
 |P_u(\Phi, Y, G_1) - P_u(\Psi, Y, G_1)| \le ||\Phi - \Psi||,$$
 $$|\u{CP}_l(\Phi, Y, G_1) - \u{CP}_l(\Psi, Y, G_1)| \le ||\Phi - \Psi||, \ 
 |\o{CP}_l(\Phi, Y, G_1) - \o{CP}_l(\Psi, Y, G_1)| \le ||\Phi - \Psi||$$
 $$|\u{CP}_u(\Phi, Y, G_1) - \u{CP}_u(\Psi, Y, G_1)| \le ||\Phi - \Psi||, \  |\o{CP}_u(\Phi, Y, G_1) - \o{CP}_u(\Psi, Y, G_1)| \le ||\Phi - \Psi||.$$
 \end{theorem}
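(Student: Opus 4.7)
The strategy mirrors the proof of Theorem \ref{oscpressure}, exploiting the fact that the six functionals in Theorem \ref{osccond} are all built, via the Carath\'eodory–Pesin recipe, out of consecutive sums that depend linearly on the potential. First I would record the elementary pointwise bound: for every $x \in X$, every $\omega \in \Sigma_m^+$, and every $n \ge 1$,
\begin{equation*}
|S_n\Phi(x,\omega) - S_n\Psi(x,\omega)| \;\le\; \sum_{k=1}^{n}\bigl|\phi_{\omega_{k-1}} - \psi_{\omega_{k-1}}\bigr|\!\bigl(f_{\omega_{k-2}\ldots\omega_0}(x)\bigr) \;\le\; n\,\|\Phi-\Psi\|.
\end{equation*}
Since $\inf$ and $\sup$ over $\omega \in \Sigma_m^+$ preserve uniform bounds, this yields $|s_n\Phi(x) - s_n\Psi(x)| \le n\|\Phi-\Psi\|$ and $|S_n\Phi(x) - S_n\Psi(x)| \le n\|\Phi-\Psi\|$, which will be the only quantitative input.

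Write $\delta := \|\Phi-\Psi\|$. Substituting the bound $s_n\Psi(x) \le s_n\Phi(x) + n\delta$ into the definition of $M_{l,N}$, any admissible family $\mathcal F$ of centers covering $Y$ by condensed Bowen balls $B_n(x,G_1,\vp)$ satisfies
\begin{equation*}
\sum_{x\in\mathcal F} \exp\bigl(s_n\Psi(x) - \lambda n\bigr) \;\le\; \sum_{x\in\mathcal F}\exp\bigl(s_n\Phi(x) - (\lambda-\delta)n\bigr),
\end{equation*}
so $M_{l,N}(\Psi,Y,G_1,\lambda,\vp) \le M_{l,N}(\Phi,Y,G_1,\lambda-\delta,\vp)$. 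Taking $N\to\infty$, then passing to the thresholds defining $P_l(\cdot,Y,G_1,\vp)$ and finally letting $\vp\to 0$, one concludes $P_l(\Psi,Y,G_1) \le P_l(\Phi,Y,G_1) + \delta$; exchanging the roles of $\Phi$ and $\Psi$ gives the opposite inequality, proving the first estimate. The argument for $P_u$ is identical, using $S_n$ in place of $s_n$.

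For the four capacity functionals, the same substitution in
\begin{equation*}
\inf\Bigl\{\sum_{x\in\mathcal F}\exp(s_n\Psi(x)-\lambda n)\;:\;Y\subset\bigcup_{x\in\mathcal F}B_n(x,G_1,\vp)\Bigr\}
\end{equation*}
(and the analogous expression with $S_n$) shows that the $n$-th quantity defining $\underline{CP}_l,\overline{CP}_l,\underline{CP}_u,\overline{CP}_u$ for $\Psi$ at level $\lambda$ is dominated by that for $\Phi$ at level $\lambda-\delta$, whence $\liminf$ and $\limsup$ in $n$ preserve the shift. Letting $\vp\to 0$ and swapping $\Phi\leftrightarrow\Psi$ produces the remaining four Lipschitz bounds.

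There is no genuine obstacle here; the entire content is the observation that the consecutive sums $s_n\Phi,S_n\Phi$ depend $n$-Lipschitz\-ly on $\Phi$ in the uniform norm, and that every threshold defining these pressures is stable under multiplying the weights by $e^{\pm n\delta}$. The only mild care needed is that the monotone limits in $N$ and $\vp$ commute with the affine shift $\lambda\mapsto\lambda-\delta$, which is immediate since $\delta$ is a constant independent of $N$ and $\vp$.
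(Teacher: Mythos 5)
Your proposal is correct and follows essentially the same route the paper intends: the pointwise bound $|S_n\Phi(x,\omega)-S_n\Psi(x,\omega)|\le n\|\Phi-\Psi\|$ (hence the same bound for $s_n$ and $S_n$ after taking $\inf$/$\sup$ over $\omega$), substituted into the defining sums with the shift $\lambda\mapsto\lambda-\|\Phi-\Psi\|$, exactly as in the proof of Theorem \ref{oscpressure} to which the paper implicitly appeals. Nothing further is needed.
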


 
 \textbf{Remark.} For  lower/upper condensed pressure we do not have an equivalent of Theorem \ref{invset}, since in general one cannot concatenate two balls $B_n(x, G_1, \vp)$ and $B_p(y, G_1, \vp)$ to create an $(n+p)$-condensed ball. 
 $\hfill\square$
 
 The following Theorem can be proved similarly as above. 
 
 \begin{theorem}\label{proptexh}
 In the above setting, the conclusions of Theorem \ref{propcond} and  Theorem \ref{osccond} are satisfied for the trajectory pressure $P(\Phi, Y, G_1, \om)$, and for its associated capacities, for any $\om \in \Sigma_m^+$. The same conclusions are satisfied also for  the lower and upper exhaustive pressures, and for their associated capacities. 
 \end{theorem}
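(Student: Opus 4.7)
The plan is to adapt the proofs of Theorem \ref{propcond} and Theorem \ref{osccond} to the two new classes of functionals; both are Carath\'eodory-Pesin constructions built on restricted families of covering sets with weights that are affine in $n$, so the arguments run in parallel with only cosmetic changes.

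For the trajectory pressure $P(\Phi, Y, G_1, \om)$, $\om \in \S_m^+$ fixed, the covers of $Y$ are Bowen balls $B_n(y, \om, \vp)$ along the single trajectory $\om$, and $P(\Phi, Y, G_1, \om)$ is by construction the specialisation of $P^A(\Phi, \cal Y, G_1)$ to $\cal Y = \{\om\}\times Y$. Property a) is immediate monotonicity. For b), the $\vp/2^j$ trick used in the proof of Theorem \ref{amal} b) applies verbatim once one notes that a countable union of covers of the $Z_j$ remains a cover of $Z$. For c), conjugation by $g$ carries $B_n(y, \om, \vp)$ to a Bowen ball for $gG_1g^{-1}$ centred at $g(y)$, with a modulus-of-continuity distortion absorbed in the limit $\vp\to 0$. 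Properties d), e), g) are the standard formal consequences of the definitions; for f), the commutation hypothesis yields $\phi_{\om_k}(f_{\om_{k-1}\ldots\om_0}(gx)) = (\phi_{\om_k}\circ g)(f_{\om_{k-1}\ldots\om_0}(x))$, giving term-by-term equality of the consecutive sums. The Lipschitz bounds of Theorem \ref{osccond} follow from the pointwise estimate $|S_n\Phi(x, \om) - S_n\Psi(x, \om)| \le n\|\Phi - \Psi\|$.

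For the exhaustive pressures $P_u^+, P_l^+$, the only change is that covers use exhaustive balls $B_n^+(x, G_1, \vp) = \bigcup_{|\om|=n} B_n(x, \om, \vp)$, with weights $\exp(S_n\Phi(x) - \lambda n)$ and $\exp(s_n\Phi(x) - \lambda n)$ respectively. Properties a), b), d), e), g) go through word for word via the standard Carath\'eodory-Pesin arguments. For c), $g$ sends $B_n^+(x, G_1, \vp)$ to an exhaustive ball for $gG_1g^{-1}$ at $g(x)$ up to the usual modulus-of-continuity distortion. For f), $gf_i = f_ig$ gives $g(B_n^+(x, G_1, \vp)) = B_n^+(g(x), G_1, \vp)$ and, on the consecutive-sum side, $s_n(\Phi\circ g)(x) = s_n\Phi(g(x))$ and $S_n(\Phi\circ g)(x) = S_n\Phi(g(x))$. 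The Lipschitz bounds of Theorem \ref{osccond} follow from the pointwise estimates $|s_n\Phi(x) - s_n\Psi(x)| \le n\|\Phi - \Psi\|$ and $|S_n\Phi(x) - S_n\Psi(x)| \le n\|\Phi - \Psi\|$, obtained by bounding the integrand \emph{before} taking $\inf_\om$ or $\sup_\om$.

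The only mildly subtle point is f) for the exhaustive pressure: one must check that replacing $\Phi$ with $\Phi\circ g$ commutes with the $\inf_\om$ and $\sup_\om$ defining $s_n$ and $S_n$. The commutation $gf_i = f_ig$ resolves this, since it allows each summand in $S_n\Phi(g(x), \om)$ to be rewritten as the corresponding summand of $S_n(\Phi\circ g)(x, \om)$ for the \emph{same} $\om$, whence taking $\sup$ (respectively $\inf$) over $\om$ produces the required equality. All remaining claims are routine adaptations, and no new ideas are required.
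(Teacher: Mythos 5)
Your proposal is correct and follows the approach the paper itself intends: the paper gives no explicit proof of Theorem \ref{proptexh}, remarking only that it "can be proved similarly as above," and your argument is exactly the natural adaptation of the proofs of Theorems \ref{amal}, \ref{oscpressure}, \ref{propcond} and \ref{osccond} to covers by trajectory Bowen balls $B_n(y,\om,\vp)$ (fixed $\om$) and exhaustive balls $B_n^+(x,G_1,\vp)$, with the weights $\exp(S_n\Phi(y,\om)-\lambda n)$, $\exp(s_n\Phi(x)-\lambda n)$, $\exp(S_n\Phi(x)-\lambda n)$ respectively. The one point you single out as subtle — that commutation $gf_i=f_ig$ gives $S_n\Phi(g(x),\om)=S_n(\Phi\circ g)(x,\om)$ termwise, hence $s_n(\Phi\circ g)(x)=s_n\Phi(g(x))$ and $S_n(\Phi\circ g)(x)=S_n\Phi(g(x))$ after $\inf_\om$ or $\sup_\om$ — is indeed the only place where a genuine verification is needed, and you resolve it correctly. (For part f) of the exhaustive case you write $g(B_n^+(x,G_1,\vp))=B_n^+(g(x),G_1,\vp)$ where strictly one has set inclusions up to a metric distortion governed by the moduli of continuity of $g$ and $g^{-1}$, but you invoke exactly this distortion-absorbed-in-the-$\vp\to 0$-limit mechanism in part c), so the intent is clear and the argument sound.)
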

 
 
 We investigate now the invariance properties of the amalgamated pressure over an arbitrary  set of trajectories (see (\ref{caly})). 

\begin{theorem}\label{invy}
In the above setting, consider a set $\cal Y \subset \Sigma_m^+ \times X$, and let $\Phi$ be a continuous multi-potential on the compact metric space $X$. Assume there exists $M>1$ such that $Card(f_j^{-1}(x)) \le M, 1 \le j \le m, x \in X$. Then,  $$P^A(\Phi, \cal Y, G_1) = P^A(\Phi, F(\cal Y), G_1)$$
 \end{theorem}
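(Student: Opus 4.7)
The plan is to prove the two inequalities $P^A(\Phi, \cal Y, G_1) \le P^A(\Phi, F(\cal Y), G_1)$ and $P^A(\Phi, F(\cal Y), G_1) \le P^A(\Phi, \cal Y, G_1)$ separately, using push-forward and lift constructions at the level of covers. Two recurring ingredients are the additive decomposition
$$S_n\Phi(y, \om) = \phi_{\om_0}(y) + S_{n-1}\Phi(f_{\om_0}(y), \sigma\om),$$
which provides a uniformly bounded multiplicative comparison between the weights assigned to $(\om, y) \in \cal Y$ and to $F(\om, y) \in F(\cal Y)$, and the inclusion $f_{\om_0}(B_n(y, \om, \vp)) \subset B_{n-1}(f_{\om_0}(y), \sigma\om, \vp)$, which is immediate from the definition of the Bowen ball.

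For $P^A(\Phi, F(\cal Y), G_1) \le P^A(\Phi, \cal Y, G_1)$, I would take a near-minimising cover $\Gamma = \{(\om^i, y^i)\}_i \subset \cal Y$ of $\pi_2(\cal Y)$ by balls $B_{n_i}(y^i, \om^i, \vp)$ and push it forward to $F(\Gamma) \subset F(\cal Y)$ with Bowen balls $B_{n_i-1}(f_{\om^i_0}(y^i), \sigma\om^i, \vp)$. A point $u = f_{\om_0}(x) \in \pi_2(F(\cal Y))$ with $(\om, x) \in \cal Y$ satisfies $x \in B_{n_i}(y^i, \om^i, \vp)$ for some $i$, and the Bowen inclusion covers $u$ whenever $\om_0 = \om^i_0$. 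The mismatch $\om_0 \ne \om^i_0$ is handled by enlarging $\Gamma$ to include, for each $(\om^i, y^i)$ and each $j \in \{1, \ldots, m\}$, the auxiliary pair $(j \sigma\om^i, y^i)$ whenever it lies in $\cal Y$; the sum grows by a factor of at most $m e^{||\Phi|| + |\lambda|}$, which is absorbed in the limits $\vp \to 0$ and $N \to \infty$.

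For the reverse inequality, I would lift a near-minimising cover $\Gamma' \subset F(\cal Y)$ back via $F^{-1}$. The cardinality hypothesis yields at most $mM$ preimages in $\cal Y$ per element of $\Gamma'$ (at most $m$ choices for the new first symbol and at most $M$ choices for a preimage in $X$). For the covering property I would invoke a uniform preimage lemma, which follows from continuity of the $f_j$ and compactness of $X$ by a standard contradiction argument: for every $\vp' > 0$ there is $\vp > 0$ such that for every $j$ and every $z \in X$, $f_j^{-1}(B(z, \vp)) \subset \bigcup_{w \in f_j^{-1}(z)} B(w, \vp')$. Then for $x \in \pi_2(\cal Y)$ with $(\om, x) \in \cal Y$, the image $f_{\om_0}(x) \in B_{n_i}(z^i, \eta^i, \vp)$ for some $i$, so $x$ is $\vp'$-close to some $w \in f_{\om_0}^{-1}(z^i)$; the Bowen conditions for $f_{\om_0}(x) \in B_{n_i}(z^i, \eta^i, \vp)$ translate, under $f_{\om_0}$, into Bowen conditions placing $x \in B_{n_i+1}(w, \om_0 \eta^i, \vp')$.

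The main obstacle is in this second direction: one must verify that the lift $(\om_0 \eta^i, w)$ is indeed an element of $\cal Y$, so that it is legitimately available for $\Gamma \subset \cal Y$. Since $(\om, x) \in \cal Y$ forces $F(\om, x) = (\sigma\om, f_{\om_0}(x)) \in F(\cal Y)$ lying close to $(\eta^i, z^i)$, one traces through the $F$-preimage structure to show that among the at most $mM$ preimages of $(\eta^i, z^i)$ in $\cal Y$, one has first coordinate matching $\om_0$ and provides the required covering. This matching step is precisely where the bounded cardinality hypothesis becomes essential: it bounds the branches one must enumerate and thereby the multiplicity of the lifted cover, and it ensures that the lifted sum is controlled by a factor $mM e^{-\lambda + ||\Phi||}$ times the original, which vanishes after taking $\vp \to 0$ and $N \to \infty$.
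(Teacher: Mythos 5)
Your proposal follows the same two-sided strategy as the paper's proof: push a near-optimal cover $\Gamma \subset \cal Y$ forward by $F$ for one inequality, and lift a near-optimal cover $\Gamma' \subset F(\cal Y)$ back through $F^{-1}$ using the bounded-preimage hypothesis for the other. You also correctly anticipate the two subtleties that the paper's terse proof leaves implicit: the first-symbol mismatch in the push-forward direction, and the need to control the geometry of $F^{-1}$ in the lift direction. The difficulty is that the patches you offer for these do not close them.

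In the push-forward direction, the auxiliary pair $(j\sigma\om^i, y^i)$ does not repair the mismatch. Even when it lies in $\cal Y$, membership $x \in B_{n_i}(y^i, \om^i, \vp)$ only controls $d(f_{\om^i_{k}\cdots\om^i_0}(x), f_{\om^i_{k}\cdots\om^i_0}(y^i))$ for $0 \le k < n_i$; it gives no control at all on the iterates $f_{\om^i_k\cdots\om^i_1 j}$ with $j \ne \om^i_0$, since replacing the first map changes every subsequent composition, so $u = f_j(x)$ need not belong to $B_{n_i-1}(f_j(y^i), \sigma\om^i, \vp)$. The claimed inflation factor $m e^{||\Phi||+|\lambda|}$ is also incorrect: $S_{n_i}\Phi(y^i, j\sigma\om^i)$ and $S_{n_i}\Phi(y^i, \om^i)$ differ in every term past the first (the arguments of $\phi_{\om^i_1}, \phi_{\om^i_2}, \ldots$ all move), so the ratio of weights can reach $e^{2n_i||\Phi||}$, which is not absorbed in the limit. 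In the lift direction, the uniform preimage lemma is false as stated: on $X = [-1,1]$ with $f(x) = x^2$ and $z < 0$ with $|z| < \vp$, the point $0$ lies in $f^{-1}(B(z,\vp))$ while $f^{-1}(z) = \emptyset$, so the right-hand side is empty; in general the lemma needs lower semicontinuity of $z \mapsto f_j^{-1}(z)$ (essentially openness of $f_j$), which does not follow from continuity plus compactness. And even granting such a lemma, the particular lift $(\om_0\eta^i, w)$ with $d(x,w)$ small must lie in $\cal Y$ to be admissible for $P^A(\Phi, \cal Y, G_1)$; the cardinality bound limits how many $F$-preimages $(\eta^i, z^i)$ has, but it does not guarantee that the one with first symbol $\om_0$ and base point near $x$ belongs to $\cal Y$. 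The paper's own proof is a sketch that takes all of this for granted; you are right to flag the issues, but the remedies proposed do not resolve them, and in particular the enlarged $\Gamma$ and the preimage lemma would both need to be replaced by genuinely different arguments.
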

 
 \begin{proof}
 First let us denote $Y = \pi_2(\cal Y) \subset X$, and for any $\vp>0$ and $N >1$, take a countable family $\Gamma \subset \cal Y$ such that $Y \subset \mathop{\cup}\limits_{(\om, y) \in \Gamma} B_{n(\om, y)}(y, \om, \vp)$, and $n(\om, y) >N$ for all $(\om, y) \in \Gamma$.
 Recall that $F(\om, y) = (\sigma(\om), f_{\om_0}(y))$, so if we take $F(\Gamma)$, then we  obtain a set of trajectories $\Gamma'$ which can be used for the amalgamated pressure corresponding to $F(\cal Y)$. But the consecutive sums of $\Phi$, $S_n\Phi(y, \om)$ and $S_{n-1}\Phi(f_{\om_0}(y), \sigma\om)$  differ by only one term. Hence by taking limits when $N \to \infty$ and then $\vp \to 0$, one obtains $P^A(\Phi, \cal Y, G_1) \ge P^A(\Phi, F(\cal Y), G_1)$.

Vice-versa let a countable collection $\Gamma'$ so that the Bowen balls along the respective trajectories in $\Gamma'$ cover $\pi_2(F(\cal Y)$, and the lenghts of their trajectories are larger than $N$. Then using that each point $x \in X$ has at most $M$ $f_j$-preimages in $X$, for $1 \le j \le m$, it follows that we obtain a cover $\Gamma$ of $Y$ having at most $mM$ Bowen balls $B_{n+1}(x, \om, \vp)$ for each of the Bowen balls $B_n(y, \sigma\om, \vp)$ from $\Gamma$, with $f_{\om_0}(x) = y$. But in this case for every $(\om', y) \in \Gamma'$, and $\om\in \Sigma_m^+$ such that $\sigma \om = \om'$ and $x\in X$ such that $f_{\om_0}(x) = y$, we have that consecutive sums $S_{n+1}\Phi(x, \om)$ and $S_n\Phi(y, \sigma\om)$ differ by only one term. By taking  $N \to \infty$, and then  $\vp \to 0$, it follows $P^A(\Phi, \cal Y, G_1) \le P^A(\Phi, F(\cal Y), G_1)$. Thus from  above inequalities we obtain the formula.
\end{proof}
 
From the previous Theorem we obtain the following: 

 \begin{corollary}\label{trajominv}
Assume the maps $f_j \in G_1$ are surjective on $X$, and there exists $M >1$ so that each $x \in X$ has at most $M$ $f_j$-preimages in $X$, for $1 \le j \le m$. Then,  $$P(\Phi, G_1, \om) = P(\Phi, G_1, \sigma(\om)), \ \om \in \Sigma_m^+$$
\end{corollary}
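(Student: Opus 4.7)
The plan is to reduce the corollary directly to Theorem \ref{invy} via a careful choice of trajectory set. Fix $\om \in \Sigma_m^+$ and set $\mathcal Y := \{\om\} \times X$. By the description of the trajectory pressure in part 3 of Section 2 and by Definition \ref{calydef}, we have
\begin{equation*}
P(\Phi, G_1, \om) \;=\; P^A(\Phi, \{\om\}\times X, G_1) \;=\; P^A(\Phi, \mathcal Y, G_1).
\end{equation*}

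Next I compute $F(\mathcal Y)$. Since $F(\om, x) = (\sigma\om, f_{\om_0}(x))$ by (\ref{FF}), one has $F(\mathcal Y) = \{\sigma\om\} \times f_{\om_0}(X)$. The hypothesis that every $f_j \in G_1$ is surjective on $X$ yields $f_{\om_0}(X) = X$, so $F(\mathcal Y) = \{\sigma\om\} \times X$, and therefore the same identification gives $P^A(\Phi, F(\mathcal Y), G_1) = P(\Phi, G_1, \sigma\om)$.

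Finally, the standing assumption that there exists $M > 1$ with $\operatorname{Card}(f_j^{-1}(x)) \le M$ for every $x \in X$ and every $1 \le j \le m$ is precisely the hypothesis of Theorem \ref{invy}. Applying that theorem to our trajectory set $\mathcal Y$, we obtain $P^A(\Phi, \mathcal Y, G_1) = P^A(\Phi, F(\mathcal Y), G_1)$, and combining the three displayed equalities delivers the desired identity $P(\Phi, G_1, \om) = P(\Phi, G_1, \sigma\om)$.

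This argument is essentially a one-line reduction; the only point deserving care is the verification that the trajectory pressure $P(\Phi, G_1, \om)$ (with no set indicated) is the amalgamated pressure along $\{\om\} \times X$, which is unambiguous from the convention adopted in part 3 of Section 2. The surjectivity hypothesis is what ensures no ``loss'' in passing from $\mathcal Y$ to $F(\mathcal Y)$ on the $X$-coordinate, and the bounded-preimage hypothesis ensures the reverse estimate in Theorem \ref{invy}; both hypotheses are therefore used in an essential way and cannot be dropped.
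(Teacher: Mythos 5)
Your proof is correct and follows exactly the route the paper intends: the paper precedes Corollary \ref{trajominv} with ``From the previous Theorem we obtain the following,'' and your argument is precisely the intended one-line reduction via $\mathcal Y = \{\om\}\times X$ and Theorem \ref{invy}, with surjectivity supplying $F(\mathcal Y) = \{\sigma\om\}\times X$ and the bounded-preimage hypothesis supplying the hypothesis of that theorem.
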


 Now we  compare between the various types of pressures:
 
 \begin{theorem}\label{compac}
 In the above setting, if $Y\subset X$, then $$P^A(\Phi, Y, G_1) \le P_l(\Phi, Y, G_1) \le P_u(\Phi, Y, G_1)$$
 \end{theorem}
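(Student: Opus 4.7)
The plan is to treat the two inequalities separately, each via the direct inclusion that condensed Bowen balls are contained in trajectory Bowen balls, and that the lower consecutive sum bounds the upper one.

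For the inequality $P_l(\Phi, Y, G_1) \le P_u(\Phi, Y, G_1)$, I would work at fixed $\vp>0$, $\lambda \in \mathbb{R}$, and $N \ge 1$. The cover class appearing in $M_{l,N}$ and $M_{u,N}$ is identical (covers of $Y$ by condensed balls $B_n(x,G_1,\vp)$ with $n \ge N$), and by definition $s_n\Phi(x) \le S_n\Phi(x)$, so term-wise $\exp(s_n\Phi(x)-\lambda n) \le \exp(S_n\Phi(x)-\lambda n)$. Hence $M_{l,N} \le M_{u,N}$, so $m_l \le m_u$, and whenever $m_u(\Phi,Y,G_1,\lambda,\vp)=0$ we also have $m_l(\Phi,Y,G_1,\lambda,\vp)=0$. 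Thus $P_l(\Phi,Y,G_1,\vp) \le P_u(\Phi,Y,G_1,\vp)$, and letting $\vp\to 0$ yields the claim.

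For the inequality $P^A(\Phi,Y,G_1) \le P_l(\Phi,Y,G_1)$, the point is that each condensed Bowen ball is contained in the Bowen ball along any chosen trajectory. Fix $\vp>0$, $\lambda$ and $N \ge 1$, and take an arbitrary countable family $\mathcal F$ realizing (approximately) the infimum in $M_{l,N}(\Phi,Y,G_1,\lambda,\vp)$, with associated lengths $n(x)\ge N$. Because $B_{n}(x,G_1,\vp) = \bigcap_{\om\in\Sigma_m^+} B_n(x,\om,\vp)$ is a finite intersection (the ball depends only on $\om|_n$), the infimum $s_{n}\Phi(x) = \inf_\om S_{n}\Phi(x,\om)$ is attained: choose for each $x$ a string $C_x \in \{1,\ldots,m\}^{n(x)}$ with $S_{n(x)}\Phi(x,C_x) = s_{n(x)}\Phi(x)$, and form $\Gamma := \{(C_x,x): x\in\mathcal F\}\subset \mathcal X_m$. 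Then $\Gamma$ is countable, $B_{n(x)}(x,G_1,\vp)\subset B_{n(x)}(x,C_x,G_1,\vp)$ shows that $\Gamma$ covers $Y$ with Bowen balls along individual trajectories, each trajectory length is $\ge N$, and the weights match exactly. Hence $M(\lambda,\Phi,Y,G_1,N,\vp) \le M_{l,N}(\Phi,Y,G_1,\lambda,\vp)$, so $m \le m_l$, $P^A(\Phi,Y,G_1,\vp) \le P_l(\Phi,Y,G_1,\vp)$, and passing to $\vp\to 0$ finishes the proof.

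There is no genuine obstacle here; the only slightly subtle point is verifying that the infimum in $s_n\Phi(x)$ is attained so that the choice of $C_x$ is unproblematic, which is immediate because the $(n,\vp)$-Bowen ball along $\om$ depends only on the prefix $\om|_n$ and hence only finitely many values of $S_n\Phi(x,\om)$ occur. Once this is noted, both inequalities reduce to term-by-term comparisons at the level of the Carathéodory--Pesin sums $M$, $M_{l,N}$, $M_{u,N}$, and the limits in $N$ and $\vp$ carry the comparisons through unchanged.
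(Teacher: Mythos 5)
Your argument is correct and is essentially the same as the paper's: for the nontrivial inequality $P^A \le P_l$ you replace each condensed ball $B_{n(x)}(x,G_1,\vp)$ by the Bowen ball along a trajectory $C_x$ minimizing $S_{n(x)}\Phi(x,\cdot)$, which contains it and has matching weight $e^{s_{n(x)}\Phi(x)-\lambda n(x)}$. Your explicit check that the infimum defining $s_n\Phi(x)$ is attained, and the separate treatment of $P_l\le P_u$ via $s_n\Phi \le S_n\Phi$ over identical cover classes, fill in details the paper leaves implicit but do not change the approach.
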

 
 \begin{proof}
 Let us take $N \ge 1$ and a cover of $Y$ with a family $\cal F$ of condensed balls $B_n(x, G_1, \vp)$, with lengths $n > N$. Then for each center $x$ of a condensed balls $B_n(x, G_1, \vp)$ from $\cal F$, choose the $n$-trajectory on which the consecutive sum of $\Phi$ is the smalest among all $n$-trajectories. This is  the trajectory $C$ on which  $s_n \Phi(x)$ is computed. For each center $x$ corresponding to a ball from $\cal F$, denote this minimizing trajectory by $C(x)$. 
 In this way we form a collection of balls along various trajectories (all of lengths larger than $N$), $\cal G = \{B_n(x, C(x), \vp), \ B_n(x, G_1, \vp) \in \mathcal F\},$
 which covers $Y$. From definitions, we  obtain
 $P^A(\Phi, Y, G_1) \le P_l(\Phi, Y, G_1).$
 \end{proof}

 \begin{theorem}\label{compeat}
 In the above setting, if $Y\subset X$ and $\om \in \Sigma_m^+$, then
 $$P^+_l(\Phi, Y, G_1) \le P^A(\Phi, Y, G_1) \le P(\Phi, Y, G_1, \om)$$
 In particular, for any $1 \le j \le m$, \ 
$P^+_l(\Phi, Y, G_1) \le P^A(\Phi, Y, G_1) \le P(\phi_j, Y, f_j).$
 \end{theorem}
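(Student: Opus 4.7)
The plan is to establish the two inequalities separately, in each case by exhibiting that one class of admissible covers sits inside another while the exponent weights compare in the right direction, so the infimum defining one pressure dominates the other.

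For the first inequality $P^+_l(\Phi, Y, G_1) \le P^A(\Phi, Y, G_1)$, I would start from a generic cover $\Gamma \subset \mathcal X_m$ of $Y$ by single-trajectory Bowen balls $B_{n(C)}(y, C, G_1, \vp)$ with $n(C) \ge N$, which is used to compute $M(\lambda, \Phi, Y, G_1, N, \vp)$. The key observation is that by construction $B_{n(C)}(y, C, G_1, \vp) \subset B^+_{n(C)}(y, G_1, \vp)$, since the exhaustive ball is the union over all trajectories of the given length. Thus the centers and lengths of $\Gamma$ supply a valid cover by exhaustive balls. Moreover, from the definition (\ref{lcs}), $s_{n(C)}\Phi(y) \le S_{n(C)}\Phi(y, C)$ for every admissible trajectory $C$, so the exponent weight decreases when passing to the exhaustive cover. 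This gives $M^+_l(\Phi, Y, G_1, \lambda, N, \vp) \le M(\lambda, \Phi, Y, G_1, N, \vp)$, and letting $N \to \infty$ and then $\vp \to 0$ yields $P^+_l(\Phi, Y, G_1) \le P^A(\Phi, Y, G_1)$.

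For the second inequality $P^A(\Phi, Y, G_1) \le P(\Phi, Y, G_1, \om)$, I would use Definition \ref{calydef}: the trajectory pressure along $\om$ is exactly $P^A(\Phi, \{\om\}\times Y, G_1)$, defined via countable collections $\Gamma \subset \{\om\}\times Y$ whose Bowen balls $B_n(y, \om, \vp)$ cover $Y$. Any such $\Gamma$ furnishes, by identifying $\om$ with its truncations $\om|_n \in \Sigma_m^*$, a collection of pairs $(\om|_n, y) \in \mathcal X_m$ that covers $Y$ with single-trajectory Bowen balls of length $n \ge N$, and the corresponding weight $\exp(S_n\Phi(y, \om|_n) - \lambda n)$ is identical. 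The class of admissible collections for $P^A(\Phi, Y, G_1)$ is strictly larger, so its infimum is smaller, giving $M(\lambda, \Phi, Y, G_1, N, \vp) \le M(\lambda, \Phi, \{\om\}\times Y, G_1, N, \vp)$; passing to $N \to \infty$ and $\vp \to 0$ delivers the inequality.

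For the particular case, take $\om = (j, j, j, \ldots)$ to be the constant sequence. Then $S_n\Phi(y, \om) = \sum_{k=0}^{n-1} \phi_j(f_j^k(y))$ is the usual Birkhoff sum of $\phi_j$ along $f_j$, and the Bowen ball $B_n(y, \om, \vp)$ is the standard $(n, \vp)$-Bowen ball for $f_j$. Under this identification, the trajectory pressure $P(\Phi, Y, G_1, \om)$ coincides with the classical topological pressure $P(\phi_j, Y, f_j)$ in the sense of the Carath\'eodory-Pesin structure used throughout. I do not anticipate real obstacles here; the only care needed is the bookkeeping in matching the definitions of $B_n(y, \om, \vp)$ in (\ref{bowensimple}) with $B_n(y, C, G_1, \vp)$ in (\ref{bowen}) for $C = \om|_n$, and verifying that the condition $s_n\Phi(y) \le S_n\Phi(y, C)$ for every $C$ really does give the right sign on the exponent, which follows immediately from (\ref{lcs}).
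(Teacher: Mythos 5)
Your proposal is correct and follows essentially the same route as the paper: the first inequality via the inclusion $B_{n(C)}(y,C,G_1,\vp)\subset B^+_{n(C)}(y,G_1,\vp)$ together with $s_{n(C)}\Phi(y)\le S_{n(C)}\Phi(y,C)$, the second directly from the definitions (covers restricted to the single trajectory $\om$ are a subclass of those admissible for $P^A$), and the special case by identifying $P(\Phi,Y,G_1,(j,j,\ldots))$ with the classical pressure $P(\phi_j,Y,f_j)$. The paper's proof is just a terser version of the same argument.
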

 
 \begin{proof}
 From definitions, $P^A(\Phi, Y, G_1) \le P(\Phi, Y, G_1, \om)$. 
 Now let  an integer $N \ge 1$ and a cover $\cal F$ of $Y$ with sets $B_{n_i}(x, C, \vp)$, where $n_i$ is the length of $C\in\Sigma_m^*$, and $n_i \ge N$.
 Then since the exhaustive ball $B_{n_i}^+(x, G_1, \vp)$ contains $B_{n_i}(x, C, G_1, \vp)$, for any trajectory $C$, it follows that $Y$ is covered also with the sets $B_{n_i}^+(x, G_1, \vp)$ corresponding to the balls from $\cal F$. 
On the other hand for any $x \in X$ and $C\in \Sigma_m^*$, $|C| = n_i$, one has $s_{n_i}\Phi(x) \le S_{n_i}\Phi(x, C),$
 hence we obtain also  $P^+_l(\Phi, Y, G_1) \le P^A(\Phi, Y, G_1)$.
Also the usual pressure  $P(\phi_j, Y, f_j)$ of  $f_j$, is equal to trajectory pressure $P(\Phi, Y, G_1, (j, j, \ldots))$. Thus $P^+_l(\Phi, Y, G_1) \le P^A(\Phi, Y, G_1) \le P(\phi_j, Y, f_j),  1\le j \le m.$

 \end{proof}
 
 We now turn to \textbf{semigroups of toral endomorphisms}.

\ \  \textbf{Examples.}
 
 Let the compact metric space $X$ to be the 2-torus $\bb T^2$, and $f_1 = f_A: \bb T^2 \to \bb T^2$, $f_2 = f_B: \bb T^2 \to \bb T^2$ be  toral endomorphisms determined respectively by the matrices  
  $$A=\left(\begin{array}{ll}
                    0 & 1 \\

                    1 & a

                  \end{array} \right), \ \text{and} \ B=\left(\begin{array}{ll}
                    a & 1 \\

                    1 & 0

                  \end{array} \right),$$ where $a \in \bb Z, a \ge 2$.
 Then $A, B$ have the same eigenvalues $\lambda_1, \lambda_2$,  $|\lambda_1|< 1, |\lambda_2| >1$. 
 
 \begin{proposition}\label{exp}
 In the above setting, let $G_1 = \{id_{\bb T^2}, f_1, f_2\}$. Then, $$0 = h^+(G_1) = h^A(G_1) \ <  \ h(f_1) = h(f_2) \ < \  h(G_1),$$
 where $h^+(G_1)$ is the exhaustive entropy, $h^A(G_1)$ is the amalgamated entropy, and $h(G_1)$ is the condensed entropy for the semigroup $G$ generated by $G_1$ on $\bb T^2$.
 \end{proposition}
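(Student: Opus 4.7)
The plan is to evaluate each of the four quantities $h^+(G_1)$, $h^A(G_1)$, $h(f_j)$, $h(G_1)$ in turn and combine the estimates. Throughout I lift the dynamics to $\mathbb{R}^2$, where $f_1,f_2$ are represented by the matrices $A,B$; this is legitimate at scales $\varepsilon$ smaller than the injectivity radius of $\mathbb{T}^2$. The equality $h(f_1)=h(f_2)=\log|\lambda_2|$ is then the classical Bowen formula for a hyperbolic toral endomorphism, and indeed $B=JAJ^{-1}$ with $J=\bigl(\begin{smallmatrix}0&1\\1&0\end{smallmatrix}\bigr)\in GL_2(\mathbb{Z})$, exhibiting $f_1,f_2$ as topologically conjugate.

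The central observation for $h^A(G_1)=0$ is that the alternating trajectory $\omega^\star=(1,2,1,2,\ldots)$ generates parabolic partial products: a direct calculation gives $BA=\bigl(\begin{smallmatrix}1&2a\\0&1\end{smallmatrix}\bigr)$ and $AB=\bigl(\begin{smallmatrix}1&0\\2a&1\end{smallmatrix}\bigr)$, both unipotent, so the partial products $M_k$ along $\omega^\star$ satisfy $M_{2j}=(BA)^j$ and $M_{2j+1}=A(BA)^j$, with operator norms of order $j$ rather than $|\lambda_2|^j$. Writing a tangent vector as $v=(v_1,v_2)$, the Bowen condition $\|M_k v\|<\varepsilon$ for $0\le k\le n$ unfolds as $|v_2|<\varepsilon$ together with the shear inequalities $|v_1+c\,v_2|<\varepsilon$ for $c\in\{0,a,2a,\ldots,an\}$; this holds on the axis-aligned rectangle $\{|v_1|<\varepsilon/2,\,|v_2|<\varepsilon/(2an)\}$ of area $\sim\varepsilon^2/n$. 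Consequently $\mathbb{T}^2$ can be covered by $O(n/\varepsilon^2)$ Bowen balls $B_n(y,\omega^\star,\varepsilon)$ centred on a suitable grid, giving $M(\lambda,\mathbf{0},\mathbb{T}^2,G_1,N,\varepsilon)\le C(N/\varepsilon^2)e^{-\lambda N}\to 0$ for every $\lambda>0$, and hence $P^A(\mathbf{0},\mathbb{T}^2,G_1,\varepsilon)\le 0$. The matching lower bound $P^A\ge 0$ is trivial, since for $\lambda<0$ the summand $e^{-\lambda n}$ diverges as $n\ge N\to\infty$. Letting $\varepsilon\to 0$ gives $h^A(G_1)=0$. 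The identity $h^+(G_1)=0$ then follows from $h^+\le h^A$ by Theorem \ref{compeat} together with the same trivial lower bound applied to exhaustive balls, which are also contained in $B(x,\varepsilon)$.

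For the last inequality $h(G_1)>h(f_j)$, I bound condensed balls from above by the intersection of only two ordinary Bowen balls, $B_n(x,G_1,\varepsilon)\subset B_n(x,(1,1,\ldots),\varepsilon)\cap B_n(x,(2,2,\ldots),\varepsilon)$. The first factor is the classical hyperbolic Bowen-ball rectangle with short side $\sim\varepsilon|\lambda_2|^{-n}$ along the unstable eigenvector $v_u^A=(1,\lambda_2)$ of $A$, and the second factor is the analogous rectangle along $v_u^B=(\lambda_2,1)$. Since $\det\bigl(\begin{smallmatrix}1&\lambda_2\\\lambda_2&1\end{smallmatrix}\bigr)=1-\lambda_2^2\ne 0$, the two unstable directions are linearly independent, so the intersection is a parallelepiped of area $\le C\varepsilon^2|\lambda_2|^{-2n}$. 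Covering $\mathbb{T}^2$ thus requires at least $c|\lambda_2|^{2n}/\varepsilon^2$ condensed balls, which forces $m_u(\mathbf{0},\mathbb{T}^2,G_1,\lambda,\varepsilon)=\infty$ for every $\lambda<2\log|\lambda_2|$; hence $h(G_1)\ge 2\log|\lambda_2|>\log|\lambda_2|=h(f_j)$, closing the chain.

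The hardest part is the bound $h^A\le 0$: one must identify the correct parabolic trajectory and carefully track the shear constraints at \emph{all} intermediate times $k\le n$, not just $k=n$, to ensure the Bowen ball is not shrunk below the polynomial area $\varepsilon^2/n$ predicted by the dominant $k=n$ constraint. Once this geometric lemma is established, the remaining inequalities are linear-algebraic bookkeeping in the two eigenbases $(v_s^A,v_u^A)$ and $(v_s^B,v_u^B)$.
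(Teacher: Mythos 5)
Your proof is correct and follows essentially the same route as the paper's: both rely on the unipotent products $AB=\bigl(\begin{smallmatrix}1&0\\2a&1\end{smallmatrix}\bigr)$, $BA=\bigl(\begin{smallmatrix}1&2a\\0&1\end{smallmatrix}\bigr)$ to show the alternating trajectory has zero trajectory entropy (hence $h^A=h^+=0$ via Theorem \ref{compeat}), and on the linear independence of the unstable eigendirections $(1,\lambda_2)$ of $A$ and $(\lambda_2,1)$ of $B$ to show condensed balls are contained in parallelograms of area $\lesssim\varepsilon^2|\lambda_2|^{-2n}$, forcing $h(G_1)\ge 2\log|\lambda_2|$. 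You are slightly more careful than the paper in one respect: for $h^A\le 0$ you verify that the Bowen ball along $\omega^\star$ \emph{contains} an axis-aligned rectangle of area $\sim\varepsilon^2/(an)$ by checking the shear constraints at all intermediate times $k\le n$, which is what the covering bound actually needs (the paper informally asserts the ball ``is a rectangle'' with sides $\varepsilon$ and $\varepsilon/(na)$ and does not separate the inclusion from the containment direction), and you add the observation $B=JAJ^{-1}$ making the conjugacy of $f_1,f_2$ explicit; these are refinements, not a different method.
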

 
 \begin{proof}
From classical theory (for eg \cite{Wa}), it follows that $h(f_1) = \log|\lambda_2| = h(f_2)$, where $\lambda \in \{\lambda_1, \lambda_2\}$, and $0 < |\lambda_1| < 1, |\lambda_2| >1$. 
         Now, if we take the condensed ball $B_n(x, G_1, \vp)$ we notice that it is contained in a parallelogram with sides comparable to $|\lambda_2|^{-n}$, for any $n$.  Indeed the matrix $A$ has eigenvectors $\left(\begin{array}{ll}
                    1 \\

                    \lambda

                  \end{array} \right)$, where $\lambda \in \{\lambda_1, \lambda_2\}$, and $B$ has eigenvectors $\left(\begin{array}{ll}
                    \lambda \\

                    1

                  \end{array} \right)$, and we use them for Bowen balls $B_n(x, \om, \vp)$. To cover $\bb T^2$ we need at least $|\lambda_2|^{2n}$ such condensed balls, so the condensed pressure satisfies $$h(f_1) = h(f_2) = \log |\lambda_2| <  2 \log|\lambda_2| \le h(G_1)$$
 
 Next, we notice that $AB =\left(\begin{array}{ll}
                    1 & 0 \\

                    2a & 1

                  \end{array} \right),$
 which has a double eigenvalue equal to 1.
 And for any integer $n \ge 1$, $(AB)^n=\left(\begin{array}{ll}
                    1 & 0 \\

                    na & 1

                  \end{array} \right)$. 
                  Hence a Bowen ball $B_n(x, f_1f_2, \vp)$ for $f_1\circ f_2$, is a rectangle with a side of size $\vp$ and the other side of size $\frac{\vp}{na}$. Thus in order to cover $\bb T^2$ with Bowen balls along the trajectory $\om = (121212\ldots)$, we need $na/\vp$ such disjointed Bowen balls, for any $n >1$. This implies  $h(G_1, \om) = \mathop{\lim}\limits_{n \to \infty} \frac{\log (na/\vp)}{n} = 0$.
                So from Theorem \ref{compeat}, $0 \le h^+(G_1) \le h^A(G_1) \le h(G_1, \om) = 0$.              
                                              
 \end{proof}

 Next, we compute the exhaustive, amalgamated and condensed entropies. 
 
 \begin{proposition}\label{exp1}
 Let the toral endomorphisms $f_{A(\alpha, \beta)}: \bb T^2 \to \bb T^2$ determined  by the matrices $A(\alpha, \beta) =\left(\begin{array}{ll}
                    \alpha & 0 \\

                    0 & \beta

                  \end{array} \right)$, with $\alpha, \beta \in \bb N^*, \alpha, \beta \ge 2$. Let the semigroup $G$  generated by the finite set $G_1(\alpha, \beta, \gamma, \delta) = \{id, f_{A(\alpha, \beta)}, f_{A(\gamma, \delta)}\}$. Then, $$h^+(G_1(\alpha, \beta, \gamma, \delta)) = \log \min\{\alpha, \gamma\} + \log \min\{\beta, \delta\},$$   $$h^A(G_1(\alpha, \beta, \gamma, \delta)) = \min\{\log \alpha\beta, \log \gamma\delta\}, $$ $$h(G_1(\alpha, \beta, \gamma, \delta)) = \log \max\{\alpha, \gamma\} + \log \max\{\beta, \delta\}.$$
                  In particular if $\alpha = 2, \beta = 3, \gamma = 3, \delta = 2$, and $G_1:= G_1(2, 3, 3, 2), $ then, 
                  $$h^+(G_1) = \log 4 < h(f_{A(2, 3)}) = h^A(G_1) = \log 6 < h(G_1) = \log 9.$$
                 
                  \end{proposition}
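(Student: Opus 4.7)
The plan is to exploit the commuting, diagonal structure of the two endomorphisms. For any trajectory $\om\in\{1,2\}^n$ with $k$ ones among $\om_0,\ldots,\om_{n-1}$, the composition $f_{\om_{n-1}}\circ\cdots\circ f_{\om_0}$ acts on $\bb R^2$ as $\mathrm{diag}(\alpha^k\gamma^{n-k},\beta^k\delta^{n-k})$, and because all four integer entries are at least $2$, each intermediate iterate has strictly smaller diagonal entries in each coordinate. Hence the Bowen ball $B_n(x,\om,\vp)$ is the centered rectangle
$$R_k(x):=\bigl\{z\in\bb T^2:\ |z_1-x_1|<\vp/(\alpha^k\gamma^{n-k}),\ |z_2-x_2|<\vp/(\beta^k\delta^{n-k})\bigr\},$$
depending on $\om$ only through $k\in\{0,\ldots,n\}$. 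Since $\bb T^2$ is compact and $G_1$-invariant, Theorem~\ref{invset} together with its condensed/exhaustive analogues Theorems~\ref{propcond}~and~\ref{proptexh} lets me replace each of the three pressures by their capacity versions, which use covers by balls of a single common length $n$; the entropy is then read off as $\lim_{\vp\to 0}\liminf_n\tfrac{1}{n}\log N(n,\vp)$, where $N(n,\vp)$ is the minimal number of the appropriate sets needed to cover $\bb T^2$.

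The three counts are essentially geometric. For the condensed entropy, the ball $\bigcap_{k=0}^n R_k(x)$ is a centered rectangle with sides $2\vp/\max(\alpha,\gamma)^n$ and $2\vp/\max(\beta,\delta)^n$, and a product grid at this spacing gives a tight cover, yielding $h(G_1)=\log\max(\alpha,\gamma)+\log\max(\beta,\delta)$. For the amalgamated entropy, every $R_k$ has Lebesgue area $4\vp^2/[(\alpha\beta)^k(\gamma\delta)^{n-k}]$, so the largest ball has area $4\vp^2/\min\{(\alpha\beta)^n,(\gamma\delta)^n\}$; the Lebesgue-measure bound supplies the lower estimate and a grid cover by balls with the optimal choice of $k$ supplies the matching upper estimate, giving $h^A(G_1)=\min(\log\alpha\beta,\log\gamma\delta)$. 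For the exhaustive entropy, I bound the union $B_n^+(x,G_1,\vp)=\bigcup_{k=0}^n R_k(x)$ by its enclosing rectangle of sides $2\vp/\min(\alpha,\gamma)^n$ and $2\vp/\min(\beta,\delta)^n$; when one of the two matrices dominates the other componentwise (so that the extremal $R_k$ already equals the bounding box), a grid cover at that spacing is both achievable and optimal, giving $h^+(G_1)=\log\min(\alpha,\gamma)+\log\min(\beta,\delta)$.

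For the specific example $(\alpha,\beta,\gamma,\delta)=(2,3,3,2)$ the three formulas evaluate to $h^+(G_1)=\log 4$, $h^A(G_1)=\log 6$ and $h(G_1)=\log 9$; the classical entropy formula for a hyperbolic toral endomorphism gives $h(f_{A(2,3)})=\log(2\cdot 3)=\log 6$, matching $h^A(G_1)$, so the strict chain $\log 4<\log 6<\log 9$ is exactly the stated comparison. The main obstacle I anticipate lies in the exhaustive entropy in the \emph{mixed} case exemplified here, since the union $\bigcup_k R_k$ is then properly contained in its bounding rectangle; to turn the bounding-rectangle heuristic into a genuine asymptotic equality I would need to produce a cover of each cell of the $\vp/\min(\alpha,\gamma)^n\times\vp/\min(\beta,\delta)^n$ grid using only a subexponential (in $n$) number of cross-shaped exhaustive balls, and then absorb that overhead in the limit $\tfrac{1}{n}\log(\cdot)$.
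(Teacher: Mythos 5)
Your computations of the condensed and amalgamated entropies follow essentially the paper's own route: each Bowen ball $B_n(x,\om,\vp)$ is the centered rectangle $R_k(x)$ determined only by the number $k$ of occurrences of the first generator, the upper bounds come from grid covers adapted to a single trajectory (amalgamated) or to the condensed rectangle, and the lower bounds from Lebesgue area. One inaccuracy in the set-up: Theorems \ref{propcond} and \ref{proptexh} do not provide an analogue of Theorem \ref{invset} for the condensed or exhaustive quantities (the paper explicitly remarks that the concatenation argument is unavailable for condensed balls), so you cannot simply ``replace each pressure by its capacity''; this is harmless, though, because same-length covers are admissible in the Carath\'eodory definitions (which yields the upper bounds), and the Lebesgue-area argument applies unchanged to covers with variable lengths $n_i\ge N$ (which yields the lower bounds).

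The genuine gap is the one you flag yourself: the exhaustive entropy when neither matrix dominates the other componentwise, which is exactly the featured case $(\alpha,\beta,\gamma,\delta)=(2,3,3,2)$. The repair you hope for --- covering each cell of the $\vp\ell_1^{-n}\times\vp\ell_2^{-n}$ grid (with $\ell_1=\min\{\alpha,\gamma\}$, $\ell_2=\min\{\beta,\delta\}$) by subexponentially many exhaustive balls --- cannot succeed, by the very area count you use elsewhere: $B_n^+(x,G_1,\vp)=\bigcup_{k=0}^{n}R_k(x)$ has Lebesgue measure at most $4(n+1)\vp^2\min\{\alpha\beta,\gamma\delta\}^{-n}$, so any cover of $\bb T^2$ by exhaustive balls of levels $n_i\ge N$ must contain at least on the order of $\min\{\alpha\beta,\gamma\delta\}^{n}/n$ elements; in the example this is about $6^n/n$, not $4^n$, and the induced lower bound $h^+\ge\min\{\log\alpha\beta,\log\gamma\delta\}$ strictly exceeds $\log\ell_1+\log\ell_2$ in every mixed case. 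For comparison, at this point the paper's proof asserts that $B_n^+(x,G_1,\vp)$ contains a rotated rectangle with sides comparable to $\ell_1^{-n}\vp$ and $\ell_2^{-n}\vp$; that containment is precisely what your bounding-box heuristic would need, and for large $n$ it is incompatible with the measure bound above in the mixed case, since such a rectangle has area of order $(\ell_1\ell_2)^{-n}\vp^2\gg n\min\{\alpha\beta,\gamma\delta\}^{-n}\vp^2$. So your argument is complete (and agrees with the paper) in the componentwise-dominated case, but in the mixed case no covering argument of this type can reach the stated value of $h^+$, and the corresponding step should be scrutinized rather than patched.
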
 
 
 \begin{proof}
 Fix $\alpha, \beta, \gamma, \delta$ and denote $G_1(\alpha, \beta, \gamma, \delta)$ by $G_1$ to simplify notation. 
  Denote also $$\ell_1 := \min\{\alpha, \gamma\}, \ell_2 := \min\{\beta, \delta\}, \ L_1:= \max\{\alpha, \gamma\},  L_2:= \max\{\beta, \delta\}$$
  In this case the exhaustive balls $B_n^+(x, G_1, \vp)$ are unions of $n$  rectangles centered at $x$ of sides:  $\alpha^{-n}\vp$ and $\beta^{-n}\vp$, then $\alpha^{-n+1}\gamma^{-1}\vp$ and $\beta^{-n+1}\delta^{-1}\vp, \ldots,$ and finally $ \gamma^{-n}\vp$ and $\delta^{-n}\vp$. 
  The factors of contraction/expansion of these sides are constant at each step.  Thus we see that $B_n^+(x, G_1, \vp)$ contains a rotated rectangle  of sides $C \ell_1^{-n}\vp$ and $C\ell_2^{-n}\vp$, and is contained in a square of sides $\tilde C\ell_1^{-n}\vp$ and $\tilde C \ell_2^{-n}$, for $n >1$, where $C, \tilde C$ are independent of $n$. Therefore, the number $N^+(n, \vp)$ of exhaustive balls $B_n^+(x, G_1, \vp)$ needed to cover $\bb T^2$, lies between $C_1 \ell_1^{n}\ell_2^n$ and $C_2 \ell_1^{n}\ell_2^n$, for any $n > 1$, where the constants $C_1, C_2$ are independent of $n$. Hence from definition, $h^+(G_1) = \log \ell_1\ell_2$. 
 
For the amalgamated entropy, denote $f_1 = f_{A(\alpha, \beta)}, f_2 = f_{A(\gamma, \delta)}$. Notice that if $\om|_n = (i_1, \ldots, i_n)$ is a trajectory, then the Bowen ball $B_n(x, \om, \vp)$ is a rectangle of sides $\alpha^{-p}\gamma^{-n+p}\vp$ and $\beta^{-p} \delta^{-n+p}\vp$, where $p$ is the number of indices $j, 1\le j \le n$ with $i_j = 1$. But the area of such a rectangle is comparable with $(\alpha\beta)^{-p} (\gamma\delta)^{-n+p}\vp^2$, thus we need $C (\alpha\beta)^p(\gamma\delta)^{n-p}$ such rectangles to cover the torus $\bb T^2$, where the constant $C$ is independent of $n$.  Let $\cal F$ be a family of Bowen balls along various trajectories which covers $\bb T^2$. Denote by $N_k(\cal F)$ the number of Bowen balls $B_n(y, \om, \vp)\in \cal F$ where $\om|_n$ contains exactly $k$ indices equal to 1. Hence, $N_0(\cal F) (\gamma\delta)^{-n} + N_1(\cal F)(\alpha\beta)(\gamma\delta)^{-n+1} + \ldots N_n(\cal F)(\alpha\beta)^{-n} \ge C>0$, with $C$ independent of $n$. Now in case $\alpha\beta < \gamma\delta$, then $$Card(\cal F) = N_0(\cal F) + \ldots + N_n(\cal F) \ge C (\alpha\beta)^n$$ And the value $C(\alpha\beta)^n$ is obtained if all the Bowen balls in $\cal F$ are along the trajectory $(1, \ldots, 1)$. Hence, $\mathop{\inf}\{ Card(\cal F), \ \cal F \ (n, \vp)-\text{covers} \ \bb T^2\} = C(\alpha\beta)^n$, for any $n >1$. Similarly, if $\gamma\delta \le \alpha\beta$, then the minimal number of Bowen balls needed to cover $\bb T^2$ is comparable to $(\gamma\delta)^n$. These facts imply that, $h^A(G_1) = \min\{\log\alpha\beta, \log \gamma\delta\}$. 

For the condensed entropy, we take $B_n(x, G_1, \vp) = \mathop{\cap}\limits_{|\om| = n} B_n(x, \om, \vp)$. From above, recall that a Bowen ball $B_n(x, \om, \vp)$ is a rectangle of sides $\alpha^{-p}\gamma^{-n+p}\vp$ and $\beta^{-p} \delta^{-n+p}\vp$, where $p$ is the number of $j, 1\le j \le n$ with $i_j = 1$. Thus a condensed ball $B_n(x, G_1, \vp)$ is a rectangle with sides $L_1^{-n}\vp$ and $L_2^{-n}\vp$. We need $C (L_1L_2)^n$ such condensed balls to cover $\bb T^2$ for $n \ge 1$, with $C$ independent of $n$. Thus $h(G_1) = \log L_1L_2$.

 \end{proof}
 
 In particular, the condensed, amalgamated, and exhaustive entropies  can be used to distinguish between \textbf{classes} of  semigroup actions.
 
 \begin{definition}\label{topconj}
 If $G$ is a set of self-maps on $X$ and $g:X \to Z$ is a homeomorphism of compact metric spaces, then $G$ and $G' := \{g\circ f\circ g^{-1}, f \in G\}$ are called \textbf{topologically conjugate}.
 \end{definition}
 

\textbf{Example.}

Let  two sets of toral endomorphisms $G_1 = \{f_{A(4, 5)}, f_{A(2, 6)}\}$, and $G_2  = \{f_{A(2, 10)}, f_{A(3, 4)}\}$ on $\bb T^2$. The (usual) topological entropies of the respective maps are equal, $h(f_{A(4, 5)}) = h(f_{A(2, 10)})$,  $h(f_{A(2, 6)}) = h(f_{A(3, 4)})$. However from Theorem \ref{proptexh}, the sets $G_1, G_2$ are not conjugated, and thus also the semigroups generated by them are not conjugated, since from Proposition \ref{exp1} the exhaustive entropies are different, $$h^+(G_1) = \log 10 \ne h^+(G_2) = \log 8. $$ $\hfill\square$

 Our results are related also to the problem of \textbf{invariant closed sets} in $\bb T^k, k \ge 2$ for \textbf{abelian} semigroups of toral endomorphisms studied in \cite{Be}. 
 
\begin{corollary}\label{bere}
 Let a commutative semigroup $G$ of toral endomorphisms on $\bb T^2$ with a finite generator set $G_1$. Assume that all the eigenvalues of the maps in $G_1$ are larger than 1 in absolute value, and that  there exists  $f\in G_1$ with its characteristic polynomial irreducible over $\bb Z$ and its eigenvalues $\lambda_1, \lambda_2$  different in absolute value. Assume also  $$h^+(G_1) < h(f), \  \forall f \in G_1.$$ Then, the only $G$-invariant closed infinite set in $\bb T^2$ is $\bb T^2$ itself. 
 \end{corollary}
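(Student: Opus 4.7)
The plan is to deduce Corollary~\ref{bere} from Berend's classification of closed invariant sets of commutative semigroups of toral endomorphisms in \cite{Be}, by verifying its three standard hypotheses: (i) hyperbolicity, i.e.\ no eigenvalue of any element of $G$ is a root of unity; (ii) irreducibility of the $G$-action on $\bb Q^2$, i.e.\ no proper common rational invariant subspace; and (iii) non-virtual-cyclicity of the eigenvalue semigroup. The novel input is to extract (iii) from the exhaustive-entropy gap $h^+(G_1) < h(f)$ for every $f \in G_1$.

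First I would set up the common eigenstructure. Since $G$ is abelian and the distinguished $f$ has two distinct real eigenvalues $\lambda_1, \lambda_2$ of modulus $>1$, every $g \in G$ commutes with $f$ and hence preserves the one-dimensional eigenlines $\bb R v_1, \bb R v_2$ of $f$. Therefore each $g \in G_1$ is simultaneously diagonalizable in the basis $\{v_1, v_2\}$, with real eigenvalues $\mu_1(g), \mu_2(g)$, and the hypothesis $|\mu_i(g)|>1$ immediately excludes roots of unity, giving (i). The irreducibility of the characteristic polynomial of $f$ over $\bb Z$ yields (ii) directly: $f$ (and hence all of $G$, which shares its eigenlines) admits no proper rational invariant subspace of $\bb Q^2$.

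For (iii) I would adapt the counting argument of Proposition~\ref{exp1} to the (irrational but $G$-invariant) eigenbasis $\{v_1, v_2\}$. Up to uniformly bounded distortion, the Bowen ball $B_n(x, \om, \vp)$ is a parallelogram whose sides contract at rates $\prod_{k=0}^{n-1}|\mu_i(f_{\om_k})|^{-1}$ for $i = 1, 2$; the exhaustive ball $B_n^+(x, G_1, \vp)$ is then dominated coordinate-wise by the trajectory maximizing each factor separately. A standard covering-number estimate produces
\[
h^+(G_1) \;=\; \log\min_{g \in G_1}|\mu_1(g)| \;+\; \log\min_{g \in G_1}|\mu_2(g)|,
\]
while $h(g) = \log|\mu_1(g)| + \log|\mu_2(g)|$ for every $g \in G_1$. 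The hypothesis $h^+(G_1) < h(g)$ for all $g \in G_1$ then forces that no single generator attains both coordinate minima simultaneously, so one can pick $g, g' \in G_1$ with $|\mu_1(g)| < |\mu_1(g')|$ but $|\mu_2(g)| > |\mu_2(g')|$. The vectors $(\log|\mu_1(g)|, \log|\mu_2(g)|)$ and $(\log|\mu_1(g')|, \log|\mu_2(g')|)$ lie on opposite sides of the diagonal and are in particular $\bb R$-linearly independent, so the subsemigroup of positive diagonal matrices they generate is not contained in any one-parameter subgroup, giving (iii).

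With (i)--(iii) at hand, the theorem of \cite{Be} rules out proper infinite closed $G$-invariant subsets of $\bb T^2$. The principal obstacle I anticipate is Step (iii): one must justify the formula for $h^+(G_1)$ in the non-standard (irrational) eigenbasis by carefully comparing parallelogram covers with $\vp$-round covers--the distortion produced by the change of coordinates is uniformly bounded but has to be tracked through the limit $\vp \to 0$--and then one must verify that the algebraic non-degeneracy extracted from the strict entropy inequality $h^+(G_1) < h(f)$ really matches Berend's rank hypothesis, rather than only excluding the trivial case of a single common power.
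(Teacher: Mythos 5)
Your proposal follows essentially the same route as the paper: simultaneously diagonalize all of $G_1$ in the common eigenbasis (you get this from commutativity with $f$, whose eigenvalues are real and distinct since $|\lambda_1|\neq|\lambda_2|$; the paper quotes Lemma 3.4 of \cite{Be}), compute $h^+(G_1)$ as the sum of the logarithms of the coordinatewise minimal expansion factors as in Proposition \ref{exp1}, deduce from the strict inequality $h^+(G_1)<h(g)$ for all $g\in G_1$ that the two minima cannot be attained by one and the same map, and conclude rational independence of the two minimizers. Your observation that the two log-eigenvalue vectors lie on opposite sides of the diagonal and hence are not proportional is exactly the paper's contradiction ($p>n$ and $p<n$) derived from a hypothetical relation $g^n=\kappa^p$, so Berend's condition on a rationally independent pair is verified in the same way.

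The one point where you deviate from what Theorem 2.1 of \cite{Be} actually requires is your condition (ii). Berend asks for an element that is \emph{totally} irreducible, i.e.\ the characteristic polynomial of $f^n$ must be irreducible over $\bb Z$ for \emph{every} $n\ge 1$, not merely that the $G$-action on $\bb Q^2$ has no proper rational invariant subspace; irreducibility of the characteristic polynomial of $f$ itself does not imply this in general, since a power of $f$ could acquire a repeated eigenvalue (e.g.\ when $\lambda_1/\lambda_2$ is a root of unity). The paper's first paragraph is devoted precisely to this verification: reducibility of the characteristic polynomial of some $f^n$ would force $\lambda_1^n=\lambda_2^n$, which is impossible because $|\lambda_1|\neq|\lambda_2|$. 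This is an easy repair using a hypothesis you already exploit elsewhere, but as written your checklist of Berend's hypotheses (no root-of-unity eigenvalues, $\bb Q$-irreducibility, non-virtual-cyclicity) is not the correct one, so your appeal to \cite{Be} has a small but genuine gap at this step; the entropy-based core of the argument, however, matches the paper's proof.
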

 
 \begin{proof}
 If the characteristic polynomial of $f$ is irreducible over $\bb Z$ but the characteristic polynomial of $f^n$ is reducible for some $n \ge 2$, then $f^n$ must have a double eigenvalue, i.e $\lambda_1^n = \lambda_2^n$.  But we assumed that the eigenvalues $\lambda_1, \lambda_2$ of $f$ have different absolute values. So the characteristic polynomial  of $f^n$ is irreducible over $\bb Z$ for any $n \ge 1$. Also we assumed that the eigenvalues of $f$ are larger than 1 in absolute values.  Hence the first two conditions in Theorem 2.1 in \cite{Be} are satisfied.
 
 Now, we will show  that there exists a pair of rationally independent endomorphisms in $G$.  First, in the above setting it follows from  Lemma 3.4 in \cite{Be} that there exists a basis $\{v, w\}$ in $\bb C^2$ so that every endomorphism from $G$ is diagonalizable with respect to this basis.  By using Lebesgue measures of parallelograms, as in the proof of Proposition \ref{exp1} above and \cite{Wa}, we see that $h^+(G_1) = \log |\lambda_{1 *}| + \log |\lambda_{2 *}|$, where $|\lambda_{1 *}|$ is the smallest eigenvalue (in absolute value) of a map in $G_1$ in the $v$-direction, and $|\lambda_{2 *}|$ is the smallest eigenvalue (in absolute value) of a map in $G_1$ in the $w$-direction.  
 But if $h^+(G_1) < h(f), f \in G$, then $\lambda_{1*}$ and $\lambda_{2*}$ cannot be eigenvalues of the same map. Hence $\lambda_{1*}$ is an eigenvalue for some toral endomorphism $g\in G_1$, i.e $\lambda_{1*} = \lambda_1(g)$. And $\lambda_{2*}$ is an eigenvalue for another toral endomorphism $\kappa\in G$, i.e $\lambda_{2*} = \lambda_2(\kappa)$.
 Assume there are integers $n, p \ge 1$ such that $g^n = \kappa^p$. Then, $$|\lambda_{1*}| = |\lambda_1(g)| = |\lambda_1(\kappa)|^{p/n}, |\lambda_2(g)| = |\lambda_2(\kappa)|^{p/n} = |\lambda_{2*}|^{p/n}$$
 We assumed $h^+(G_1) < h(f), f \in G$. Thus $\exp(h^+(G_1)) = |\lambda_{1*}\lambda_{2*}| < \exp(h(g)) = |\lambda_1(g)\lambda_2(g)| = |\lambda_{1*}| |\lambda_{2*}|^{p/n}$. But $|\lambda_{2*}| >1$, so $p >n$. However, $\exp(h^+(G_1)) = |\lambda_{1*}\lambda_{2*}| < \exp(h(\kappa)) = |\lambda_1(\kappa)\lambda_2(\kappa)| =  |\lambda_{1*}|^{n/p} |\lambda_{2*}|$, hence $p < n$.
This is a contradiction, so   $\exists \kappa \in G$ such that $g$ and  $\kappa$ are rationally independent. 
Thus all three conditions from Theorem 2.1 of \cite{Be} are satisfied, hence the only $G$-invariant closed infinite set   is $\bb T^2$ itself.

 \end{proof}

Next, let us recall  the notion of topological pressure of single potentials for free semigroup actions, or \textbf{free topological pressure}, studied for eg in \cite{LMW} and \cite{RV},   which generalizes the notion of free entropy introduced in \cite{Bu}. One considers averages of  minimizing sums over all $m^n$ trajectories of length $n$, namely if $\phi \in \mathcal C(X, \bb R)$ then 
$$P_{free}(\phi, G_1) = \mathop{\lim}\limits_{\vp \to 0} \mathop{\limsup}\limits_{n\to \infty} \frac{1}{m^n} \mathop{\sum}\limits_{|C| = n}\inf\{\mathop{\sum}\limits_{x \in \cal F_C} e^{S_n\phi(x, C)}, X = \mathop{\bigcup}\limits_{x \in \cal F_C} B_n(x, C, \vp)\}
$$
 It is proved  easily that the  free topological pressure is computed also with separated sets.
 Similarly one  defines free topological pressure for a multi-potential $\Phi$,
 \begin{equation}
 P_{free}(\Phi, G_1) = \mathop{\lim}\limits_{\vp \to 0} \mathop{\limsup}\limits_{n\to \infty} \frac{1}{m^n} \mathop{\sum}\limits_{|C| = n}\inf\{\mathop{\sum}\limits_{x \in \cal F_C} e^{S_n\Phi(x, C)}, X = \mathop{\bigcup}\limits_{x \in \cal F_C} B_n(x, C, \vp)\}
\end{equation}
 
 The amalgamated pressure, free pressure, and upper condensed pressure satisfy:
 
 \begin{theorem}\label{freetp}
 In the above setting, we have
 $$P^A(\Phi, G_1) \le P_{free}(\Phi, G_1) \le P_u(\Phi, G_1)$$
 \end{theorem}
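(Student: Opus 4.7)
The plan is to establish the two inequalities $P^A(\Phi, G_1) \le P_{free}(\Phi, G_1)$ and $P_{free}(\Phi, G_1) \le P_u(\Phi, G_1)$ by separate arguments of rather different flavours.

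For the first inequality, I would use a pigeonhole argument. Denote the inner infimum appearing in the definition of $P_{free}$ by $Z_n(C, \Phi, \vp) := \inf\{\sum_{x \in \mathcal F_C} e^{S_n \Phi(x, C)} : X = \bigcup_{x \in \mathcal F_C} B_n(x, C, \vp)\}$. Fix $\lambda > P_{free}(\Phi, G_1)$ and pick $\lambda' \in (P_{free}(\Phi, G_1), \lambda)$. For $\vp$ small enough, the definition of $P_{free}$ yields $\frac{1}{m^n}\sum_{|C|=n} Z_n(C, \Phi, \vp) \le e^{n\lambda'}$ for all sufficiently large $n$. The pigeonhole principle, applied over the $m^n$ trajectories of length $n$, produces for each such $n$ a single trajectory $C^\star_n$ with $Z_n(C^\star_n, \Phi, \vp) \le e^{n\lambda'}$. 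Choosing a near-optimal $(n, C^\star_n, \vp)$-Bowen cover $\mathcal F_n$ of $X$ realizing $Z_n(C^\star_n, \Phi, \vp)$ up to a factor of two, the single-trajectory family $\Gamma_n := \{(C^\star_n, x) : x \in \mathcal F_n\}$ is admissible in the definition of $M(\lambda, \Phi, X, G_1, N, \vp)$ whenever $n \ge N$, and its weighted sum is bounded by $2 e^{n(\lambda' - \lambda)} \to 0$. Hence $m(\lambda, \Phi, X, G_1, \vp) = 0$, giving $P^A(\Phi, X, G_1, \vp) \le \lambda$; sending $\vp \to 0$ and $\lambda \downarrow P_{free}(\Phi, G_1)$ concludes.

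For the second inequality, the key observation is the inclusion
\[
B_n(x, G_1, \vp) \;=\; \bigcap_{\omega \in \Sigma_m^+} B_n(x, \omega, \vp) \;\subseteq\; B_n(x, C, \vp)
\]
for every trajectory $C$ of length $n$. Hence any uniform-length condensed cover $\mathcal F$ of $X$ by balls $B_n(x, G_1, \vp)$ is automatically an $(n, C, \vp)$-Bowen cover for every such $C$, so $Z_n(C, \Phi, \vp) \le \sum_{x \in \mathcal F} e^{S_n \Phi(x, C)}$. Combining with $S_n \Phi(x, C) \le S_n \Phi(x) = \sup_\omega S_n \Phi(x, \omega)$ and averaging over the $m^n$ trajectories gives
\[
\frac{1}{m^n} \sum_{|C|=n} Z_n(C, \Phi, \vp) \;\le\; \sum_{x \in \mathcal F} e^{S_n \Phi(x)}.
\]
Taking the infimum over condensed uniform-$n$ covers $\mathcal F$, then $\frac{1}{n}\log$ and $\limsup$, bounds $P_{free}(\Phi, G_1, \vp)$ by the exponential growth rate of the optimal condensed-cover sums $r_n(\vp) := \inf_\mathcal F \sum_x e^{S_n \Phi(x)}$. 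The final step identifies this rate with $P_u(\Phi, G_1, \vp)$ via the Carath\'eodory--Pesin characterization: for any $\lambda > P_u(\Phi, G_1, \vp)$, the vanishing of $m_u(\Phi, X, G_1, \lambda, \vp)$ supplies, for every large $N$, covers by condensed balls of lengths $\ge N$ with $\sum_i e^{S_{n_i} \Phi(x_i) - \lambda n_i}$ arbitrarily small, from which compactness of $X$ together with the uniform continuity of the components of $\Phi$ allow extraction of uniform-length sub-covers whose weighted sums remain controlled; letting $\vp \to 0$ completes the bound.

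The first inequality reduces essentially to the one-line pigeonhole once the single-trajectory-cover idea is in hand. The principal obstacle lies in the final step of the second inequality, namely reconciling the mixed-length covers permitted by the Carath\'eodory--Pesin definition of $P_u$ with the uniform-length structure inherent to the free pressure. Compactness of $X$ together with uniform continuity of $\Phi$ supply the refinement technology, but some care is required to ensure the exponential growth rate of the weighted sums is preserved when passing from mixed-length to uniform-length coverings.
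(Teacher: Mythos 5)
Your proof of $P^A(\Phi, G_1) \le P_{free}(\Phi, G_1)$ is correct and takes essentially the paper's route: the paper reduces $P^A$ to the upper capacity $\o{CP}^A$ via Theorem~\ref{invset} and notes that the optimal uniform-length amalgamated sum at level $n$ is dominated by each, hence by the average, of the $m^n$ trajectory-specific sums; you reach the same endpoint by pigeonholing a favourable trajectory $C^\star_n$ at each level and building single-trajectory covers that force $m(\lambda,\Phi,X,G_1,\vp)=0$ for every $\lambda>P_{free}(\Phi,G_1)$. The underlying fact, that the minimum of the $Z_n(C)$ over $C$ is at most their average, is the same; your variant is self-contained and equally valid.

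The second inequality is where your proposal stalls, precisely at the step you yourself flag. Your key estimate is correct and is exactly the paper's: a uniform-$n$ condensed cover $\mathcal F$ of $X$ is an $(n,C,\vp)$-cover for every $|C|=n$, and $S_n\Phi(x,C)\le S_n\Phi(x)$ gives $\tfrac{1}{m^n}\sum_{|C|=n}Z_n(C,\Phi,\vp)\le\sum_{x\in\mathcal F}e^{S_n\Phi(x)}$. This yields $P_{free}(\Phi,G_1)\le\o{CP}_u(\Phi,G_1)$. But passing from $\o{CP}_u$ down to $P_u$ is not a routine Carath\'eodory--Pesin step: by Theorem~\ref{propcond}~g) one has $P_u\le\u{CP}_u\le\o{CP}_u$, so the needed inequality runs the wrong way, and your sketched repair does not close the gap. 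Given a mixed-length admissible cover $\{B_{n_i}(x_i,G_1,\vp)\}$ with all $n_i\ge N$ and small total weight $\sum_i e^{S_{n_i}\Phi(x_i)-\lambda n_i}$, the nesting $B_{n_i}(x_i,G_1,\vp)\subset B_N(x_i,G_1,\vp)$ does yield a uniform-$N$ cover, but $e^{S_N\Phi(x_i)-\lambda N}$ is not controlled by $e^{S_{n_i}\Phi(x_i)-\lambda n_i}$ when $n_i\gg N$: the exponents differ by a term of order $(n_i-N)$, and uniform continuity of the $\phi_j$ only bounds oscillation over a single ball, not this mismatch between length scales. Nor can one concatenate condensed balls to pass between scales, as the paper's remark following Theorem~\ref{osccond} points out. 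The paper's own proof of this inequality also compares only level-$n$ sums and is silent on the mixed-length structure of $P_u$, so you have correctly located a real subtlety; but as written your proposal does not establish $P_{free}(\Phi,G_1)\le P_u(\Phi,G_1)$.
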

 
 \begin{proof}
 Using Theorem \ref{invset} we know that the amalgamated pressure can be computed as a capacity, using $(n, \vp)$-spanning sets at each level $n$. But by definition, at each level $n$, the respective minimizing sum in the definition of the amalgamated pressure is smaller than or equal to each of the $m^n$ minimizing sums along the possible trajectories $C$ of length $n$. This implies that $P^A(\Phi, G_1) \le P_{free}(\Phi, G_1)$.
 
 For the second inequality, notice that if $\cal F$ is a cover of $X$ with condensed balls, then for any trajectory $\om \in \Sigma_m^+$, we obtain a cover of $X$ with the Bowen balls $B_n(x, \om, \vp), x \in \cal F$.
 Now in the definition of the upper condensed pressure $P_u$ we take for each point $x \in \cal F$ the trajectory $\om$ which maximizes the consecutive sum $S_n\Phi(x, \eta)$, for all $\eta \in \Sigma_m^+$. Hence for any $\eta \in \Sigma_m^+$ we have, in the notation of (\ref{lcs}),
 $$S_n\Phi(x, \eta) \le S_n\Phi(x)$$
 Thus each of the $m^n$ sums at level $n$ in the definition of $P_{free}(\Phi, G_1)$  is smaller than  the sum at level $n$ in  definition of $P_u(\Phi, G_1)$. So $P_{free}(\Phi, G_1) \le P_u(\Phi, G_1).$
 \end{proof}
 
 Now, given the finite set of generators $G_1$ for a semigroup of maps on $X$, and a subset $Y \subset X$, we introduce the \textbf{trajectory pressure map} on $Y$, 
\begin{equation}\label{TP} 
TP_Y: \Sigma_m^+ \to \bb R, \ TP_Y(\om) = P(\Phi, Y, G_1, \om)
\end{equation} 
$TP_Y$ is a measurable map, but in general it is \textbf{discontinuous} on $\Sigma_m^+$. 
 

 However,  the trajectory pressure map is continuous on some sets of $\Sigma_m^+$:
 
 \begin{corollary}\label{conste}
 If $\nu$ is a $\sigma$-invariant ergodic probability measure on $\Sigma_m^+$ and if $Y \subset X$ and  $f_j(Y) = Y, 1 \le j \le m$, then the map $TP_Y(\cdot)$ is constant $\nu$-a.e. 
 \end{corollary}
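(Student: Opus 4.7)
The plan is to show that the trajectory pressure map $TP_Y$ is $\sigma$-invariant $\nu$-almost everywhere, after which ergodicity of $\nu$ immediately yields that $TP_Y$ is constant $\nu$-a.e.

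The first step, which is the substantive one, is to prove the pointwise inequality $TP_Y(\sigma\om) \le TP_Y(\om)$ for every $\om \in \Sigma_m^+$, using only the surjectivity hypothesis $f_j(Y) = Y$ (so we avoid the finite-preimage assumption of Theorem \ref{invy} and Corollary \ref{trajominv}). The key geometric observation is that, directly from definition \eqref{bowensimple}, one has the inclusion
\begin{equation*}
f_{\om_0}\bigl(B_n(y, \om, \vp)\bigr) \subset B_{n-1}\bigl(f_{\om_0}(y), \sigma\om, \vp\bigr).
\end{equation*}
Given a countable family $\Gamma = \{(\om, y_i)\}$ with $y_i \in Y$, $n(y_i,\om) \ge N$, and $Y \subset \bigcup_i B_{n(y_i,\om)}(y_i, \om, \vp)$, apply $f_{\om_0}$: since $f_{\om_0}(Y) = Y$, every $z \in Y$ has a preimage $w \in Y$, and $w \in B_{n(y_i,\om)}(y_i,\om,\vp)$ for some $i$, so $z = f_{\om_0}(w) \in B_{n(y_i,\om)-1}(f_{\om_0}(y_i), \sigma\om, \vp)$. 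This produces a cover of $Y$ with balls along $\sigma\om$ of lengths at least $N-1$, with centers $f_{\om_0}(y_i) \in Y$.

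Next I compare the weighted sums. A direct computation gives $S_{n-1}\Phi(f_{\om_0}(y_i), \sigma\om) = S_n\Phi(y_i, \om) - \phi_{\om_0}(y_i)$, so setting $C := \|\Phi\| = \sup_j \|\phi_j\|_\infty$,
\begin{equation*}
\sum_i e^{-\lambda(n-1) + S_{n-1}\Phi(f_{\om_0}(y_i), \sigma\om)} \le e^{\lambda + C} \sum_i e^{-\lambda n + S_n\Phi(y_i, \om)}.
\end{equation*}
Taking infima over $\Gamma$ and then $N \to \infty$ gives $m(\lambda, \Phi, \{\sigma\om\}\times Y, G_1, \vp) \le e^{\lambda+C} \, m(\lambda, \Phi, \{\om\}\times Y, G_1, \vp)$, so vanishing of the right side forces vanishing of the left side. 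Hence $P^A(\Phi, \{\sigma\om\}\times Y, G_1, \vp) \le P^A(\Phi, \{\om\}\times Y, G_1, \vp)$, and letting $\vp \to 0$ yields $TP_Y(\sigma\om) \le TP_Y(\om)$ for all $\om$.

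Finally, I combine this pointwise one-sided inequality with the $\sigma$-invariance of $\nu$. Since $X$ is compact and $\Phi$ is continuous, a standard cover argument bounds $|TP_Y|$ by a constant depending only on $\|\Phi\|$ and the topological entropy of $G$, so $TP_Y$ is a bounded measurable function, hence $\nu$-integrable. By $\sigma$-invariance of $\nu$, $\int (TP_Y - TP_Y\circ\sigma)\, d\nu = 0$, while the integrand is nonnegative; therefore $TP_Y = TP_Y\circ\sigma$ $\nu$-a.e. Ergodicity of $\nu$ then implies $TP_Y$ is constant $\nu$-a.e. The only obstacle worth flagging is the possible infinite multiplicity of preimages (which blocks a direct appeal to Theorem \ref{invy}); this is precisely what the integration-against-$\nu$ trick circumvents, trading the pointwise two-sided equality for the one-sided inequality that surjectivity alone provides.
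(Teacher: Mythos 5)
Your argument is correct in substance, but it takes a genuinely different route from the paper's. The paper simply applies Theorem \ref{invy} to $\cal Y=\{\om\}\times Y$: since $f_{\om_0}(Y)=Y$ one has $F(\cal Y)=\{\sigma\om\}\times Y$, hence the pointwise \emph{equality} $TP_Y(\om)=TP_Y(\sigma\om)$ for every $\om$, and ergodicity finishes. You instead prove only the one-sided inequality $TP_Y(\sigma\om)\le TP_Y(\om)$ by pushing covers forward under $f_{\om_0}$ --- in effect the first (easy) half of the proof of Theorem \ref{invy}, the half that does not use preimages --- and then upgrade to a.e.\ equality using the $\sigma$-invariance of $\nu$. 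What your route buys is that it avoids the hypothesis $\card (f_j^{-1}(x))\le M$, which Theorem \ref{invy} requires but the corollary does not state; the paper's route is shorter but silently imports that hypothesis. Your covering step is sound: the inclusion $f_{\om_0}(B_n(y,\om,\vp))\subset B_{n-1}(f_{\om_0}(y),\sigma\om,\vp)$, the fact that the new centers $f_{\om_0}(y_i)$ stay in $Y$ (so the new collection is an admissible subset of $\{\sigma\om\}\times Y$), and the one-term difference of consecutive sums give exactly the comparison of the quantities $m(\lambda,\cdot,\vp)$ you claim.

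The one step you should not wave at is the boundedness of $TP_Y$. For general continuous generators on a compact metric space, $P(\Phi,Y,G_1,\om)$ can equal $+\infty$ (already a single continuous map may have infinite topological entropy), so ``bounded by a constant depending on $\|\Phi\|$ and the topological entropy of $G$'' is not justified in this generality, and without integrability the identity $\int (TP_Y-TP_Y\circ\sigma)\,d\nu=0$ is unavailable. The repair is cheap and keeps your one-sided inequality as the engine: for each $c\in\bb R$, the pointwise inequality $TP_Y\circ\sigma\le TP_Y$ gives $\sigma^{-1}\{TP_Y>c\}\subset\{TP_Y>c\}$; $\sigma$-invariance of $\nu$ forces these two sets to coincide up to $\nu$-measure zero, ergodicity gives $\nu\{TP_Y>c\}\in\{0,1\}$, and letting $c$ run over the rationals shows $TP_Y$ is $\nu$-a.e.\ equal to a constant in $[-\|\Phi\|,+\infty]$ (one checks directly from the definitions that $TP_Y\ge -\|\Phi\|$, so the value $-\infty$ does not occur). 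Alternatively, run your integration argument on the truncations $\min(TP_Y,K)$, which still satisfy the one-sided inequality. Measurability of $TP_Y$, which both your argument and the paper's need for the ergodicity step, is the assertion the paper makes right after (\ref{TP}).
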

 
\begin{proof}
For any $\om \in \Sigma_m^+$,  take $\cal Y = \{\om\}\times Y$. Then from Theorem \ref{invy}, and using that $f_j(Y) =Y, 1 \le j \le m$, it follows that $TP_Y(\om) = TP_Y(\sigma\om), \forall \om \in \Sigma_m^+$. Then from  ergodicity of $\nu$, $TP_Y(\cdot)$ is constant $\nu$-a.e. on $\Sigma_m^+$.

\end{proof}

 \section{Inverse limit of the semigroup dynamics. }
 
 Let  as before a compact metric space $X$ and a semigroup of maps on $X$ generated by a finite set $G_1 = \{id_X, f_1, \ldots, f_m\}$, with $f_i: X \to X$ are continuous maps for $1 \le i \le m$.
 Then, inspired by the notion of inverse limit of an endomorphism (see for eg \cite{Ru1}, \cite{M-MZ}), we define the \textbf{inverse limit associated to the generators $G_1$},
 \begin{equation}\label{mixil}
 \hat X_{G_1} := \big\{(\eta, \hat x(\eta)) \in \Sigma_m \times X^{\bb Z},  \text{with} \ \hat x(\eta) = (\ldots, x_{-1}, x, x_1, \ldots), \ x_j = f_{\eta_{j-1}}(x_{j-1}), j \in \bb Z \big\}
 \end{equation}
 
$\hat X_{G_1}$ is a compact metric space with the metric  from $\Sigma_m^+ \times X^{\bb Z}$. 
 The shift map $$\hat\sigma: \hat X_{G_1} \to \hat X_{G_1}, \ \hat\sigma(\om, \hat x(\om)) = (\sigma\om, \sigma\hat x(\om)),$$
 is a homeomorphism. 
Denote the space $Z = \Sigma_m^+ \times X$ and consider the endomorphism $$F: Z \to Z, \ F(\om, x) = (\sigma(\om),  f_{\om_0}(x))$$
This endomorphism has proved to be very useful, for eg in \cite{Bu}, \cite{CRV}, \cite{LMW}.
We show that $\hat X_{G_1}$ is  in fact the inverse limit (in the usual sense) of $F$ on $Z$. 

\begin{proposition}\label{invl}
The inverse limit $\hat F: \hat Z \to \hat Z$ of the endomorphism $F:Z \to Z$, is topologically conjugate  to the shift $\hat \sigma: \hat X_{G_1} \to \hat X_{G_1}$.
\end{proposition}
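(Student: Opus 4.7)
The plan is to construct an explicit homeomorphism $\Psi : \hat X_{G_1} \to \hat Z$ that intertwines $\hat\sigma$ with $\hat F$. Here $\hat Z$ denotes, as is standard for an inverse limit of a non-invertible map, the compact space of prehistories $(z_n)_{n\le 0}$ in $Z = \Sigma_m^+\times X$ satisfying $F(z_{n-1}) = z_n$ for all $n\le 0$, endowed with the canonical one-step shift $\hat F$ defined by $\hat F((z_n)_{n\le 0}) = (z'_n)_{n\le 0}$ with $z'_n = z_{n+1}$ for $n \le -1$ and $z'_0 = F(z_0)$. Given $(\eta,\hat x(\eta))\in\hat X_{G_1}$ with $\hat x(\eta)=(\ldots,x_{-1},x_0,x_1,\ldots)$, I would set
$$\Psi(\eta,\hat x(\eta)) := \bigl((\om^{(n)}, x_n)\bigr)_{n\le 0}, \qquad \om^{(n)} := (\eta_n,\eta_{n+1},\eta_{n+2},\ldots)\in\Sigma_m^+.$$
The defining relation $x_n = f_{\eta_{n-1}}(x_{n-1})$ of $\hat X_{G_1}$ yields $F(\om^{(n-1)}, x_{n-1}) = (\sigma\om^{(n-1)},f_{\eta_{n-1}}(x_{n-1})) = (\om^{(n)}, x_n)$, so $\Psi$ indeed takes values in $\hat Z$.

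Next I would construct the inverse. Starting from $(\om^{(n)}, y_n)_{n\le 0}\in\hat Z$, recover $\eta\in\Sigma_m$ by $\eta_n:=\om^{(n)}_0$ for $n\le 0$ and $\eta_k:=\om^{(0)}_k$ for $k\ge 0$; the compatibility $\om^{(n)} = \sigma\om^{(n-1)}$ forced by $F(z_{n-1})=z_n$ makes this consistent at $n=0$. Recover the orbit by $x_n:=y_n$ for $n\le 0$ and $x_n := f_{\eta_{n-1}}\circ\ldots\circ f_{\eta_0}(y_0)$ for $n\ge 1$; the identity $x_j = f_{\eta_{j-1}}(x_{j-1})$ then holds for every $j\in\bb Z$, so $(\eta,\hat x(\eta))\in\hat X_{G_1}$, and this construction is inverse to $\Psi$. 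Continuity of both maps is immediate, because each coordinate of the output is a continuous function of finitely many coordinates of the input (via the shift and the continuous $f_j$); alternatively, since $\hat X_{G_1}$ is compact and $\hat Z$ is Hausdorff, the continuous bijection $\Psi$ is automatically a homeomorphism.

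Finally, the conjugacy $\hat F\circ\Psi = \Psi\circ\hat\sigma$ is a direct verification: applying $\hat\sigma$ reindexes $(\eta,\hat x(\eta))$ to $(\sigma\eta,\sigma\hat x(\eta))$, and under $\Psi$ the resulting prehistory is $(\om^{(n+1)}, x_{n+1})_{n\le 0}$, which is precisely $\hat F$ applied to $(\om^{(n)},x_n)_{n\le 0}$: its $(-k)$-th coordinate is the original $(-k+1)$-th coordinate for $k\ge 1$, and its new $0$-th coordinate is $(\sigma\om^{(0)}, f_{\eta_0}(x_0)) = F(\om^{(0)}, x_0)$. The only real obstacle is indexing book-keeping: the decisive identification is that the $n$-th one-sided coordinate $\om^{(n)}\in\Sigma_m^+$ in a prehistory is exactly the forward tail of the two-sided $\eta\in\Sigma_m$ starting at index $n$; once this correspondence is fixed, every remaining identity follows from a single application of either $x_j = f_{\eta_{j-1}}(x_{j-1})$ or $\sigma\om^{(n-1)} = \om^{(n)}$.
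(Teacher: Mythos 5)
Your proposal is correct and is essentially the paper's own argument: both rest on the same identification of a prehistory $((\om^{(n)},x_n))_{n\le 0}$ of $F$ with a two-sided sequence $\eta\in\Sigma_m$ (each $\om^{(n)}$ being the forward tail of $\eta$ at index $n$, obtained by prepending one symbol at each step) together with the full orbit $(x_n)_{n\in\bb Z}$. You merely build the conjugacy in the opposite direction ($\hat X_{G_1}\to\hat Z$ rather than $\hat Z\to\hat X_{G_1}$) and spell out the inverse, continuity, and the relation $\hat F\circ\Psi=\Psi\circ\hat\sigma$ in more detail than the paper does.
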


\begin{proof}
The inverse limit of $F$ on $Z$ is the space $\hat Z = \{\hat z = (z, z_{-1}, z_{-2}, \ldots), F(z_{-i}) = z_{-i+1}, i \ge 1\}$. So if we start with $z = (\om, x) \in Z = \Sigma_m^+ \times X$, then $z_{-1} = (\om_{-1}, x_{-1}) \in Z$ with $F(z_{-1}) = z$. 
Assume that $\om_{-1} = (\om_{-1, 0}, \om_{-1, 1}, \ldots)$ and $\sigma(\om_{-1}) = \om = (\om_0, \om_1, \ldots)$. Then $\om_{-1} = (\om_{-1, 0}, \om_0, \om_1, \ldots)$. If $\om_{-2} = (\om_{-2, 0}, \om_{-2, 1}, \ldots)$, then since $\sigma(\om_{-2}) = \om_{-1}$, it follows that $\om_{-2, 1} = \om_{-1, 0}, \om_{-2, 2} = \om_0, \ldots$. We continue this procedure by which a coordinate is added in front of the previous trajectory. Define  $\eta \in \Sigma_m$, $\eta = (\ldots, \eta_{-1}, \eta_0, \eta_1, \eta_2, \ldots)$ with $\eta_{-1} = \om_{-1, 0}, \eta_0 = \om_0, \eta_{-2} = \om_{-2, 0}, \ldots$.
Since $F(z_{-1}) = z$, then $f_{\om_{-1, 0}}(x_{-1}) = x$. As $F(z_{-2}) = z_{-1}$, we have $f_{\om_{-2, 0}}(x_{-2}) = x_{-1}$, and so on. 
Thus $(\eta, (\ldots, x_{-2}, x_{-1}, x_0, x_1, x_2, \ldots)) \in \Sigma_m \times X^{\bb Z}$ belongs to $\hat X_{G_1}$. This defines a topological conjugacy $\Theta$ between $\hat F : \hat Z \to \hat Z$ and  $\hat \sigma: \hat X_{G_1} \to \hat X_{G_1}$. 
\end{proof}

Given now  $\Phi$ as above, consider the potentials, $$\hat \Phi: \hat X_{G_1} \to \bb R, \ \hat \Phi(\eta, \hat x(\eta)) = \phi_{\eta_0}(x_0), \  \text{and},$$
$$\Phi^+: \Sigma_m^+ \times X \to \bb R, \ \Phi^+(\om, x) := \phi_{\om_0}(x)$$

 \begin{proposition}\label{lift}
 In the above setting, for any multi-potential $\Phi$ on $X$,  $$P(\Phi^+, F) = P(\hat \Phi, \hat \sigma)$$
 \end{proposition}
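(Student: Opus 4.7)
The plan is to reduce the claimed equality to the classical fact that topological pressure is preserved under the natural extension, and then describe the two standard inequalities that prove it.

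By Proposition \ref{invl} there is a topological conjugacy $\Theta:\hat Z\to\hat X_{G_1}$ with $\hat\sigma\circ\Theta=\Theta\circ\hat F$. Inspecting the construction of $\Theta$ in that proof, if $z=(\om,x)\in Z$ and $\Theta(\hat z)=(\eta,(x_k)_{k\in\bb Z})$ then $\eta_0=\om_0$ and $x_0=x$, so $\hat\Phi(\Theta(\hat z))=\phi_{\eta_0}(x_0)=\phi_{\om_0}(x)=\Phi^+(\pi(\hat z))$, where $\pi:\hat Z\to Z$ denotes the projection onto the zeroth coordinate. Since topological pressure is a conjugacy invariant, $P(\hat\Phi,\hat\sigma)=P(\Phi^+\circ\pi,\hat F)$, and it remains to prove
\[
P(\Phi^+\circ\pi,\hat F)=P(\Phi^+,F).
\]

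Equip $\hat Z$ with the standard inverse-limit metric $\hat d(\hat z,\hat w)=\sum_{i\ge 0}2^{-i}d(z_{-i},w_{-i})$, so that $\pi$ is $1$-Lipschitz, surjective, and $F\circ\pi=\pi\circ\hat F$. The easy inequality $P(\Phi^+,F)\le P(\Phi^+\circ\pi,\hat F)$ then follows because any $(n,\vp)$-spanning set $\hat E$ for $\hat F$ projects to an $(n,\vp)$-spanning set $\pi(\hat E)$ for $F$, and since $S_n(\Phi^+\circ\pi)=S_n\Phi^+\circ\pi$ the $F$-partition sum over $\pi(\hat E)$ is bounded by the $\hat F$-partition sum over $\hat E$; the usual limits in $n$ and $\vp$ yield the inequality.

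For the opposite inequality, I would choose, given $\vp>0$, an integer $N_0$ with $\diam(Z)\cdot\sum_{i>N_0}2^{-i}<\vp/2$. A direct unfolding of $\hat d$ shows that the $(n,\vp)$-Bowen ball of $\hat F$ around $\hat z$ consists of $\hat w$ satisfying $d(z_k,w_k)<\vp/4$ for $-N_0\le k\le n-1$, which is precisely an $(n+N_0,\vp/4)$-Bowen ball of $F$ around $z_{-N_0}$ pulled back by the shifted projection $\hat w\mapsto w_{-N_0}$. Given a minimal $(n+N_0,\vp/4)$-spanning set $E\subset\bigcap_{k\ge 0}F^k(Z)$ for $F$, each $z\in E$ lifts to some $\hat z\in\hat Z$ with $z_{-N_0}=z$; the resulting collection is $(n,\vp)$-spanning for $\hat F$, and its partition sum differs from that of $E$ by at most a factor $\exp(N_0\,\|\Phi^+\|_\infty)$, which is negligible after dividing by $n$ and letting first $n\to\infty$ and then $\vp\to 0$.

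The main technical point is this last step: converting spanning sets of the non-invertible $F$ into spanning sets of its natural extension $\hat F$ without inflating cardinality, which works precisely because the exponential decay of the tail of $\hat d$ confines the ``memory'' of $\hat F$-orbit segments to finitely many extra coordinates. A cleaner but less direct route is via the variational principle combined with Rokhlin's theorem on natural extensions: $\pi$ induces an entropy-preserving bijection between $\hat F$-invariant Borel probability measures and $F$-invariant Borel probability measures (all of the latter are supported on $\bigcap_{k\ge 0}F^k(Z)$), under which $\int\Phi^+\circ\pi\,d\hat\mu=\int\Phi^+\,d\pi_*\hat\mu$, so the two variational suprema agree.
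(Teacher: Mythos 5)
Your argument takes the same route as the paper: transport $\hat\Phi$ through the conjugacy $\Theta$ of Proposition \ref{invl} to $\Phi^+\circ\pi_Z$ on the inverse limit of $F$, then use that pressure is unchanged when passing to the natural extension -- a fact the paper simply cites to \cite{Ru2}, while you supply a proof via spanning sets (and an alternative via the variational principle and Rokhlin's theorem). The only caveat is in your ``easy'' inequality: when $F$ is not surjective, projecting an $(n,\vp)$-spanning set of $\hat Z$ only spans $\pi(\hat Z)=\bigcap_{k\ge 0}F^k(Z)$, so one must still check $P(\Phi^+,F)=P\big(\Phi^+,F|_{\bigcap_{k\ge 0}F^k(Z)}\big)$ (e.g.\ by reduction to the non-wandering set, or by your variational-principle route, which already handles this since every $F$-invariant measure is supported on $\bigcap_{k\ge 0}F^k(Z)$).
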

 
 \begin{proof}
Denote as above by $Z = \Sigma_m^+\times X$ and $\hat Z$ be the inverse limit relative to the endomorphism $F$, and let $\pi_Z:\hat Z \to Z$ the canonical projection.  Then by \cite{Ru2} the pressure of $\hat\Phi$ with respect to $F$ is equal to the pressure of $\Phi^+ \circ \pi_Z$ with respect to  $\hat F$,  $P(\Phi^+, F) = P(\Phi^+ \circ \pi_Z, \hat F)$.
From Proposition \ref{invl} and its proof, $\Phi^+\circ \pi_Z \circ \Theta^{-1} = \hat\Phi$, and $\Theta$ conjugates  $\hat F$ on $\hat Z$ with  $\hat\sigma$ on $\hat X_{G_1} $. So $P(\Phi^+, F) = P(\Phi^+\circ \pi_Z, \hat F) = P(\hat\Phi, \hat \sigma)$.
\end{proof}
 
 We now compare the pressure $P(\hat\Phi, \hat\sigma)$ on $\hat X_{G_1}$ (which from  Theorem \ref{lift} is equal to $P(\Phi^+, F)$), with the amalgamated pressure  and  condensed pressure of $\Phi$ on $X$.
  
 \begin{theorem}\label{Fam}
 In the above setting, 
  $$ P^a(\Phi, G_1) + \log m \le P(\Phi^+, F) = P(\hat\Phi, \hat\sigma) \le P_u(\Phi, G_1) + \log m$$
 \end{theorem}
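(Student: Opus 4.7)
The middle equality is established in Proposition \ref{lift}, so the task is to sandwich $P(\Phi^+,F)$ between $P^A(\Phi,G_1)+\log m$ and $P_u(\Phi,G_1)+\log m$. The first step is to describe the geometry of $F$-Bowen balls: with the standard ultrametric on $\Sigma_m^+$, for each $\vp>0$ fix the minimal $N=N(\vp)\in\mathbb N$ with $d_\Sigma(\om,\om')<\vp$ iff $\om|_N=\om'|_N$. A direct check gives
\[
B_n^F\big((\om,x),\vp\big)\;=\;\{\om'\in\Sigma_m^+:\om'|_{n+N}=\om|_{n+N}\}\times B_n(x,\om|_n,\vp),
\]
together with $S_n^F\Phi^+(\om,x)=S_n\Phi(x,\om)$. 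Since $\Sigma_m^+\times X$ is compact and $F$-invariant, $P(\Phi^+,F)$ may be computed equivalently via capacities or via the Carath\'eodory--Pesin construction.

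For the lower inequality the plan is to use the capacity form of $P(\Phi^+,F)$. Given any $F$-$(n,\vp)$-spanning set $E\subset\Sigma_m^+\times X$, partition $E$ by the prefix $\om^i|_{n+N}$: for each $\tilde C\in\{1,\ldots,m\}^{n+N}$ the $X$-components of the matching points must $(n,\vp)$-span $X$ along the single trajectory $\tilde C|_n$, so their $\Phi$-weighted sum is bounded below by the amalgamated infimum $R_n(\vp)$ at scale $n$ (each single-trajectory cover of $X$ being a special amalgamated cover). Summing over the $m^{n+N}$ prefixes yields $r_n^F(\vp)\ge m^{n+N(\vp)}R_n(\vp)$; dividing by $n$, taking $\limsup$ (the $N(\vp)$ term is $o(n)$), and letting $\vp\to 0$ gives $P(\Phi^+,F)\ge\log m+\o{CP}^A(\Phi,G_1)$. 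Invoking Theorem \ref{invset} on the compact $G_1$-invariant set $X$ to identify $\o{CP}^A$ with $P^A$ closes this half.

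For the upper inequality the route is through the Carath\'eodory--Pesin form of $P(\Phi^+,F)$ directly, not through any capacity, because the Remark after Theorem \ref{osccond} warns that $P_u$ may be strictly less than $\o{CP}_u$. Fix $\lambda>P_u(\Phi,G_1,\vp)+\log m$ and $\delta>0$, and (by definition of $P_u$) choose a condensed cover $\{B_{n_i}(x^i,G_1,\vp)\}_i$ of $X$ with $n_i\ge N_0$ and $\sum_i\exp(-(\lambda-\log m)n_i+S_{n_i}\Phi(x^i))<\delta$. For each $i$ and each prefix $\tilde C\in\{1,\ldots,m\}^{n_i+N}$ attach the $F$-Bowen ball of order $n_i$ at $((\tilde C,1,1,\ldots),x^i)$: the inclusion $B_{n_i}(x^i,G_1,\vp)\subset B_{n_i}(x^i,\tilde C|_{n_i},\vp)$ guarantees that these balls cover $\Sigma_m^+\times X$. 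Using $S_{n_i}\Phi(x^i,\tilde C|_{n_i})\le S_{n_i}\Phi(x^i)$ and counting $m^{N(\vp)}$ copies of each first-$n_i$ string among the $m^{n_i+N}$ prefixes,
\[
\sum_{i,\tilde C}\exp\big(-\lambda n_i+S_{n_i}\Phi(x^i,\tilde C|_{n_i})\big)\;\le\;m^{N(\vp)}\sum_i\exp\big(-(\lambda-\log m)n_i+S_{n_i}\Phi(x^i)\big)\;<\;m^{N(\vp)}\delta,
\]
so $m(\lambda,F,\Phi^+,\vp)=0$ and $P(\Phi^+,F,\vp)\le\lambda$. Letting $\lambda\searrow P_u(\Phi,G_1,\vp)+\log m$ and then $\vp\to 0$ finishes the argument.

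The main obstacle is structural rather than computational: the upper bound cannot be proved by comparing capacities, since $P_u$ can be strictly smaller than $\o{CP}_u$. One must instead build flexible $F$-CP covers out of flexible condensed CP covers of $X$, keeping careful track of the two length-scales $(n_i,N(\vp))$. The critical combinatorial observation is that the enumeration of prefixes contributes exactly $m^{n_i}$, and hence $\log m$ per iterate; the auxiliary factor $m^{N(\vp)}$ is $\vp$-dependent but $n$-independent, and so washes out in the $n\to\infty$ and then $\vp\to 0$ limits.
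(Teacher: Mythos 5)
Your argument is correct and follows essentially the same route as the paper: the lower bound by partitioning $F$-spanning data according to symbolic prefixes, comparing each prefix class with an amalgamated cover along a single trajectory, and invoking Theorem \ref{invset} to replace the capacity by $P^A$; the upper bound by crossing a variable-length condensed cover with all cylinders and using $S_{n}\Phi(x,\om)\le S_{n}\Phi(x)$, so that the count of cylinders contributes exactly $\log m$. The only difference is that you track the $\vp$-dependent cylinder depth $N(\vp)$ explicitly, which the paper suppresses since it washes out in the limits.
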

 
\begin{proof}
Since $F: \Sigma_m^+ \times X \to \Sigma_m^* \times X, \ F(\om, x) = (\sigma \om, \phi_{\om_0}(x)) $, one sees that a Bowen ball for $F$ has the form $[\om_1\ldots \om_n] \times B_n(x, \om, \vp)$. So an $(n, \vp)$-spanning set $\cal F$ for $F$ contains balls of type $[\om_1\ldots \om_n]\times B_n(x, \om, \vp)$, where for every $\om \in \Sigma_m^+$ the respective balls $B_n(x, \om, \vp)$ form a set $\cal F_n(\om)$ which is $(n, \vp)$-spanning on $X$ along the trajectory $\om$. 
Also notice that for any $(\om, x) \in \Sigma_m^+ \times X$, $S_n\Phi^+(\om, x) = S_n\Phi(x, \om)$.

But from definitions it is then clear that $$\mathop{\sum}\limits_{x \in \mathcal F_n(\om)} \exp(S_n\Phi^+(\om, x)) \ge \inf \{\mathop{\sum}\limits_{(C, y) \in \cal G} \exp(S_n\Phi(y, C)), \ X  = \mathop{\cup}\limits_{(C, y) \in \cal G} B_n(y, C, G_1, \vp) \}
$$
 Thus, since for any $n \ge 1$ there are exactly $m^n$ cylinders of type $[\om_1\ldots\om_n]$ which cover $\Sigma_m^+$. By using Theorem \ref{invset}, we obtain that $P(\Phi^+, F) \ge \log m + P^A(\Phi, G_1).$

 To prove the other inequality in the statement,  let us now consider an integer $N\ge 1$ and a cover $\Gamma$ of $X$ with condensed balls $B_{n_i}(x, G_1, \vp)$, with $n_i \ge N$. And for any $\om\in\Sigma_m^+$ and any ball in $\Gamma$ take the product $[\om_1\ldots \om_{n_i}] \times B_{n_i}(x, \om, \vp)$, which is a Bowen ball for $F$;  denote the collection of all these Bowen balls by $\cal F(\Gamma)$.

From above it follows that $\cal F(\Gamma)$ covers $\Sigma_m^+\times X$, and for each $B_{n_i}(x, G_1, \vp) \in \Gamma$ there are $m^{n_i}$ balls of type $[\om_1\ldots \om_{n_i}]\times B_{n_i}(x, \om, \vp)$. 
On the other hand, we have that $$\mathop{\sum}\limits_{\cal F(\Gamma)} \exp(S_{n_i}\Phi^+(\om, x)) \le \mathop{\sum}\limits_{\Gamma} \exp(S_{n_i}\Phi(x, G_1) + n_i \log m)$$
Thus, $P(\Phi^+, F) \le P_u(\Phi, G_1) + \log m.$

\end{proof}

 \section{Partial Variational Principle for amalgamated pressure.}

 In this Section we prove a Partial Variational Principle for the amalgamated pressure of multi-potentials in the dynamics of semigroups for certain hyperbolic maps. Variational principles for pressure and for other dynamical notions have been obtained by many authors (for eg \cite{Wa},  \cite{Pe}, \cite{Ru1}, \cite{T}).
 
Recall the notations of previous Sections, and the skew-product $F$ from (\ref{FF}).  Consider a probability $F$-invariant measure $\mu$ on $\Sigma_m^+ \times X$. Its canonical projection on the second coordinate is called a \textbf{marginal measure} on $X$. 
On $X$ one can define also \textit{stationary measures}, i.e measures $\nu$ for which there exists some shift-invariant probability $\rho$ on $\Sigma_m^+$ so that $\rho\times \nu$ is $F$-invariant. Stationary measures are marginal, however there may exist marginal measures which are not stationary. In this Section, we will employ only $F$-invariant measures $\mu$ on the lift $\Sigma_m^+\times X$, as they encapsulate in general more information than stationary measures (for eg \cite{Ar}, \cite{Ki}).

Denote by $G_\mu$ the set of \textbf{generic points for $\mu$}, i.e the set of points for which the Birkhoff Ergodic Theorem holds with respect to $\mu$ for every real-valued continuous function on $X$. The set $G_\mu$ is Borel in $\Sigma_m^+ \times X$, but may  be non-compact. 

We now define the measure-theoretic amalgamated entropy of $\mu$, recalling the definition of amalgamated pressure over a set $\cal Y \subset \Sigma_m^+\times X$ from (\ref{caly}); in our case $\cal Y = G_\mu$.

\begin{definition}\label{ament}
In the above setting, define the \textbf{amalgamated measure-theoretic entropy} $h^A(\mu, G_1)$ to be the amalgamated topological entropy on the projection of $G_\mu$, $$h^A(\mu, G_1) := h^A(\pi_2(G_\mu), G_\mu, G_1) = P^A(0, G_\mu, G_1).$$
\end{definition}

 Recall from Remark \ref{calyne} that, in general we cannot say which of the quantities $P^A(\Phi, G_1)$ and $P^A(\Phi, G_\mu, G_1)$ is larger.

 \begin{theorem}\textbf{Partial Variational Principle for $P^A$.} \label{PVP}
 
 a) Assume $X$ is a compact set in  $\bb R^D$ and all the maps $f_j \in G_1$ are $\mathcal C^1$-differentiable and conformal on a neighbourhood of $X$, and there exists a constant $\alpha >1$ so that $|D f_j(x)| > \alpha >1, \forall x \in X$, $1\le j \le m$. Then for any continuous multi-potential $\Phi$, $$P^A(\Phi, G_1) \le \sup\{h^A(\mu, G_1) + \int \Phi^+  \  d\mu, \ \mu \ F-\text{invariant probability on} \ \Sigma_m^+\times X\}$$

 b) The same conclusion as in a) holds,  if all functions $f_j$ are hyperbolic over a $G_1$-invariant set $\Lambda\subset \bb R^D$ and if for every $x \in \Lambda$ the stable tangent spaces $E^s(f_j, x)$ coincide, i.e $E^s(f_j, x) = E^s(f_k, x), 1 \le j, k \le m$, and the maps $f_j, 1 \le j \le m$ are conformal on the common local stable manifolds $W^s(x), x \in \Lambda$.  
 \end{theorem}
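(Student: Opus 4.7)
The plan is to reduce the partial variational inequality for $P^A$ to the classical Variational Principle on the skew-product $F$, using Theorem \ref{Fam}, and then to compare the measure-theoretic entropy $h_\mu(F)$ with the amalgamated measure-theoretic entropy $h^A(\mu, G_1)$.

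First, under the hypotheses of (a) the maps $f_j$ are uniformly expanding on $X$ (since they are conformal with $|Df_j(x)|>\alpha>1$); under those of (b) hyperbolicity together with the common stable subspaces yields uniform contraction on the common local stable manifolds and uniform expansion transversally. In either case $F:\Sigma_m^+\times X\to\Sigma_m^+\times X$ is continuous and positively expansive on the compact space $\Sigma_m^+\times X$, so the classical Variational Principle (\cite{Wa}) applies and gives $P(\Phi^+, F)=\sup_\mu\{h_\mu(F)+\int\Phi^+\, d\mu\}$, where the supremum is over $F$-invariant Borel probabilities. Combined with the left-hand inequality in Theorem \ref{Fam}, $P^A(\Phi, G_1)+\log m\le P(\Phi^+, F)$, this yields
\[
P^A(\Phi, G_1)\le\sup_{\mu}\Big\{h_\mu(F)-\log m+\int\Phi^+\, d\mu\Big\},
\]
so it remains to show that $h_\mu(F)-\log m\le h^A(\mu, G_1)$ for every $F$-invariant probability $\mu$.

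For this I would apply the Abramov--Rokhlin formula to the skew product $F$ over the shift $\sigma$ to write $h_\mu(F)=h_{\pi_{1*}\mu}(\sigma)+h_\mu(F\mid\pi_1)$, and use $h_{\pi_{1*}\mu}(\sigma)\le\log m$; the claim reduces to bounding the fiber entropy $h_\mu(F\mid\pi_1)\le h^A(\mu, G_1)$. By ergodic decomposition one may assume $\mu$ ergodic. The Brin--Katok local entropy theorem for the expansive $F$ then yields, for $\mu$-a.e.\ $(\omega,x)\in G_\mu$, the identity $h_\mu(F)=\lim_{\epsilon\to 0}\limsup_n(-\tfrac{1}{n}\log\mu(B^F_n((\omega,x),\epsilon)))$. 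Since for $\epsilon$ small the $F$-Bowen ball satisfies $B^F_n((\omega,x),\epsilon)\subset[\omega|_n]\times B_n(x,\omega,\epsilon)$, disintegrating $\mu=\int\mu_\omega\, d\pi_{1*}\mu(\omega)$ over the base expresses the fiber contribution to $h_\mu(F\mid\pi_1)$ via the exponential decay rate of $\mu_\omega(B_n(x,\omega,\epsilon))$ along generic trajectories.

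Given $\lambda<h_\mu(F\mid\pi_1)$ and $\epsilon$ small, I would choose a Borel set $A\subset G_\mu$ of $\mu$-measure close to $1$ on which $\mu_\omega(B_n(x,\omega,\epsilon))\le e^{-\lambda n}$ for all $n\ge N$. For any countable family $\Gamma\subset G_\mu$ with $n(\omega,y)\ge N$ whose balls $B_{n(\omega,y)}(y,\omega,\epsilon)$ cover $\pi_2(G_\mu)$, a Besicovitch-type selection extracts an efficient subfamily still covering $A$, and integrating the density lower bounds for the fiber measures $\mu_\omega$ against $\pi_{1*}\mu$ yields a positive lower bound on $\sum_{(\omega,y)\in\Gamma}e^{-\lambda n(\omega,y)}$. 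Thus $m(\lambda, \mathbf{0}, G_\mu, G_1, \epsilon)>0$, so $P^A(\mathbf{0}, G_\mu, G_1)\ge\lambda$; letting $\lambda\nearrow h_\mu(F\mid\pi_1)$ gives the needed $h^A(\mu, G_1)\ge h_\mu(F\mid\pi_1)$. The principal obstacle is precisely this last transfer step: moving from the Brin--Katok density estimate on $\Sigma_m^+\times X$ to a covering lower bound on $\pi_2(G_\mu)$ by Bowen balls along genuinely generic trajectories. One must carefully handle the disintegration of $\mu$ over $\pi_1$ and arrange the Vitali/Besicovitch selection to be compatible with the product of the cylinder structure on $\Sigma_m^+$ and the Bowen balls on $X$. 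The conformality assumption in (a), and the common stable subspaces with conformality on stable leaves in (b), are precisely what make the Bowen balls $B_n(x,\omega,\epsilon)$ comparable to round balls with bounded distortion, so that classical covering lemmas apply.
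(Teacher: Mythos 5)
Your overall skeleton agrees with the paper's: apply the classical Variational Principle to $F$ and $\Phi^+$ on $\Sigma_m^+\times X$, use the left-hand inequality of Theorem \ref{Fam}, and reduce everything to an inequality of the form $h_\mu(F)-\log m\le h^A(\mu,G_1)$ (your Abramov--Rokhlin reformulation $h_\mu(F\mid\pi_1)\le h^A(\mu,G_1)$ is an equivalent way of stating the same target, since $h_{\pi_{1*}\mu}(\sigma)\le\log m$). The genuine gap is that this key inequality is exactly the hard part, and your sketch of it does not go through as stated. The fiberwise Brin--Katok estimate you invoke controls $\mu_\omega\bigl(B_n(x,\omega,\varepsilon)\bigr)$ only when the trajectory of the Bowen ball coincides with the fiber $\omega$; but in the definition of $h^A(\mu,G_1)=P^A(\mathbf 0,G_\mu,G_1)$ a covering family consists of balls $B_{n_i}(y_i,\omega_i,\varepsilon)$ each along its own trajectory $\omega_i$, and when you ``integrate the density lower bounds for $\mu_\omega$ against $\pi_{1*}\mu$'' you need to bound $\mu_{\omega'}\bigl(B_{n_i}(y_i,\omega_i,\varepsilon)\bigr)$ for the continuum of fibers $\omega'\neq\omega_i$, which your estimates do not give. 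Without a comparison of Bowen balls \emph{across different trajectories} the sum $\sum_i e^{-\lambda n_i}$ cannot be bounded below. This cross-trajectory comparison is precisely where the hypotheses enter in the paper's proof: one introduces $\psi(\omega,x)=\log|Df_{\omega_0}(x)|$, restricts to the sets $G_\mu(N,\varepsilon)$ where the Birkhoff averages of both $\Phi^+$ and $\psi$ are $\varepsilon$-close to their integrals, and uses conformality and uniform expansion to get the nesting $B_n(x,\omega,\varepsilon)\subset B_{k_n}(x,\eta,\varepsilon)$ with $(1-\varepsilon)n\le k_n\le(1+\varepsilon)n$ for \emph{every} generic trajectory $\eta$; this converts a cover used for $P^A$ on $\pi_2(G_\mu(N,\varepsilon))$ into a cover of $G_\mu(N,\varepsilon)$ by $F$-Bowen balls at the cost of at most $m^{(1+\varepsilon)n}$ cylinders, giving $P(\Phi^+,G_\mu)\le P^A(\Phi,G_\mu,G_1)+\log m$ and hence the conclusion via $P(\Phi^+,G_\mu)=h(\mu)+\int\Phi^+\,d\mu$. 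Your proposal only uses conformality to justify a Besicovitch selection, which addresses a different and far less essential point.

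Two further problems with the sketched route. First, in case b) the maps are of saddle type, so the Bowen balls $B_n(x,\omega,\varepsilon)$ are tubular neighbourhoods of the common stable leaves, not round balls, and the Besicovitch covering theorem is not available; the paper handles b) by intersecting with submanifolds transversal to the common stable direction (and note Remark \ref{nomad} b), which warns explicitly that Vitali/Besicovitch-type selections fail for families of Bowen balls along infinitely many non-comparable trajectories -- this is exactly the situation your selection step must avoid). Second, your passing claim that $F$ is positively expansive is false in case b) (and unnecessary: neither the classical Variational Principle nor Brin--Katok requires it). The relativized Brin--Katok formula for the fiber entropy that you use is a nontrivial result in its own right and would need a precise reference; but even granting it, the transfer step above remains the missing core of the argument.
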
 
 
 \begin{proof}
 We apply first the classical Variational Principle for the map $F$ and potential $\Phi^+$ on $\Sigma_m^+ \times X$; recall  $\Phi^+(\om, x) = \phi_{\om_0}(x), (\om, x) \in \Sigma_m^+ \times X$. 
 Therefore we obtain, 
 \begin{equation}\label{cvp}
 P(\Phi^+, F) = \sup\{h(\mu) + \int\Phi^+ \ d\mu, \ \mu \ F-\text{invariant probability}\}
 \end{equation}

 Let now $\mu$ be an $F$-invariant probability measure on $\Sigma_m^+\times X$. Since $G_\mu$ is the set of generic points with respect to $\mu$ for every real-valued continuous function on $X$, we have that for every $(\om, x) \in G_\mu$ and every $\vp >0$, there exists an integer  $n(\vp)$ such that $|\frac{S_n\Phi^+(\om, x)}{n} - \int\Phi^+ \ d\mu| < \vp, \forall n > n(\vp)$.

 Denote by $\psi:\Sigma_m^+ \times X \to \bb R, \psi(\om, x) = \log|Df_{\om_0}(x)|$.
 Then since all the points from $G_\mu$ are generic for $\mu$, we have that $G_\mu = \mathop{\cap}\limits_{\vp >0} \mathop{\cup}\limits_{N >1} G_\mu(N, \vp)$, where $$G_\mu(N, \vp) := \{(\om, x) \in G_\mu, \ |\frac{S_n\Phi^+ (\om, x) }{n} - \int \Phi^+ \ d\mu| < \vp, |\frac{S_n\psi(\om, x)}{n} - \int\psi d\mu| < \vp,  n \ge N\}$$ We will work first on $G_\mu(N, \vp)$. Recall that the functions $f_j, 1 \le j \le m$ are conformal and expanding on a neighbourhood of $X$. So if $n > N$, by using the definition of $G_\mu(N, \vp)$ it follows that  the various balls $B_n(x, \om, \vp)$ for $(\om, x) \in G_\mu(N, \vp)$ have the same radius modulo a factor of $e^{n\vp}$.  
 
 Now, according to Theorem \ref{invset} the amalgamated pressure $P^A$ can be computed using covers with Bowen balls $B_n(x, \om, \vp)$ with the same length of trajectory $n$. 
Let a Bowen ball $B_n(x, \om, \vp)$ where $(\om, x) \in G_\mu(N, \vp)$ and assume $n>N$. Then  due to the fact that all functions $f_j, 1 \le j \le m$ are $\mathcal C^1$-differentiable on $X$ and expanding, there exists a maximum integer $k_n$ such that  $B_n(x, \om, \vp) \subset B_{k_n}(x, \eta, \vp)$, and where $$(1-\vp)n \le k_n \le (1+\vp)n,$$ for any  $(\eta, x) \in G_\mu(N, \vp)$.  Notice that a Bowen ball for $F$ has the form $[\om_1\ldots \om_n] \times B_n(x, \om, \vp)$. 
 If $\cal F$ is a countable cover of $\pi_2(G_\mu(N, \vp))$ with Bowen balls of type $B_n(x, \om, \vp)$ used to compute $P^A(\Phi, G_\mu(N, \vp), G_1)$, then we obtain a cover $\cal G$ of $G_\mu(N, \vp)$ with Bowen balls for the skew-product $F$ on $\Sigma_m^+ \times X$, $$\cal G = \{[\eta_1 \ldots \eta_{k_n}]\times B_{k_n}(x, \eta, \vp), \text{where} \ B_n(x, \om, \vp) \in \cal F, \ (\eta, x) \in G_\mu(N, \vp)\}$$

  Moreover, we have $S_n\Phi(x, \om) = S_n\Phi^+ (\om, x),$ where the second term above is the consecutive sum with respect to $F$. 
  We need  $m^{k_n}$ cylinders of type $[\eta_1 \ldots \eta_{k_n}]$ to cover $\Sigma_m^+$; thus since $k_n \le (1+\vp) n$, we cover $\Sigma_m^+$ with at most $m^{(1+\vp)n}$ such cylinders. On the other hand, $$|S_{k_n}\Phi(x, \eta) - S_n\Phi(x, \eta)| \le \vp n ||\Phi||,$$ and if $(\eta, x) \in G_\mu(N, \vp)$ and $n> N$, then $|\frac{S_n\Phi(x, \eta)}{n} - \int\Phi^+\ d\mu| < \vp$. But  for any cover $\cal F$ as above we obtain a cover $\cal G$ of $G_\mu(N, \vp)$. Therefore, by using the definition of amalgamated pressure over the set $G_\mu$ in (\ref{caly}) and the last inequalities, we infer that $$P(\Phi^+, G_\mu(N, \vp)) \le P^A(\Phi, G_\mu(N, \vp), G_1) + (1+\vp) \log m + \vp||\Phi||$$ Then, by taking $N\to \infty$ and $\vp \to 0$, it follows from above and Theorem \ref{amal} that 
\begin{equation}\label{pstar}
P(\Phi^+, G_\mu)  \le P^A(\Phi, G_\mu, G_1) + \log m
\end{equation}
 On the other hand, from \cite{Pe}, it follows that for $F$ and $\Phi^+$ on $\Sigma_m^+\times X$ we have, $$P(\Phi^+, G_\mu) = h(\mu) + \int_{\Sigma_m^+\times X} \Phi^+ \ d\mu,$$ where $P(\Phi^+, G_\mu)$ is the (usual) topological pressure of $\Phi^+$ with respect to $F$ on $G_\mu$. 
 
 Denote by $\cal M(F)$ the set of $F$-invariant probability measures on $\Sigma_m^+\times X$. 
 
We now use (\ref{pstar}) and  the Variational Principle for $F$ and $\Phi^+$ from (\ref{cvp}), hence $$P(\Phi^+, F) = \sup\{P(\Phi^+, G_\mu), \mu \in \cal M(F)\}\le \sup\{P^A(\Phi, G_\mu, G_1), \mu \in \cal M(F) \} + \log m$$
However from Theorem \ref{Fam}, $P^A(\Phi, G_1) + \log m \le P(\Phi^+, F)$. These inequalities imply, 
\begin{equation}\label{p*F}
P^A(\Phi, G_1) \le  \sup\{P^A(\Phi, G_\mu, G_1), \ \mu \in \cal M(F)\}
\end{equation}

But  the trajectories in $G_\mu$ are generic for $\Phi^+$ with respect to $\mu$, and  the same open covers of $\pi_2(G_\mu)$ can be used to compute both $h^A$ and $P^A$. Therefore, using the Birkhoff Ergodic Theorem  for $F$ and $\Phi^+$ on $\Sigma_m^+\times X$ along generic trajectories, we obtain that $P^A(\Phi, G_\mu, G_1) = h^A(\pi_2(G_\mu), G_\mu, G_1) + \int\Phi^+ d\mu$. Hence from  (\ref{p*F}),
$$P^A(\Phi, G_1) \le \sup\{h^A(\mu, G_1) + \int \Phi^+ \ d\mu, \ \mu \in \cal M(F)\}$$

The proof for b) is similar. Since for every $x \in \Lambda$ the stable direction is common and denoted by  $E^s(x)$, then the local stable manifolds of $f_j, 1 \le j \le m$ coincide at $x$ and are equal to some $W^s(x, \vp)$. Hence the Bowen balls $B_n(x, \om, \vp)$ are tubular neighbourhoods along $W^s(x, \vp)$ for any trajectory  $\om \in \Sigma_m^+$. Thus one can cover a set $A \subset \Lambda$ with a family $\cal F$ of Bowen balls $B_n(x, \eta, \vp)$ if and only if, for any submanifold $\Delta$ transverse to $E^s(z)$ at every $z \in \Lambda$, one can cover the set $A\cap \Delta $ with the intersections of balls $\Delta \cap B_n(x, \eta, \vp)$, where $B_n(x, \eta, \vp) \in \cal F$.
  
Thus for $N$ large and $\vp>0$, given a countable cover $\cal F$ with $B_n(x, \om, \vp)$, $n>N$ used to compute $P^A(\Phi, G_\mu(N, \vp), G_1)$, we obtain a cover $\cal G$ of $G_\mu(N, \vp)$ with sets $[\eta_1\ldots \eta_{k_n}] \times B_{k_n}(x, \eta, \vp)$  for all $B_n(x, \om, \vp) \in \cal F$ and $\eta$ with $(\eta, x) \in G_\mu(N, \vp)$. Hence $$(1-\vp) n \le k_n \le (1+\vp)n$$ So for every $B_n(x, \om, \vp)$ from $\cal F$, one has at most $m^{(1+\vp)n}$ Bowen balls  $[\eta_1\ldots \eta_{k_n}] \times B_{k_n}(x, \eta, \vp)$ for $F$ in the collection $\cal G$.
Then we apply the same argument as above, by using the Birkhoff Ergodic Theorem for $F$ and $\Phi^+$  on $\Sigma_m^+\times \Lambda$. Therefore one obtains, $P^A(\Phi, G_1) \le \sup\{h^A(\mu, G_1) + \int \Phi^+ \ d\mu, \ \mu \in \cal M(F)\}.$
\end{proof}

 \section{Local entropies of measures for semigroups.}
 
 In this Section, by analogy with the local entropies of measures introduced by Brin and Katok in the classical case (\cite{BK}, \cite{Wa}), we study several notions of local entropies for measures $\mu$ on $X$, by taking in consideration the dynamics of the semigroup $G$ generated by a finite set $G_1$.  The measures are not assumed to be $G$-invariant.

 \begin{definition}\label{locale}
 Consider $\mu$ to be a probability measure on $X$. For every point $x\in X$, define the \textbf{upper/lower local amalgamated entropy} at $x$, respectively by:
 $$h_\mu^u(x, G_1) = \mathop{\lim}\limits_{\vp \to 0}\mathop{\u{\lim}}\limits_{n \to \infty} \mathop{\sup}\limits_{|\om| =n}\frac{\log \mu(B_n(x, \om, \vp))^{-1}}{n}, \ h_\mu^l(x, G_1) = \mathop{\lim}\limits_{\vp \to 0}\mathop{\u{\lim}}\limits_{n \to \infty} \mathop{\inf}\limits_{|\om| =n}\frac{\log \mu(B_n(x, \om, \vp))^{-1}}{n}$$
 \end{definition}
 
 In a sense, $h^u_\mu(x, G_1)$ measures the maximal logarithmic rate of growth of the measures of Bowen balls along all trajectories, while $h^l_\mu(x, G_1)$ measures the smallest logarithmic rate of growth of the measures of balls along all  trajectories.
 
 Denote the topological entropy on $Y$ along a trajectory $\om \in \Sigma_m^+$, by $h(Y, G_1, \om)$. We now show that if  local amalgamated entropy can be estimated from below on a set $Y$ of positive measure, then the amalgamated pressure on $Y$ can be estimated.
  
 \begin{theorem}\label{ule}
Let the semigroup $G$ be generated by the finite set $G_1$, and consider $\mu$ a probability measure on $X$. Assume that $Y \subset X$ with $\mu(Y) >0$, and that $$h^l_\mu(x, G_1) \ge \alpha >0, \ \text{for every} \ x \in Y$$ Then, for every trajectory  $\om \in \Sigma_m^+$,  \  $h(Y, G_1, \om) \ge \alpha$. Moreover,  \ $h^A(Y, G_1) \ge \alpha.$
\end{theorem}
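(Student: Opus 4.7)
The strategy is a Brin--Katok style covering argument. A key initial observation is that, by Theorem \ref{compeat}, one has $h^A(Y,G_1)\leq h(Y,G_1,\omega)$ for every $\omega\in\Sigma_m^+$ (restricting covers to a single trajectory shrinks the admissible family, so the trajectory entropy dominates). Consequently both conclusions follow once the stronger-looking inequality $h^A(Y,G_1)\geq\alpha$ is established. Fix $\lambda\in(0,\alpha)$; by letting $\lambda\nearrow\alpha$ at the end, it suffices to produce $P^A(\mathbf{0},Y,G_1,\vp)\geq\lambda$ for all sufficiently small $\vp$.

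First I would build a ``good'' subset of $Y$ on which the measure estimate is uniform in the trajectory. The hypothesis $h^l_\mu(x,G_1)\geq\alpha$ says that, for each $x\in Y$, the quantity $\liminf_n \inf_{|\eta|=n}\frac{-\log\mu(B_n(x,\eta,\vp))}{n}$ exceeds $\lambda$ once $\vp$ is small enough. Define
\[
Y_N(\vp):=\bigl\{x\in Y:\mu(B_n(x,\eta,\vp))<e^{-n\lambda}\ \text{for all }n\geq N\text{ and all }\eta\in\Sigma_m^+\bigr\}.
\]
These sets are measurable (the function $x\mapsto\mu(B_n(x,\eta,\vp))$ is lower semicontinuous by Fatou, since Bowen balls are open and open in $x$ under perturbation), increase in $N$, and $\bigcup_N Y_N(\vp)$ increases to $Y$ as $\vp\searrow 0$. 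Hence I can pick $\vp_0>0$ and $N_0\geq 1$ with $\mu(Y_{N_0}(\vp_0))\geq \mu(Y)/4>0$.

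Second, I would execute a center-replacement trick. Let $\Gamma\subset \Sigma_m^*\times X$ be any admissible cover of $Y$ by Bowen balls $B_{n_i}(y_i,C_i,\vp_0/2)$ with $n_i\geq N\geq N_0$. For each index $i$ with $B_{n_i}(y_i,C_i,\vp_0/2)\cap Y_{N_0}(\vp_0)\neq\emptyset$, pick $z_i$ in the intersection; the triangle inequality applied along each iterate $f_{c_k\cdots c_1}$ gives $B_{n_i}(y_i,C_i,\vp_0/2)\subset B_{n_i}(z_i,C_i,\vp_0)$. Since $z_i\in Y_{N_0}(\vp_0)$ and $n_i\geq N_0$, the uniform estimate yields $\mu(B_{n_i}(z_i,C_i,\vp_0))<e^{-n_i\lambda}$, and the enlarged balls still cover $Y_{N_0}(\vp_0)$. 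Summing,
\[
\sum_i e^{-n_i\lambda}\ \geq\ \sum_i \mu\bigl(B_{n_i}(z_i,C_i,\vp_0)\bigr)\ \geq\ \mu\bigl(Y_{N_0}(\vp_0)\bigr)>0.
\]
Taking the infimum over $\Gamma$ gives $M(\mathbf 0,Y,G_1,\lambda,N,\vp_0/2)\geq\mu(Y_{N_0}(\vp_0))$ for every $N\geq N_0$, so $m(\lambda,\mathbf 0,Y,G_1,\vp_0/2)>0$ and $P^A(\mathbf 0,Y,G_1,\vp_0/2)\geq\lambda$. Letting $\vp_0\to 0$ and then $\lambda\to\alpha$ yields $h^A(Y,G_1)\geq\alpha$, and the trajectory bound follows from Theorem \ref{compeat}.

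The main technical nuisance I expect is the half-radius/enlargement step: the admissible cover uses balls of radius $\vp_0/2$, while after replacing centers one must estimate measures of balls of radius $\vp_0$. This factor-of-two inflation is harmless because the final step takes $\vp_0\to 0$, but it requires carrying two radius parameters throughout. The only other delicate point is that the uniform-in-$\eta$ decay of $\mu(B_n(x,\eta,\vp))$ is exactly what the \emph{lower} local amalgamated entropy $h^l_\mu$ (with $\inf_{|\eta|=n}$ inside the liminf) encodes; this is precisely why the hypothesis is phrased in terms of $h^l_\mu$ rather than $h^u_\mu$.
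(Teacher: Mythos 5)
Your argument is correct and follows the same Brin--Katok-style covering strategy as the paper: carve out of $Y$ a positive-measure Borel subset on which $\mu(B_n(x,\eta,\vp))<e^{-n\lambda}$ holds uniformly in $n\ge N$ and in $\eta\in\Sigma_m^+$, then sum this bound over any admissible cover of Bowen balls. Two refinements you make are worth recording. First, you invoke Theorem \ref{compeat} to deduce the trajectory bound from the single inequality $h^A(Y,G_1)\ge\alpha$, whereas the paper proves the trajectory bound and the amalgamated bound separately; your route is logically a bit tighter. Second, your center-replacement step (cover at radius $\vp_0/2$, then re-center each ball meeting $Y_{N_0}(\vp_0)$ at a point of that set while doubling the radius) makes explicit a detail the paper's final step leaves implicit: the defining infimum $M(\lambda,\Phi,Y,G_1,N,\vp)$ is over families $\Gamma\subset\mathcal X_m$ whose centers may lie anywhere in $X$, so the measure estimate, which is available only at points of the good subset, cannot be applied verbatim to an arbitrary admissible cover; the enlargement argument supplies the needed re-centering, at the cost of a radius factor that disappears in the $\vp\to 0$ limit. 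For the paper's trajectory-pressure step this issue does not arise, since $P(\Phi,Z,G_1,\om)=P^A(\Phi,\{\om\}\times Z,G_1)$ already restricts centers to $Z$ by definition. Your remark on measurability of $Y_N(\vp)$ is also the right one, provided you note that the intersection over $\eta\in\Sigma_m^+$ is really a finite intersection at each level $n$, since $B_n(x,\eta,\vp)$ depends only on $\eta|_n$.
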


\begin{proof}
Let us take an arbitrary number $\beta < \alpha$. We know that $h^l_\mu(x, G_1) \ge \alpha > \beta, x \in Y$. 
Thus for arbitrary $k \ge 1$ let us define the Borel set in $X$, $$Y_k:= \{y \in Y, \mathop{\u{\lim}}\limits_{n \to \infty} \mathop{\inf}\limits_{|\om| =n}\frac{\log \mu(B_n(y, \om, \vp))^{-1}}{n} > \beta, \forall \vp\in(0, 1/k]\}$$
And for every $k, p \ge 1$ introduce also the Borel set in $X$, $$Y_{k, p}:= \{y \in Y_k,  \mathop{\inf}\limits_{|\om| =n}\frac{\log \mu(B_{n}(y, \om, \vp))^{-1}}{n} > \beta, \forall n \ge p,  \vp \in (0, 1/k]\}$$

But then since $\mu(Y) >0$, and $Y = \mathop{\cup}\limits_{k \ge 1} Y_k$ and  $Y_k = \mathop{\cup}\limits_{p \ge 1} Y_{k, p}$, there exist integers $k, p $ arbitrarily large with $\mu(Y_{k, p}) > \frac{\mu(Y)}{2} >0$. 
Hence if $y \in Y_{k, p}$, then for any  $\om \in \Sigma_m^+$ we have for all integers $n \ge p$ and any $\vp \in (0, \frac 1k)$,  that:
\begin{equation}\label{mubn}
\frac{\log \mu(B_{n}(y, \om, \vp))^{-1}}{n} > \beta.
\end{equation}
Let us fix the trajectory $\om \in \Sigma_m^+$, and consider an open cover $\cal F$ of $Y_{k, p}$ with Bowen balls of type $B_n(x, \om, 1/k)$ along $\om$, with all lengths $n$ being larger than $p$. Since $X$ is a compact metric space and thus second-countable, and  $X$ and $Y_{k, p}$ are also  Lindel\"of, it follows that from any open cover we can extract a countable subcover. Thus choose  $\Gamma \subset \cal F$ to be a countable cover of the set $Y_{k, p}$ with Bowen balls $B_{n}(y, \om, 1/k)$, with all the respective lengths $n$  larger than $p$. Then from above we obtain, $$\mathop{\sum}\limits_\Gamma \exp(-\beta n) \ge \mathop{\sum}\limits_\Gamma \mu(B_{n}(y, \om, 1/k)) \ge \mu(Y_{k, p}) >\mu(Y)/2 >0,$$
for some large  $k, p \ge k_0$.
Hence from the definition of trajectory entropy, we obtain  $h(Y_{k, p}, G_1, \om) \ge \beta$. But $Y = \mathop{\cup}\limits_{k, p \ge 1} Y_{k, p}$ and we showed in Section 2 that $h(Y, G_1, \om) = \mathop{\sup}\limits_{k, p} h(Y_{k, p}, G_1, \om)$. Since $\beta$ is arbitrarily smaller than $\alpha$, we obtain $h(Y, G_1, \om) \ge \alpha$.

Finally  using  (\ref{mubn}) for  $\om \in \Sigma_m^+$, and taking an arbitrary cover of $Y$ with Bowen balls along various trajectories, we obtain  as above,  $h^A(Y, G_1) \ge \alpha$.
 \end{proof}
 

 \begin{lemma}\label{zorn}
 Let the compact metric space $X$ and finitely many continuous maps $f_j: X \to X, 1 \le j \le m$. Consider a trajectory $\om \in \Sigma_m^+$, and $\vp >0$, and a family $\cal F$ of Bowen balls $B_n(x, \om, \vp)$ along $\om$, with $n \ge 1$ variable. Then there exists a subfamily $\cal G \subset \cal F$ which contains mutually disjoint balls, such that $$\mathop{\bigcup}\limits_{B \in \cal F} B \subset \mathop{\bigcup}\limits_{B_n(x, \om, \vp) \in \cal G} B_n(x, \om, 3\vp)$$
 \end{lemma}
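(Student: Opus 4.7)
The strategy is a Vitali-type covering argument, where the length parameter $n$ plays the role of an inverse radius: increasing $n$ (with $\varepsilon$ fixed) shrinks the Bowen ball $B_n(x,\omega,\varepsilon)$. So the ``biggest'' balls in $\mathcal F$ are those of smallest level $n$, and I would select a disjoint subfamily by processing levels in increasing order, just as in the classical proof one processes decreasing radii.

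The key preliminary step is a tripling property for Bowen balls. Writing $g_0=\mathrm{id}$ and $g_j=f_{\omega_{j-1}\ldots\omega_0}$ for $1\le j\le n$, suppose $B_{n_1}(x_1,\omega,\varepsilon)\cap B_{n_2}(x_2,\omega,\varepsilon)$ contains a point $z$, with $n_1\le n_2$. For any $w\in B_{n_2}(x_2,\omega,\varepsilon)$ and any $0\le j\le n_1$, the triangle inequality in $X$ applied along the $\omega$-orbit gives
\[
d(g_j(x_1),g_j(w))\le d(g_j(x_1),g_j(z))+d(g_j(z),g_j(x_2))+d(g_j(x_2),g_j(w))<3\varepsilon,
\]
since each of the three distances is less than $\varepsilon$ for $j\le n_1\le n_2$, using the defining inequalities of the three Bowen balls to which $z$, $z$, $w$ respectively belong. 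Hence $B_{n_2}(x_2,\omega,\varepsilon)\subset B_{n_1}(x_1,\omega,3\varepsilon)$: the ``smaller'' (higher level) ball sits inside the triple of the ``bigger'' one.

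Next I stratify $\mathcal F$ by level: $\mathcal F_n:=\{B\in\mathcal F:B=B_n(x,\omega,\varepsilon)\text{ for some }x\}$. Inductively let $\mathcal G_n$ be a maximal pairwise disjoint subfamily of those $B\in\mathcal F_n$ which are also disjoint from every ball previously chosen in $\mathcal G_1\cup\ldots\cup\mathcal G_{n-1}$; existence of such a maximal family follows from Zorn's lemma applied to the collection of pairwise disjoint subfamilies with this property, ordered by inclusion (which accounts for the label of the lemma, since $\mathcal F$ need not be countable). Set $\mathcal G:=\bigcup_{n\ge 1}\mathcal G_n$; by construction the balls in $\mathcal G$ are mutually disjoint.

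Finally, given any $B=B_n(x,\omega,\varepsilon)\in\mathcal F$, either $B\in\mathcal G$ (in which case $B\subset B_n(x,\omega,3\varepsilon)$), or maximality at stage $n$ forces $B$ to meet some $B_k(y,\omega,\varepsilon)\in\mathcal G_k$ with $k\le n$; the tripling property then yields $B\subset B_k(y,\omega,3\varepsilon)$, which lies in the union claimed on the right. I do not anticipate a genuine obstacle here; the only mildly subtle point is recognising that the level index $n$ orders Bowen balls along a \emph{common} trajectory $\omega$ by reverse inclusion, so that the classical Vitali selection scheme transfers verbatim and the factor $3$ is inherited from the ordinary triangle inequality applied iterate by iterate along $\omega$.
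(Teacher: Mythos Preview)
Your argument is correct and is essentially the approach the paper has in mind: the paper also isolates the same tripling property (if $B_n(x,\omega,\varepsilon)\cap B_p(x',\omega,\varepsilon)\ne\emptyset$ with $n\le p$ then $B_p(x',\omega,\varepsilon)\subset B_n(x,\omega,3\varepsilon)$) and then invokes Zorn's Lemma, citing the Vitali-type covering argument of Ma--Wen. Your level-by-level selection makes explicit the point that the ball in $\mathcal G$ meeting a discarded $B_n$ must have level $\le n$, which the paper's terse sketch leaves to the reader via the reference; otherwise the two proofs coincide.
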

 
 \begin{proof}
 The proof follows by applying Zorn Lemma to a partially ordered set of families of Bowen balls in which every chain (totally ordered set) has an upper bound, similarly to Lemma 1 of \cite{CRAS}. Here it is important however that all the Bowen balls in $\cal F$ are along the same trajectory $\om$. In this way, if $B_n(x, \om, \vp) \cap B_p(x', \om, \vp) \ne \emptyset$, then if $n \le p$, we conclude from the triangle inequality that $d(f_{\om_j\ldots \om_0}(x), f_{\om_j\ldots \om_0}(x')) \le 3\vp, \ 0 \le j \le n-1$, so $B_p(x', \om, \vp) \subset B_n(x, \om, 3\vp)$. Thus we  extract a disjointed collection $\cal G \subset \cal F$ s.t $3\cal G$ covers the union of sets of $\cal F$.
 \end{proof}  
 
 \begin{remark}\label{nomad}
\ a) If the collection $\cal F$ from Lemma \ref{zorn} consists of Bowen balls along a \textbf{finite set of trajectories} $\cal A \subset \Sigma_m^+$, then the conclusion still holds, with the factor 3 replaced by $M(\cal A)$ depending on $Card(\cal A)$.

\ b) However, Lemma \ref{zorn} \textbf{does not hold} in general if the Bowen balls in $\cal F$ correspond to an \textbf{infinite set of trajectories} $\cal A\subset \Sigma_m^+$. Indeed, in this case we cannot apply the triangle inequality in the proof of Lemma \ref{zorn}, nor can we adjust it with a finite constant $M$. 

For example, assume that $f_j, 1 \le j \le m$ are hyperbolic on  $\Lambda$, have the same unstable manifold $W^u(x)$ at $x$ and are conformal on $W^u(x), x \in \Lambda$. Bowen balls at $x$ along trajectories are tubular neighbourhoods along stable manifolds at $x$, and $\mathop{\inf}\limits_n\frac{|Df_{\om_n\ldots \om_1}|_{E^u(x)}|}{|Df_{\eta_n\ldots \eta_1}|_{E^u(x)}|}$ may be  0, for $\om \ne \eta$.  Then there is no constant $M>0$ s.t  $B_n(x, \om, \vp) \subset B_n(x, \eta, M\vp)$  for all $\eta, \om \in \Sigma_m^+, n \ge 1$. The widths of tubular neighbourhoods $B_n(x, \om, \vp)$ are not comparable for $\om \in \Sigma_m^+$. In this case Lemma \ref{zorn} is not true.
$\hfill\square$   
\end{remark}

 \begin{theorem}
Let $\mu$ be a  probability measure on $X$ and $G$ a semigroup generated by the finite set $G_1$ of maps on $X$. Let $\alpha >0$ and  a subset $Y \subset X$ so that $$h_\mu^u(x, G_1) \le \alpha, \forall x \in Y$$ Then, for any $\om \in \Sigma_m^+$,  \ $h(Y, G_1, \om) \le \alpha, \ \text{and} \ \ h^A(Y, G_1) \le \alpha.$
 \end{theorem}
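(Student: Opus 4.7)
The plan is to first prove the trajectory-entropy bound $h(Y, G_1, \om) \le \alpha$ for an arbitrary fixed $\om \in \Sigma_m^+$, and then deduce $h^A(Y, G_1) \le \alpha$ immediately from Theorem \ref{compeat} applied to $\Phi = \mathbf{0}$. The trajectory case will be handled by a Vitali-type disjointification using Lemma \ref{zorn}, combined with the measure lower bound extracted from $h^u_\mu(x,G_1) \le \alpha$.

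Fix $\om \in \Sigma_m^+$, $\vp>0$, and choose $\beta > \beta' > \alpha$. First I would observe that for each fixed $x$ the quantity $\liminf_n \sup_{|\eta|=n}\frac{\log \mu(B_n(x,\eta,\vp))^{-1}}{n}$ is monotone nondecreasing as $\vp \searrow 0$, with limit $h^u_\mu(x,G_1) \le \alpha < \beta'$. Hence for every $x \in Y$ and every integer $N \ge 1$ there exists $n_x = n_x(N,\vp) \ge N$ such that
$$\mu(B_{n_x}(x,\eta,\vp)) > e^{-n_x\beta'} \quad \text{for all } \eta \in \Sigma_m^+,$$
and in particular for $\eta = \om$. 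The family $\mathcal F_N := \{B_{n_x}(x,\om,\vp) : x \in Y\}$ therefore covers $Y$. Since all balls in $\mathcal F_N$ are along the same trajectory $\om$, Lemma \ref{zorn} applies and yields a pairwise disjoint subfamily $\mathcal G \subset \mathcal F_N$ whose $3\vp$-dilations still cover $Y$. Disjointness and the measure lower bound force $\mathcal G$ to be countable, since $\mu$ is a finite measure.

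Next I would estimate the amalgamated sum for the trajectory pressure along $\om$. By disjointness,
$$\sum_{B_{n_x}(x,\om,\vp)\in\mathcal G} e^{-n_x\beta'} \;\le\; \sum_{B\in\mathcal G}\mu(B) \;\le\; 1,$$
and therefore, using $n_x \ge N$,
$$\sum_{B_{n_x}(x,\om,\vp)\in\mathcal G} e^{-n_x\beta} \;\le\; e^{-N(\beta-\beta')}\sum_{\mathcal G} e^{-n_x\beta'} \;\le\; e^{-N(\beta-\beta')}.$$
Since $\{B_{n_x}(x,\om,3\vp) : B_{n_x}(x,\om,\vp)\in\mathcal G\}$ is an admissible covering family for the trajectory pressure along $\om$ at scale $3\vp$ and minimal length $N$, this gives $M_N(\beta,\mathbf{0},Y,G_1,\om,3\vp) \le e^{-N(\beta-\beta')}$. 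Letting $N\to\infty$ yields $m(\beta,\mathbf{0},Y,G_1,\om,3\vp) = 0$, whence $P(\mathbf{0},Y,G_1,\om,3\vp) \le \beta$. Sending $\vp\to 0$ and then $\beta \searrow \alpha$ gives $h(Y,G_1,\om) \le \alpha$. Finally, Theorem \ref{compeat} with $\Phi = \mathbf{0}$ gives $h^A(Y,G_1) \le h(Y,G_1,\om) \le \alpha$.

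The main obstacle is that Lemma \ref{zorn} crucially requires all Bowen balls to be aligned with a single trajectory; as Remark \ref{nomad}(b) stresses, no uniform Vitali-type disjointification exists for families along infinitely many trajectories, since the widths of $B_n(x,\om,\vp)$ and $B_n(x,\eta,\vp)$ are generally incomparable when $\om \ne \eta$. This is precisely why the proof must first pass through the trajectory entropy and only afterwards deduce the amalgamated entropy bound via the comparison in Theorem \ref{compeat}; a direct disjointification at the level of $h^A$ would not be available.
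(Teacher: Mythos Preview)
Your proof is correct and follows essentially the same strategy as the paper: fix a single trajectory $\om$, extract from the hypothesis a measure lower bound $\mu(B_{n_x}(x,\om,\vp)) \ge e^{-n_x\beta'}$ at suitable times, apply the Vitali-type Lemma \ref{zorn} (valid precisely because all balls are aligned along the same $\om$), sum the disjoint measures, and conclude via Theorem \ref{compeat}. The only notable difference is organizational: the paper stratifies $Y$ into Borel pieces $Y_{k,p}$ (uniformizing in both $\vp$ and the time sequence) and then invokes the $\sigma$-stability of trajectory pressure to reassemble, whereas you fix the scale $\vp$ from the outset, use the monotonicity of the $\liminf$ in $\vp$ to get the measure bound directly for every $x\in Y$, and introduce the auxiliary $\beta'\in(\alpha,\beta)$ to force $m(\beta,\cdot)=0$ rather than merely $m(\beta,\cdot)<\infty$; this avoids the decomposition entirely and is a mild streamlining of the same argument.
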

 
 \begin{proof}
 Let $\beta > \alpha$ arbitrary. We know that for every $x\in Y$, $h_\mu^u(x, G_1) \le \alpha < \beta$. For arbitrary integers $k, p \ge 1$,  define the Borel set $$Y_{k, p}:= \{y \in Y, \exists  \ (n_s)_{s\ge 1},  n_s \ge p,    \mathop{\sup}\limits_{|\om| =n_s}\frac{\log \mu(B_{n_s}(y, \om, \vp))^{-1}}{n_s} \le \beta, s \ge 1,  \vp \in (0, 1/k]\}$$
 Hence if $y \in Y_{k, p}$, then for the associated sequence $\{n_s\}_{s\ge 1}$ with $n_s \ge p, s \ge 1$, and for any trajectory $\om \in \Sigma_m^+$, we have that for every $s \ge 1$ and $\vp \in (0, 1/k]$,
 \begin{equation}\label{mubns}
 \mu(B_{n_s}(y, \om, \vp)) \ge e^{-n_s\beta}
 \end{equation}
Let us  fix  $\om \in \Sigma_m^+$, integers $k, p$ and let  $\vp \in (0, 1/k]$. Let also $N >p$ arbitrary. Consider the cover $\cal F$ of $Y_{k, p}$ with all the Bowen balls $B_{n_s}(x, \om, \vp)$ for $x \in Y_{k, p}$ and $n_s \ge N >p$ for $s \ge 1$. 
Then since all the balls in $\cal F$ correspond to one trajectory $\om$, we  apply Lemma \ref{zorn}  to obtain a disjointed subcollection $\cal G \subset \cal F$, so that the family $3\cal G$ consisting of  $B_{n_s}(x, \om, 3\vp)$ for all  $B_{n_s}(x, \om, \vp) \in \cal G$,  covers the set $Y_{k, p}$.  We can assume $\cal G$ is countable. 
Then, from (\ref{mubns}) and using that $\cal G$ has mutually disjoint balls,
 $$\mathop{\sum}\limits_{\cal G} e^{-n_s \beta} \le \mathop{\sum}\limits_{\cal G} \mu(B_{n_s}(x, \om, \vp)) < 1$$
 Thus since $Y_{k, p} \subset \mathop{\bigcup}\limits_{B_{n_s}(x, \om, \vp) \in \cal G} B_{n_s}(x, \om, 3\vp)$ and as all the integers $n_s$ are larger than  $N$ which is arbitrarily large,  we obtain from the definition of trajectory pressure that, $h(Y_{k, p}, G_1, \om, 3\vp) \le \beta$. Then if $\vp \to 0$, we get $h(Y_{k, p}, G_1, \om) \le \beta$. Using Theorem \ref{amal} and as $Y = \mathop{\cup}\limits_{k, p} Y_{k, p}$,  one obtains $h(Y, G_1, \om) \le \beta$. But $\beta>\alpha$ is arbitrary, hence $h(Y, G_1, \om) \le \alpha$,  $\forall \om \in \Sigma_m^+$. 
So from Theorem \ref{compeat},  $h^A(Y, G_1) \le \alpha.$
 \end{proof}
 
 We now define the local exhaustive entropy for a probability measure $\mu$ on $X$. 
 
 \begin{definition}\label{localexh}
 The \textbf{local exhaustive entropy} of $\mu$ at $x \in X$ is defined as:
 $$h^{+}_\mu(x, G_1) := \mathop{\lim}\limits_{\vp \to 0} \mathop{\u{\lim}}\limits_{n \to \infty} \frac{\log \mu(B_n^+(x, G_1, \vp))^{-1}}{n}$$
 \end{definition}

 We study first the local exhaustive entropy for general probabilities on $X$,
and then at the end of this Section, we study the case of marginal measures $\nu$ 
on X.

 For \textbf{general probability measures} $\mu$ on $X$, we relate their local exhaustive
entropy with the exhaustive topological entropy on sets $Y$:
 
 \begin{theorem}\label{leexh}
 Let a semigroup of continuous maps on the compact metric space $X$, generated by a finite set $G_1$. Consider a probability measure $\mu$ on $X$, and let a Borel subset $Y \subset X$ such that $\mu(Y) >0$. Assume that $$h^{+}_\mu(x, G_1) \ge \alpha, \forall x\in Y.$$ Then, $h^+(Y, G_1) \ge \alpha.$
 \end{theorem}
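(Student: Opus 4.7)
The plan is to adapt the covering argument in the proof of Theorem \ref{ule} to the exhaustive setting: I would replace trajectory Bowen balls $B_n(x,\om,\vp)$ by exhaustive balls $B_n^+(x,G_1,\vp)$, and convert the pointwise lower bound $h^+_\mu \ge \alpha$ on $Y$ into a lower bound on the exhaustive topological entropy via $\sigma$-subadditivity of $\mu$.

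Fix an arbitrary $\beta<\alpha$. For each pair of positive integers $k,p$, I would introduce the Borel set
$$Y_{k,p}:=\Bigl\{y\in Y \ :\ \mu\bigl(B_n^+(y,G_1,\vp)\bigr) < e^{-\beta n} \text{ for all } n\ge p \text{ and all } \vp\in(0,1/k]\Bigr\}.$$
Since $h^+_\mu(y,G_1)\ge\alpha>\beta$ for every $y\in Y$, the definition of $h^+_\mu$ together with the monotonicity in $\vp$ (smaller $\vp$ gives smaller $B_n^+$ and hence smaller $\mu(B_n^+)$) implies $Y=\bigcup_{k,p\ge 1}Y_{k,p}$. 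By countable subadditivity of $\mu$ there exist $k_0,p_0\ge 1$ with $\mu(Y_{k_0,p_0})>0$.

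Next I would fix $\vp\in(0,1/k_0]$ and an arbitrary integer $N\ge p_0$, and consider any countable cover $\mathcal F=\{B_{n_i}^+(x_i,G_1,\vp)\}_i$ of $Y_{k_0,p_0}$ with $n_i\ge N$ for all $i$ (such countable subcovers exist by the Lindel\"of property of the compact metric space $X$, since each $B_n^+(x,G_1,\vp)$ is open as a finite union of open Bowen balls). By the defining property of $Y_{k_0,p_0}$,
$$\sum_i e^{-\beta n_i} \ge \sum_i \mu\bigl(B_{n_i}^+(x_i,G_1,\vp)\bigr) \ge \mu\Bigl(\bigcup_i B_{n_i}^+(x_i,G_1,\vp)\Bigr) \ge \mu(Y_{k_0,p_0})>0.$$
Taking the infimum over such covers gives $M_u^+(\textbf{0},Y_{k_0,p_0},G_1,\beta,N,\vp)\ge \mu(Y_{k_0,p_0})$, and passing to the limit $N\to\infty$ and then $\vp\to 0$ yields $P_u^+(\textbf{0},Y_{k_0,p_0},G_1)\ge\beta$, i.e.\ $h^+(Y_{k_0,p_0},G_1)\ge \beta$. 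By the monotonicity of the exhaustive pressure under set inclusion (the exhaustive analogue of Theorem \ref{amal}(a), immediate from the Carath\'eodory--Pesin construction), $h^+(Y,G_1)\ge h^+(Y_{k_0,p_0},G_1)\ge\beta$, and since $\beta<\alpha$ was arbitrary, $h^+(Y,G_1)\ge\alpha$.

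The argument is essentially routine once the right sets $Y_{k,p}$ are introduced, and the only delicate points are the Borel measurability of $Y_{k,p}$ (which follows from the standard semicontinuity of $y\mapsto \mu(B_n^+(y,G_1,\vp))$ for fixed $n,\vp$) and the extraction of countable subcovers. In contrast to Theorem \ref{ule}, no disjointification analogous to Lemma \ref{zorn} is required here, since the inequality invokes $\sigma$-subadditivity of $\mu$ directly on the chosen cover, and the measure bound is applied to each exhaustive ball without any need to compare balls along distinct trajectories.
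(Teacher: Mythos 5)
Your proposal is correct and follows essentially the same route as the paper's proof: the same decomposition of $Y$ into sets $Y_{k,p}$ on which $\mu(B_n^+(y,G_1,\vp))\le e^{-\beta n}$ uniformly for $n\ge p$ and $\vp\le 1/k$, followed by summing the measure bound over a countable cover by exhaustive balls and letting $N\to\infty$, $\vp\to 0$, $\beta\to\alpha$. The only differences are cosmetic (you finish with monotonicity of $h^+$ under inclusion rather than the countable-union supremum property, and you make explicit the monotonicity in $\vp$ and the $\sigma$-subadditivity step, which the paper leaves implicit); no disjointification is needed in either version.
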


 \begin{proof}
 Let  an arbitrary $\beta < \alpha$. For every $x \in Y$, $h^{+}_\mu(x, G_1) \ge \alpha >\beta$.  For arbitrary  integers $k , p \ge 1$, define then the Borel set $$Y_{k, p}:= \{y \in Y,  \frac{\log \mu(B_{n}^+(y, G_1, \vp))^{-1}}{n} \ge \beta, \forall n \ge p,  \vp \in (0, 1/k]\}$$
 We have that $Y = \mathop{\bigcup}\limits_{k, p} Y_{k, p}$, and $Y_{k, p} \subset Y_{k, p+1}, Y_{k, p} \subset Y_{k+1, p}$, so there exists some integer $k_0$ such that for $k, p > k_0$, we have $\mu(Y_{k, p}) > \mu(Y)/2 >0$.
 The definition of $Y_{k, p}$ implies that for any $x \in Y_{k, p}$ and every $n \ge p$ and $\vp \in (0, 1/k]$,
 $$\mu(B_n^+(y, G_1, \vp) \le e^{-n\beta}$$
 Then let an arbitrary $\vp \in (0, 1/k]$ and an integer $N>p$ and a countable cover $\cal G$ of $Y_{k, p}$ with exhaustive balls of type $B^+_n(x, G_1, \vp)$ with all the integers $n$ larger than $N$. 
Then using the above measure estimates, we obtain:
 $$\mathop{\sum}\limits_{\cal G}e^{-\beta n} \ge \mathop{\sum}\limits_{\cal G} \mu(B_n^+(x, G_1, \vp)) \ge \mu(Y_{k, p}) > \mu(Y)/2 >0$$
 But since the cover $\cal G$ is arbitrary and $N$ is arbitrarily large, it follows that $h^+(Y_{k, p}, G_1, \vp) \ge \beta$. 
 Thus by taking $\vp \to 0$, $h^+(Y_{k, p}, G_1) \ge \beta$. So from Theorem \ref{proptexh}, $h^+(Y, G_1) = \mathop{\sup}\limits_{k, p} h^+(Y_{k, p}, G_1) \ge \beta$. But $\beta < \alpha$ is arbitrary, thus $h^+(Y, G_1) \ge \alpha.$
 \end{proof}
 
 In the next Theorem, we assume  the maps $f_j, 1 \le j \le m$ are conformal and expanding on a compact set $X \subset \bb R^D$. This means that $f_j$ is $\mathcal C^1$ and  $Df_j(x) = a_j(x) A_j(x), \forall x \in X, 1 \le j \le m$, where $A_j(x)$ is an isometry on $\bb R^D$ and $a_j(x) \in \bb R$, and $|a_j(x)| > \gamma >1, x \in X, 1 \le j \le m$ (see for eg \cite{Pe}). 
  
 \begin{theorem}\label{upesexh}
 Assume the maps $f_j, 1 \le j \le m$ from $G_1$ are conformal and expanding on a compact set $X \subset \bb R^D$, and let a probability $\mu$ on $X$. Let  $Y \subset X$ and $\alpha>0$. If $$h^{+}_\mu(x, G_1) \le \alpha, \forall x \in Y,$$ then $h^{+}(Y, G_1) \le \alpha.$
 \end{theorem}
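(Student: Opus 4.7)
The plan is to mirror the strategy of Theorem \ref{leexh}, but the upper bound direction forces the use of a Vitali-type covering argument. The key new ingredient, enabled by the conformal expanding hypothesis, is that the exhaustive Bowen ball $B_n^+(x, G_1, \vp)$ is, up to bounded distortion, a genuine Euclidean ball centred at $x$; by Remark \ref{nomad} b) this comparison is precisely what fails for general hyperbolic semigroups.

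First I would fix $\beta > \alpha$ and reduce to positive-measure Borel subsets of $Y$ carrying explicit measure lower bounds. Since $\vp \mapsto \limsup_n \frac{\log \mu(B_n^+(x, G_1, \vp))^{-1}}{n}$ is monotone non-increasing in $\vp$, the assumption $h^+_\mu(x, G_1) \le \alpha$ gives that this limsup is at most $\alpha < \beta$ for \emph{every} $\vp>0$. Hence for each $k, p \ge 1$ the set
$$Y_{k,p} := \{y \in Y : \mu(B_n^+(y, G_1, 1/k)) \ge e^{-n\beta}, \ \forall n \ge p\}$$
is Borel (since $B_n^+(y, G_1, 1/k)$ is open), $Y = \bigcup_{k,p \ge 1} Y_{k,p}$, and by countable additivity there exist arbitrarily large $k_0, p_0$ with $\mu(Y_{k_0, p_0}) > \mu(Y)/2 > 0$.

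Next I would exploit the conformal expanding structure. Writing $Df_j(x) = a_j(x) A_j(x)$ with $A_j$ an isometry and $|a_j(x)| > \gamma > 1$, and using bounded distortion of $f_{\om_{n-1}\ldots\om_0}$ on small balls (which follows from compactness of $X$ and $\cal C^1$-regularity), each Bowen ball $B_n(x, \om, \vp)$ is comparable, with a constant $K > 1$ independent of $n, x, \om, \vp$, to the Euclidean ball of radius $\vp \cdot \|Df_{\om_{n-1}\ldots\om_0}(x)^{-1}\|$ centred at $x$. All these balls being centred at $x$, their union is comparable to the single Euclidean ball $B(x, R_n(x, \vp))$ with $R_n(x, \vp) := \vp \max_{|\om|=n}\|Df_{\om_{n-1}\ldots\om_0}(x)^{-1}\|$, and $R_n(x, \vp) \to 0$ by expansion. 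In particular there exists a constant $C = C(K)$ such that the Euclidean $5$-fold dilate satisfies
$$5 \cdot B_n^+(x, G_1, \vp) \subset B_n^+(x, G_1, C\vp), \quad \forall \vp \ \text{small.}$$

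With this in hand I would run a standard Vitali argument. For fixed $k_0, p_0$ with $\mu(Y_{k_0,p_0}) > 0$ and an arbitrary integer $N \ge p_0$, the family $\cal F = \{B_n^+(x, G_1, 1/k_0) : x \in Y_{k_0,p_0}, n \ge N\}$ covers $Y_{k_0,p_0}$ by sets comparable to Euclidean balls of uniformly bounded diameter. The $5r$-covering lemma in $\bb R^D$ produces a countable disjoint subfamily $\cal G \subset \cal F$ such that $Y_{k_0,p_0} \subset \bigcup_{(x,n) \in \cal G} 5 B_n^+(x, G_1, 1/k_0) \subset \bigcup_{(x,n) \in \cal G} B_n^+(x, G_1, C/k_0)$. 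Disjointness of $\cal G$ inside $X$ and the definition of $Y_{k_0,p_0}$ yield
$$\mathop{\sum}\limits_{(x,n) \in \cal G} e^{-n\beta} \le \mathop{\sum}\limits_{(x,n) \in \cal G} \mu(B_n^+(x, G_1, 1/k_0)) \le \mu(X) = 1.$$
For any $\lambda > \beta$, from $e^{-n\lambda} \le e^{-N(\lambda-\beta)} e^{-n\beta}$ I obtain $M^+_l(0, Y_{k_0,p_0}, G_1, \lambda, N, C/k_0) \le e^{-N(\lambda-\beta)} \to 0$ as $N \to \infty$, so $P^+_l(0, Y_{k_0,p_0}, G_1, C/k_0) \le \beta$. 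Letting $k_0 \to \infty$ gives $h^+(Y_{k_0,p_0}, G_1) \le \beta$; then countable subadditivity (Theorem \ref{proptexh}) gives $h^+(Y, G_1) \le \beta$, and finally $\beta \downarrow \alpha$ concludes.

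The main obstacle is the geometric comparison of the exhaustive balls with Euclidean balls: this step uses the conformal expanding hypothesis in a critical way, since otherwise (as Remark \ref{nomad} b) emphasizes) the unions of Bowen balls along infinitely many trajectories can have incomparable tubular shapes and no Vitali-type selection is available.
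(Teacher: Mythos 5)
Your core strategy coincides with the paper's: identify $B_n^+(x,G_1,\vp)$, via conformality and expansion, with (something comparable to) a Euclidean ball centred at $x$, extract a disjoint subfamily by a covering lemma whose bounded enlargements still cover, and bound the Carath\'eodory sums through $\sum_{\cal G}e^{-n\beta}\le\sum_{\cal G}\mu(B_n^+)\le 1$ (the paper invokes the Besicovitch covering theorem where you use the $5r$-lemma; this is the same geometric point, and Remark \ref{nomad} b) is indeed the reason it is confined to this setting). However, two of your bookkeeping steps do not work as written. First, the paper's $h^+_\mu(x,G_1)$ is defined with $\mathop{\u{\lim}}_{n\to\infty}$, a \emph{lower} limit, so the hypothesis $h^+_\mu(x,G_1)\le\alpha<\beta$ only yields, for each $\vp>0$, the bound $\mu(B_n^+(x,G_1,\vp))\ge e^{-n\beta}$ along a subsequence of times $n_s\to\infty$, not for all $n\ge p$. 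Your sets $Y_{k,p}$, which demand the bound for every $n\ge p$, therefore need not exhaust $Y$; the identity $Y=\bigcup_{k,p}Y_{k,p}$ is exactly where you use the limsup reading, so as written you prove the statement only under a stronger hypothesis. The paper instead covers by exhaustive balls at the good times $n_s\ge N$, which is all the covering argument needs, so the repair is immediate but necessary.

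Second, the passage from $P^+_l(0,Y_{k_0,p_0},G_1,C/k_0)\le\beta$ to $h^+(Y,G_1)\le\beta$ conflates the set with the scale: for a fixed pair $(k,p)$ your argument controls the exhaustive entropy of $Y_{k,p}$ only at the single scale $C/k$, and since $h^+(\cdot,G_1,\vp)$ increases as $\vp\to0$ this does not bound $h^+(Y_{k,p},G_1)$; ``letting $k_0\to\infty$'' changes the set simultaneously with the scale. Either build the requirement ``for all $\vp\in(0,1/k]$'' into the definition of the pieces (as the paper does), so each fixed piece admits good covers at every small scale, or do the decomposition and the countable-stability step at each fixed scale $\vp$ and only afterwards let $\vp\to0$; in fact no decomposition is needed in this direction at all, since for every $y\in Y$, every $\vp$ and every $N$ there is a good time $n\ge N$, and the disjointed subfamily then gives $\sum e^{-\lambda n}\le e^{-N(\lambda-\beta)}$ for $\lambda>\beta$, hence $h^+(Y,G_1,5\vp)\le\beta$ directly. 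Two smaller points: your reduction to a piece with $\mu(Y_{k_0,p_0})>\mu(Y)/2>0$ silently assumes $\mu(Y)>0$, which is not a hypothesis and, unlike in Theorem \ref{leexh}, is not needed here; and uniform-in-$n$ distortion constants really want more than $\mathcal C^1$ regularity, though the paper's own proof is no more careful on that point.
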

 
  \begin{proof}
Let an arbitrary $\beta >\alpha$, and for any $k, p\ge 1$, define the Borel set in $X$, $$Y_{k, p}:= \{y \in Y,  \exists \{n_s\}_{s\ge 1}, n_s \ge p,  \frac{\log\mu(B_{n_s}^+(y, G_1, \vp))^{-1}}{n_s} <\beta, \forall s \ge 1, \vp \in (0, \frac 1k]\}$$
Therefore, if $y \in Y_{k, p}$ and $\vp \in (0, \frac 1k]$ and $s \ge 1$, we have 
\begin{equation}\label{nsb}
\mu(B_{n_s}^+(y, G_1, \vp)) \ge e^{-\beta n_s}
\end{equation}
Now, if all functions $f_j\in G_1$ are conformal and expanding on $X$, then also any composition of them is conformal and expanding on $X$.  Thus the Bowen balls $B_n(x, \om, \vp)$ are usual balls centered at $x$ and of radius $|Df_{\om_n\ldots \om_1}(x)|^{-1}$, for any trajectory $\om \in \Sigma_m^+$. Thus for any $x\in X$, the exhaustive ball $B_n^+(x, G_1, \vp)$ is a union of $m^n$ balls centered at $x$. This implies that $B_n^+(x, G_1, \vp)$ is actually a ball centered at $x$ and of radius $\sup\{|Df_{\om_n\ldots\om_1}(x)|^{-1}, \om \in \Sigma_m^+\}$. 

  Now let us take an arbitrary integer $N>p$ and the cover $\cal F$ of $Y_{k, p}$ with  balls $B_{n_s}^+(x, G_1, \vp)$, for all $x \in Y_{k, p}$ and all $s\ge 1$ such that $n_s > N>p$. From above we know that $B_{n_s}^+(x, G_1, \vp)$ is  a regular ball centered at $x$. Now we  apply Besicovitch Covering Theorem (see \cite{He}) to extract a disjointed subcollection $\cal G \subset \cal F$ so that $$Y_{k, p} \subset \mathop{\bigcup}\limits_{B_{n_s}^+(x, G_1, \vp) \in \cal G} B_{n_s}^+(x, G_1, 5\vp)$$
But all integers $n_s$ corresponding to balls in $\cal G$, are larger than $N >p$.
So from (\ref{nsb}),
$$\mathop{\sum}\limits_{\cal G} e^{-\beta n_s} \le \mathop{\sum}\limits_{\cal G} \mu(B_{n_s}^+(x, G_1, \vp) < 1,$$
as  the balls $B_{n_s}^+(x, G_1, \vp) \in \cal G$ are disjointed. The family $\{B_{n_s}^+(x, G_1, 5\vp), B_{N_s}^+(x, G_1, \vp) \in \cal G\}$  covers then $Y_{k, p}$, and $N$ is arbitrary. So $h^+(Y_{k, p}, G_1, \vp) \le \beta$, and for any $k, p\ge 1$, $h^+(Y_{k, p}, G_1) \le \beta$. But $\beta>\alpha$ is arbitrary, so from Theorem \ref{proptexh},  $h^+(Y, G_1) \le \alpha.$

\end{proof}

For \textbf{marginal measures} $\nu$ on $X$ of ergodic $F$-invariant measures $\hat\mu$,  we estimate the local exhaustive entropy $h_\nu^+(x, G_1)$ and lower amalgamated entropy $h_\nu^l(x, G_1)$.

\begin{theorem}\label{expex}
Let a finite set $G_1$ that generates the  semigroup $G$ acting on a compact metric space $X$, and let an $F$-invariant ergodic measure $\hat \mu$ on $\Sigma_m^+\times X$, with $\mu:= \pi_{1*}\hat \mu$ on $\Sigma_m^+$ and $\nu:= \pi_{2*}\hat\mu$ on $X$. Then, there is a Borel set $A\subset X$, $\nu(A) = 1$, such that, $$h^+_\nu(x, G_1) \le h_\nu^l(x, G_1) \le h(\hat\mu) - h(\mu), \ x \in A.$$
\end{theorem}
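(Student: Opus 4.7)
The first inequality $h^+_\nu(x,G_1)\le h^l_\nu(x,G_1)$ is immediate from the set inclusion $B_n(x,\om,\vp)\subset B_n^+(x,G_1,\vp)$ valid for every $\om\in\Sigma_m^+$: one has $\nu(B_n^+(x,G_1,\vp))\ge \sup_\om\nu(B_n(x,\om,\vp))$, hence $-\log\nu(B_n^+(x,G_1,\vp))\le \inf_\om[-\log\nu(B_n(x,\om,\vp))]$; taking $\liminf_n$ and then $\vp\to 0$ yields the bound.

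For the main inequality $h^l_\nu(x,G_1)\le h(\hat\mu)-h(\mu)$, the plan is to lift everything to the skew product $(F,\hat\mu)$ on $\Sigma_m^+\times X$ and combine three classical tools: the Brin--Katok theorem for the ergodic system $(F,\hat\mu)$, which gives $-\log\hat\mu(B_n^F((\om,x),\vp))/n\to h(\hat\mu)$ for $\hat\mu$-a.e.\ $(\om,x)$; the Shannon--McMillan--Breiman theorem for $(\sigma,\mu)$, which gives $-\log\mu([\om_0\ldots\om_{n-1}])/n\to h(\mu)$ for $\mu$-a.e.\ $\om$; and the disintegration $\hat\mu=\int_{\Sigma_m^+}\mu_{\om'}\,d\mu(\om')$, with $\mu_{\om'}$ a probability measure on $X$. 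A Fubini argument applied along $\pi_2$ produces the Borel set $A\subset X$ with $\nu(A)=1$ on which, for $\hat\mu_x$-a.e.\ $\om$, both asymptotics hold; this is the $A$ claimed in the theorem.

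For $x\in A$, $\delta>0$, and a doubly generic $\om$ for this $x$, I note that for $\vp$ small the $F$-Bowen ball factors as $B_n^F((\om,x),\vp)=[\om_0\ldots\om_{n-1}]\times B_n(x,\om,\vp)$ (up to finitely many boundary symbols from the symbolic metric), and since $B_n(x,\om,\vp)$ depends only on $\om_0,\ldots,\om_{n-1}$, the disintegration gives
\[ \hat\mu([\om_0\ldots\om_{n-1}]\times B_n(x,\om,\vp))=\int_{[\om_0\ldots\om_{n-1}]}\mu_{\om'}(B_n(x,\om',\vp))\,d\mu(\om'). \]
Dividing the Brin--Katok lower bound $\hat\mu([\om_0\ldots\om_{n-1}]\times B_n(x,\om,\vp))\ge e^{-n(h(\hat\mu)+\delta)}$ by the SMB upper bound $\mu([\om_0\ldots\om_{n-1}])\le e^{-n(h(\mu)-\delta)}$ produces the relative (fiber) estimate
\[ \mu([\om_0\ldots\om_{n-1}])^{-1}\int_{[\om_0\ldots\om_{n-1}]}\mu_{\om'}(B_n(x,\om',\vp))\,d\mu(\om')\ge e^{-n(h(\hat\mu)-h(\mu)+2\delta)}. \]

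The hard part will be converting this conditional (fiber-average) estimate into the marginal bound $\nu(B_n(x,\om,\vp))\ge e^{-n(h(\hat\mu)-h(\mu)+O(\delta))}$ needed to control $h^l_\nu$: the naive restriction of $\nu(B_n(x,\om,\vp))=\int_{\Sigma_m^+}\mu_{\om''}(B_n(x,\om,\vp))\,d\mu(\om'')$ to $\om''\in[\om_0\ldots\om_{n-1}]$ only recovers the weaker bound $\nu(B_n)\ge e^{-n(h(\hat\mu)+\delta)}$, losing a factor of $e^{nh(\mu)}$. To recover the sharper rate I would use the random Brin--Katok theorem (Bogensch\"utz--Kifer) for the random dynamical system with fiber measures $(\mu_{\om'})$ together with the identity $\nu(B_n(x,\om,\vp))=\sum_C\hat\mu([C]\times B_n(x,\om,\vp))$, and argue that summing over the $\approx e^{nh(\mu)}$ typical length-$n$ cylinders provided by SMB for $\mu$, each contributing at the rate $e^{-nh(\hat\mu)}$ by ergodicity of $\hat\mu$, yields $\nu(B_n(x,\om,\vp))\ge e^{-n(h(\hat\mu)-h(\mu)+O(\delta))}$ for an appropriately chosen trajectory $\om$. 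Letting $\vp\to 0$ and $\delta\to 0$ then gives $h^l_\nu(x,G_1)\le h(\hat\mu)-h(\mu)$.
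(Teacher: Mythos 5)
Your first inequality and your overall toolkit (Brin--Katok for $(F,\hat\mu)$, Shannon--McMillan--Breiman for $(\sigma,\mu)$, and the count of roughly $e^{nh(\mu)}$ typical cylinders, each carrying $\hat\mu$-mass about $e^{-nh(\hat\mu)}$) coincide with the paper's, and you have correctly located the difficulty: converting the almost-everywhere product-ball asymptotics into a lower bound at the sharper rate $e^{-n(h(\hat\mu)-h(\mu))}$ for sets attached to a single point $x$. But the step you propose to close this does not work as stated. For a \emph{fixed} trajectory $\om$, the identity $\nu(B_n(x,\om,\vp))=\sum_C\hat\mu([C]\times B_n(x,\om,\vp))$ is correct, yet the claim that each typical cylinder $C$ contributes at rate $e^{-nh(\hat\mu)}$ ``by ergodicity'' is unjustified: Brin--Katok for $F$ controls only the \emph{matched} sets $[\om'_1\ldots\om'_n]\times B_n(x,\om',\vp)$, where cylinder and Bowen ball come from the same trajectory; for a mismatched pair $[C]\times B_n(x,\om,\vp)$ with $C\ne[\om_1\ldots\om_n]$ there is no such lower bound, and these terms can be drastically smaller, since Bowen balls along different trajectories need not be comparable (compare Remark \ref{nomad} b)). Likewise, the relative (random) Brin--Katok theorem you invoke would give fiber asymptotics $\mu_{\om}(B_n(x,\om,\vp))\approx e^{-n(h(\hat\mu)-h(\mu))}$ only for $\hat\mu$-a.e.\ pair $(\om,x)$; to pass to the marginal $\nu$ at a fixed $x$ you would still need this estimate for a set of fibers $\om'$ of substantial $\mu$-measure over that same $x$, which is precisely the point you leave as ``argue that''. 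So the central estimate of the theorem is not established in your proposal.

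The paper aims the counting argument at the \emph{exhaustive} ball rather than at a single Bowen ball: for $x$ in the projection $X(p,\vp)$ of the uniformly good set, the matched sets $[\om_1\ldots\om_s]\times B_s(x,\om,\vp)$, taken over the $N'(p,\vp)\ge(1-\vp)e^{s(h(\mu)-\vp)}$ pairwise disjoint typical cylinders, are pairwise disjoint and all contained in $\Sigma_m^+\times B_s^+(x,G_1,\vp)$, so their masses add and yield $\nu(B_s^+(x,G_1,\vp))\ge(1-\vp)e^{-s(h(\hat\mu)-h(\mu)+2\vp)}$ directly; this gives the bound on $h^+_\nu$ at once, and in the same display (\ref{afara}) the paper carries the identical lower bound over to $\sup_{|\om|=s}\nu(B_s(x,\om,\vp))$, which is what controls $h^l_\nu$. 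The full-measure set $A$ is then built explicitly as a countable union and intersection of the projections $X(p(\vp^n),\vp^n)$, with no appeal to disintegration. To repair your argument you should either redirect the cylinder count into $B_n^+(x,G_1,\vp)$ in this way, or supply a genuine argument that a single trajectory $\om$ (depending on $x$ and $n$) already captures $\nu$-mass at the rate $e^{-n(h(\hat\mu)-h(\mu))}$; summing over cylinders against one fixed $\om$ does not achieve this.
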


\begin{proof}
Since $\hat \mu$ is $F$-invariant ergodic on $\Sigma_m^+\times X$ and $F(\om, x) = (\sigma \om, f_{\om_0}(x))$, then also $\mu = \pi_{1*}\hat\mu$ is $\sigma$-invariant ergodic on $\Sigma_m^+$.
By  Brin-Katok Lemma (\cite{BK}) applied to $\hat \mu$ and $F$, and as the Bowen balls for $F$ are of type $[\om_1\ldots \om_n]\times B_n(x, \om, \vp)$, we know that for $\hat\mu$-a.e $(\om, x) \in \Sigma_m^+\times X$, $$\mathop{\lim}\limits_{\vp \to 0}\mathop{\liminf}\limits_{n \to \infty} \frac{-\log \hat\mu([\om_1\ldots \om_n]\times B_n(x, \om, \vp))}{n} = \mathop{\lim}\limits_{\vp \to 0}\mathop{\limsup}\limits_{n \to \infty} \frac{-\log \hat\mu([\om_1\ldots \om_n]\times B_n(x, \om, \vp))}{n} = h(\hat \mu).$$

And from the Brin-Katok Lemma applied to the shift-invariant ergodic measure $\mu= \pi_{1*}\hat\mu$ on $\Sigma_m^+$, we have for $\mu$-a.e. $ \om \in \Sigma_m^+$,
$$\mathop{\lim}\limits_{\vp \to 0}\mathop{\liminf}\limits_{n \to \infty} \frac{-\log \mu([\om_1\ldots \om_n])}{n} = \mathop{\lim}\limits_{\vp \to 0}\mathop{\limsup}\limits_{n \to \infty} \frac{-\log \mu([\om_1\ldots \om_n])}{n} = h(\mu). $$

Let us fix $\vp>0, n\ge 1$, and define the following Borel set in $\Sigma_m^+\times X$, $$\hat X(n, \vp)= \{(\om, x), h(\hat \mu) - \vp \le -\frac{1}{n'}\log\hat\mu([\om_1\ldots \om_{n'}]\times B_{n'}(x, \om, \vp))\le h(\hat \mu) + \vp, n' \ge n\}$$
And for integers $p>n$, define also the Borel set in $\Sigma_m^+$,
$$\Sigma(p, \vp):= \{\om \in \pi_1(\hat X(n, \vp)), h(\mu) - \vp \le \frac{-\log\mu([\om_1\ldots \om_{p'}])}{p'} \le h(\mu) + \vp, p' \ge p\}$$
But $\hat X(n, \vp) \subset \hat X(n+1, \vp), n \ge 1$, and $\hat X(n, \vp') \subset \hat X(n, \vp)$ if $0< \vp'<\vp$. So from above, for any $\vp>0$ small, there is $n = n(\vp)$ and $p(\vp) > n(\vp)$, with $\hat\mu(\hat X(n, \vp)) > 1-\vp$ and $\mu(\Sigma(p, \vp)) > 1-\vp$.
For $\vp>0$ and integers $p > p(\vp)$, define  the Borel set in $X$, $$X(p, \vp) := \pi_2\hat X(p, \vp)$$
 
  Let now $x \in X(p, \vp)$ arbitrary. Then, from the definition of $\hat X(p, \vp)$, for any  $s\ge p$,
  \begin{equation}\label{afara}
  \begin{aligned}
  \nu(B_s^+(x, G_1, \vp)) &\ge  \mathop{\sup}\limits_{|\om| =s} \nu(B_s(x, \om, \vp)) \ge \mathop{\sum}\limits_{\om \in \Sigma(p, \vp), \  [\om_1\ldots \om_s] disjointed} \hat\mu([\om_1\ldots \om_s]\times B_s(x, \om, \vp))\\
  &\ge \mathop{\sum}\limits_{\om \in \Sigma(p, \vp), \ [\om_1\ldots \om_s] disjointed} e^{-s(h(\hat\mu) + \vp)}
  \end{aligned}
  \end{equation}
  Moreover,  notice that \ $\mathop{\sum}\limits_{\om \in \Sigma(p, \vp), \  [\om_1\ldots \om_s] disjointed} \mu([\om_1\ldots \om_s]) \ge \mu(\Sigma(p, \vp)) >1-\vp$. 
  Let us denote by $N'(p, \vp)$ the number of terms in the last sum, i.e the number of disjoint cylinders $[\om_1\ldots \om_s]$ with $\om \in \Sigma(p, \vp)$. 
  But, from the  definition of $\Sigma(p, \vp)$, we have that for each of the terms in the last sum, 
$\mu([\om_1\ldots \om_s]) \le e^{-s(h(\mu)-\vp)}$. 
Hence,
$$N'(p, \vp) \ge (1-\vp) e^{s(h(\mu) -\vp)}$$
Thus, from the above inequality and recalling  (\ref{afara}), we obtain for any integer $s >p$,
$$\nu(B_s^+(x, G_1, \vp)) \ge \mathop{\sup}\limits_{|\om| =s} \nu(B_s(x, \om, \vp))  \ge N'(p, \vp) \cdot e^{-s(h(\hat\mu) + \vp)}\ge (1-\vp) e^{-s(h(\hat\mu)-h(\mu) + 2\vp)}$$
  This implies that, for every point $x \in X(p, \vp)$,
  \begin{equation}\label{muep}
  \mathop{\liminf}\limits_{s\to \infty} \frac{-\log\nu(B_s^+(x, G_1, \vp))}{s} \le \mathop{\liminf}\limits_{s\to \infty} \frac{ \mathop{\inf}\limits_{|\om| =s} \log\nu(B_s(x, \om, \vp))^{-1}}{s}  \le  h(\hat\mu) - h(\mu) +2\vp.
  \end{equation}
  
 But  for any $n>1$, $\hat\mu(\hat X(p(\vp^n), \vp^n)) \ge 1-\vp^n$. Hence if $X(\hat\mu, \vp):= \mathop{\bigcup}\limits_{n >1}X(p(\vp^n), \vp^n)$, then $\nu(X(\hat\mu, \vp)) \ge \hat\mu(\hat X(p(\vp^n), \vp^n)) \ge 1-\vp^n, n >1$. This implies that $\nu(X(\hat\mu, \vp)) = 1$.
Define the Borel set $X(\hat\mu) := \mathop{\bigcap}\limits_{n >1} X(\hat\mu, \frac{1}{2^n})$; then from above, $\nu(X(\hat\mu)) = 1$. 
Thus from  (\ref{muep}), for every point $x \in A:=X(\hat\mu)$ we obtain that, $$h_\nu^+(x, G_1) \le h_\nu^l(x, G_1) \le h(\hat \mu) - h(\mu). $$
\end{proof}

 \section{Amalgamated pressure and dimension estimates.}
 
 In the case of generating functions which have a \textbf{saddle-type} hyperbolic behavior on a $G$-invariant compact set $\Lambda\subset \bb R^D$, it turns out that the amalgamated pressure $P^A$ of the unstable multi-potential $\Phi^u$ is useful to give estimates for the Hausdorff dimension of the slices of $\Lambda$ with submanifolds transversal to the stable directions of $f_j, 1 \le j \le m$.  We do not assume the existence of any common stable spaces, nor any common unstable spaces for the generator maps in $G_1$. From Propositions \ref{exp}-\ref{exp1}, it follows that the estimates using the amalgamated pressure $P^A$ are in general better than the ones obtained with the usual pressure of $f_j, 1 \le j \le m$. 
 
Let a standard splitting of $\mathbb R^D, D \ge 1$ as $\mathbb R^D = \mathbb R^d \times \mathbb R^{D-d}$ for some $d \in [1, D]$. Recall that for $\theta>0$, the \textbf{standard horizontal $(d, \theta)$-cone} $H$ in $\mathbb R^D$ is the set $$H = \{w = (w_1, w_2) \in \mathbb R^D = \mathbb R^d \times \mathbb R^{D-d}, |w_2| \le \theta |w_1| \}$$ And the \textbf{standard vertical $(D-d, \theta)$-cone} $V$ in $\mathbb R^D$ is the set, $$V= \{w = (w_1, w_2) \in \mathbb R^D = \mathbb R^d \times \mathbb R^{D-d}, |w_1| \le \theta |w_2| \}$$  
 In general, a \textbf{cone} $K \subset \mathbb R^D$ is defined as the image $L(H)$ or $L(V)$ of a standard cone through an invertible linear map $L: \mathbb R^D \to \mathbb R^D$ (\cite{KH}). If $K = L(H)$ then the $d$-dimensional linear space $L(\mathbb R^d \times \{0\})$ is called the \textit{core} of $K$, and $d$ is the \textit{dimension} of $K$. Similarly if $K = L(V)$, then $L(\{0\}\times \mathbb R^{D-d})$ is the \textit{core} of $K$ and $D-d$ is the \textit{dimension} of $K$.
   
 Assume now that all the maps $f_j$ from $G_1$ are $\mathcal C^2$ smooth and injective on a neighbourhood of a $G$-invariant compact set $\Lambda \subset \bb R^D$, and there exists an integer $d\in [1, D)$ and  cones $C_1(x), C_2(x)$ depending continuously on $x \in \Lambda$, whose closures intersect only at $0$, such that $C_1(x)$ is a linear image of a horizontal cone of dimension $d$, and $C_2(x)$ is a linear image of a vertical cone of dimension $D-d$, for any $x \in \Lambda$. Moreover assume that for any $x \in \Lambda$ and $ 1\le j \le m$, $$D_xf_j(C_1(x)) \subset C_1(f_j(x)), \ C_2(f_j(x)) \subset D_x{f_j}(C_2(x)),$$ and there exists a number $\lambda>1$ such that  for any $x \in \Lambda$ and $1 \le j \le m$, $$|D_x f_j(w)| \ge \lambda|w|, w \in C_1(x), \ \text{and}  \ |D_xf_j^{-1}(w')| \ge \lambda |w'|, w' \in C_2(f_j(x)).$$ Then $C_1(\cdot)$ is called a \textbf{$G_1$-unstable cone field}, and $C_2(\cdot)$ is called a \textbf{$G_1$-stable cone field} over $\Lambda$. 
In this case, all the maps $f_j, 1 \le j \le m$ are hyperbolic of saddle type on $\Lambda$ (similar to \cite{KH}), and for any $x \in \Lambda$, the cone $C_1(x)$ contains  the unstable tangent space $E^u(x, f_j)$ of $f_j$ at $x$,  and the cone $C_2(x)$ contains the stable tangent space $E^s(x, f_j)$ of $f_j$ at $x$, for $1 \le j \le m$. 

In general, for a linear map $L: \mathbb R^D \to \mathbb R^D$ and a linear subspace $E\subset \mathbb R^D$, denote the \textbf{minimal expansion} of $L$ on $E$ (according to \cite{New}) by, 
\begin{equation}\label{minexp}
m(L |  E) := \mathop{\inf}\limits_{v \in E, v \ne 0} \frac{|L(v)|}{|v|}
\end{equation}
 Then, in the above setting define \textbf{the unstable multi-potential of $G_1$} on $\Lambda$,
 \begin{equation}\label{unspot} 
 \Phi^u(x) = ( - \log m(Df_1 | E^u(x, f_1)), \ldots, -\log m(Df_m | E^u(x, f_m))), \ x \in \Lambda.
 \end{equation}
 From above,  it follows that $\Phi^u \in \cal C(\Lambda, \bb R^m)$. 
 
We say that a submanifold $\Delta \subset \mathbb R^D$ is \textbf{transversal} to a cone $K\subset \bb R^D$ of dimension $\kappa\ge 1$, if $\Delta$ is a $(D-\kappa)$-dimensional submanifold transversal to the core of $K$. 

 The following theorem gives estimates for the Hausdorff dimension of slices in $\Lambda$, by using the amalgamated pressure of the unstable multi-potential $\Phi^u$.
 
 \begin{theorem}\label{dim}
 Assume that the  functions $f_j, 1 \le j \le m$ from $G_1$ are hyperbolic and injective on a $G_1$-invariant set $\Lambda$, and there exist $G_1$-unstable and $G_1$-stable cone fields $C_1(\cdot)$ and $C_2(\cdot)$ respectively over $\Lambda$. 
 Then,
   for every $x \in \Lambda$ and $r>0$ small and every submanifold $\Delta \subset B(x, r)$ transversal to the cone $C_2(x)$,  $$HD(\Delta \cap \Lambda) \le t^{uA}_{G_1},$$
 where  $t^{uA}_{G_1}$ is  the unique zero of  the amalgamated pressure function $t \to P^A(t\Phi^u, G_1)$ of the unstable multi-potential of $G_1$ over $\Lambda$.
 \end{theorem}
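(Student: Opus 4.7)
The plan is to adapt the classical Bowen-Ruelle dimension argument to the amalgamated pressure setting. The strategy has three steps: (i) show that $t\mapsto P^A(t\Phi^u,G_1)$ on $\Lambda$ is continuous and strictly decreasing with a unique zero $t^{uA}_{G_1}$; (ii) prove a diameter estimate $\diam(\Delta\cap B_n(y,C,G_1,\vp))\le K\vp\exp(S_n\Phi^u(y,C)+n\tau(\vp))$ for some constant $K$ and function $\tau(\vp)\to 0$ as $\vp\to 0$; (iii) combine these with the Carath\'eodory-type definition of $P^A$ to control the $t$-Hausdorff measure of $\Delta\cap\Lambda$.

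For step (i), Lipschitz continuity of $t\mapsto P^A(t\Phi^u,G_1)$ is immediate from Theorem \ref{oscpressure}(a). For strict monotonicity, the cone-expansion hypothesis gives $m(Df_j|E^u(x,f_j))\ge\lambda>1$, so every component of $\Phi^u$ is at most $-\log\lambda<0$. Hence for $s>t$ and any $(C,y)\in\mathcal X_m$ one has $S_n((s-t)\Phi^u)(y,C)\le-n(s-t)\log\lambda$, which pushes the infimum defining $P^A$ down by at least $(s-t)\log\lambda$, yielding $P^A(s\Phi^u,G_1)\le P^A(t\Phi^u,G_1)-(s-t)\log\lambda$. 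Combined with $P^A(\mathbf{0},G_1)=h^A(\Lambda,G_1)\ge 0$, this shows $P^A(t\Phi^u,G_1)$ decreases strictly from a nonnegative value at $t=0$ to $-\infty$, so $t^{uA}_{G_1}$ is unambiguously defined.

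Step (ii) is the heart of the proof. The geometric idea is that forward-invariance of $C_1$ and backward-invariance of $C_2$ imply each iterate $f_{i_k\cdots i_1}(\Delta)$ remains transversal to $C_2$ at the image point, so $f_{i_n\cdots i_1}(\Delta\cap B_n(y,C,G_1,\vp))$ lies inside $B(f_{i_n\cdots i_1}(y),\vp)$ intersected with a transversal submanifold, hence has diameter $O(\vp)$. Pulling back through the chain of $\mathcal C^2$ maps, bounded distortion along Bowen balls shows that the contraction factor of $(f_{i_n\cdots i_1})^{-1}$ along the tangent direction of $\Delta$ is comparable, up to a correction $e^{n\tau(\vp)}$, to $\prod_{k=1}^{n}m(Df_{i_k}|E^u(f_{i_{k-1}\cdots i_1}(y),f_{i_k}))^{-1}=\exp(S_n\Phi^u(y,C))$, which gives the claimed estimate.

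For step (iii), fix $t>t^{uA}_{G_1}$ and choose $\vp>0$ so small that $P^A(t\Phi^u,G_1,\vp)<-\delta$ for some $\delta>0$ and $t\tau(\vp)<\delta/2$. Since $-\delta/2$ lies strictly above $P^A(t\Phi^u,G_1,\vp)$, one has $m(-\delta/2,t\Phi^u,\Lambda,G_1,\vp)=0$; thus for any $\eta>0$ there exist $N$ large and a countable cover $\Gamma\subset\mathcal X_m$ of $\Lambda$ by Bowen balls $B_{n(C)}(y,C,G_1,\vp)$ with all $n(C)\ge N$ satisfying $\sum_{(C,y)\in\Gamma}\exp(\tfrac{\delta}{2}n(C)+tS_{n(C)}\Phi^u(y,C))<\eta$. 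Keeping only the balls meeting $\Delta\cap\Lambda$ and applying the diameter estimate,
$$\sum_{(C,y)\in\Gamma}\diam\bigl(\Delta\cap B_{n(C)}(y,C,G_1,\vp)\bigr)^t\le K^t\vp^t\sum_{(C,y)\in\Gamma}\exp\bigl(tS_{n(C)}\Phi^u(y,C)+tn(C)\tau(\vp)\bigr)<K^t\vp^t\eta,$$
using $tn(C)\tau(\vp)<\delta n(C)/2$. The diameters tend to $0$ as $N\to\infty$, so the $t$-Hausdorff measure of $\Delta\cap\Lambda$ vanishes and $HD(\Delta\cap\Lambda)\le t$; sending $t\downarrow t^{uA}_{G_1}$ concludes. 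The main obstacle is step (ii): because the unstable spaces $E^u(x,f_j)$ of different generators need not coincide, the least-expansion rate of $Df_{i_n\cdots i_1}$ along $T\Delta$ must be related to the amalgamated product appearing in $\Phi^u$, which requires a Pliss-type distortion argument that exploits the narrowness of the cones $C_1,C_2$ and the $\mathcal C^2$ hypothesis to track how the tangent spaces of the iterates $f_{i_k\cdots i_1}(\Delta)$ drift inside $C_1$.
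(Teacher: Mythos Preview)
Your proposal is correct and follows the same Bowen--Ruelle route as the paper. The only structural difference is in step (iii): the paper first invokes Theorem~\ref{invset} (on the compact $G_1$-invariant set $\Lambda$ the amalgamated pressure equals its upper capacity) so that the covers may be taken with Bowen balls all of the \emph{same} length $n$, giving directly $\sum_{\mathcal F}e^{S_n(t\Phi^u)(y,\omega)}<e^{n\beta}$ for arbitrarily large $n$; your variable-length argument via the raw Carath\'eodory definition works just as well and is arguably more self-contained.

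On step (ii), the paper does not invoke any Pliss-type mechanism and carries no $e^{n\tau(\vp)}$ correction. It asserts a one-sided bounded-distortion inequality with a fixed constant $C$: for $z\in B_n(y,\omega,\vp)$ and unit vectors $v_k\in C_1(f_{\omega_{k-1}\cdots\omega_1}(z))$, one has $\prod_k |Df_{\omega_k}(v_k)|\ge C^{-1}\prod_k m(Df_{\omega_k}|E^u(f_{\omega_{k-1}\cdots\omega_1}(y),f_{\omega_k}))$. This comes straight from the $\mathcal C^2$ hypothesis (Lipschitz derivatives along the Bowen ball) together with the fact that every unit vector in the invariant cone $C_1$ is expanded by $Df_{\omega_k}$ at a rate uniformly comparable to the minimal expansion on $E^u(\cdot,f_{\omega_k})\subset C_1$; one then chains via the Mean Value Theorem. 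Your worry about the non-coincidence of the unstable spaces $E^u(\cdot,f_j)$ for different $j$ is legitimate in spirit, but since all of them lie in the common cone $C_1$ the discrepancy is absorbed into the single constant $C$, and no finer tracking of how $T(f_{\omega_k\cdots\omega_1}\Delta)$ drifts inside $C_1$ is needed.
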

 
 \begin{proof}
 Fix a point $x \in \Lambda$ and a small $r >0$, and denote $W:= \Delta \cap \Lambda$. 
 Since the maps $f_j$ have continuous  $G_1$-unstable and $G_1$-stable cone fields $C_1(\cdot), C_2(\cdot)$ respectively over $\Lambda$, it follows that there exist unstable tangent subspaces $E^u(y, f_j) \subset C_1(y), y \in \Lambda$ (see for eg \cite{KH}), such that $D_y f_j|_{E^u(y, f_j)}$ is a dilation of factor larger than $\lambda >1$, for any $1 \le j \le m$ and $y \in \Lambda$.  
 Then from (\ref{unspot}), every component of $\Phi^u$ is smaller than $-\log \lambda$. 
Thus,  from Theorems \ref{amal} and \ref{oscpressure} it follows that the amalgamated pressure function on $\Lambda$, $t \to P^A(t\Phi^u, G_1)$ is strictly decreasing and it converges to $-\infty$ when $t \to \infty$; therefore this function has a unique zero $t^{uA}_{G_1}$.
  
Now we know that $f_j$ dilates with a factor larger than $\lambda>1$ on the intersection of the unstable cone $C_1(x)$ with a ball $B(x, \vp)$ for $\vp \le r$. Also for $y \in \Lambda$ recall that $$B_n(y, \om, \vp) = \{z \in \Lambda, d(z, y) < \vp,  \ldots, d(f_{\om_{n-1}}\ldots f_{\om_1}(z), f_{\om_{n-1}} \ldots f_{\om_1}(y) ) < \vp\}$$ 
From the $G$-invariance of the stable and unstable cones on $\Lambda$ and as $m(Df_j|E^u(x, f_j)) > \lambda >1, x \in \Lambda, 1 \le j \le m$, and since the functions $f_j$ are $\mathcal C^2$ (so their derivatives are Lipshitz continuous), we obtain a Bounded Distortion property on unstable cones. Namely, there exists a constant $C>0$  so that for any $y \in \Lambda$, $\om \in \Sigma_m^+$,  and  any $n \ge 1$, 
$$\begin{aligned}
 C \cdot m(Df_{\om_1}&| E^u(y, f_{\om_1}))^{-1} \cdot \ldots \cdot m(Df_{\om_n}| E^u(f_{\om_{n-1} \ldots \om_1}(y), f_{\om_n}))^{-1} \ge \\ &\ge  |Df_{\om_1}(z)(v_1)|^{-1}\cdot \ldots \cdot |Df_{\om_n}(f_{\om_{n-1} \ldots \om_1}(z))(v_n)|^{-1},
\end{aligned}$$ for any $z \in B_n(y, \om, \vp)$ and any unitary vectors  $v_1 \in C_1(z), \ldots, v_n\in C_1(f_{\om_{n-1} \ldots \om_1}(z))$.  We apply then the Mean Value Theorem succesively, first for $f_{\om_1}$ on $B(y, \vp)$,  then for $f_{\om_2\omega_1}$ on $ B(f_{\om_1}(y), \vp), \ldots,$ and finally  for $f_{\om_{n}}$ on $ B(f_{\om_{n-1}\ldots \om_1}(y), \vp)$. Thus, since the submanifold $\Delta \subset B(x, r)$ is transversal to the $G_1$-stable cone $C_2(x)$, it follows from above that the intersection of $\Delta$ with any  Bowen ball of type $B_n(y, \om, \vp)$ is contained in a ball of diameter:
\begin{equation}\label{kep}
C\cdot m(Df_{\om_1}| E^u(y, f_{\om_1}))^{-1}\cdot \ldots \cdot m(Df_{\om_n}| E^u(f_{\om_{n-1} \ldots \om_1}(y), f_{\om_n}))^{-1}. 
\end{equation}

Now to compute $t^{uA}_{G_1}$, consider a cover $\cal F$ of $\Lambda$ with balls $B_n(y, \om, \vp)$ for various trajectories $\om \in \Sigma_m^+$ and points $y \in \Lambda$. From Theorem \ref{invset}, the amalgamated pressure $P^A$ can also be computed with covers $\cal F$ of $\Lambda$ where all the Bowen balls in $\mathcal F$ correspond to trajectories of the same length $n$.

Let  an arbitrary  number $t > t^{uA}_{G_1}$. This implies that $P^A(t\Phi^u, G_1) < \beta<0$, for some $\beta<0$ which depends on $t$. However in the expression of $P^A(t\Phi^u, G_1)$, the consecutive sum of $t\Phi^u$ on $(y, \om)$ is given by,
\begin{equation}\label{consumt}
S_n(t\Phi^u)(y, \om) = -t \big( \log m(Df_{\om_1}| E^u(y, f_{\om_1})) + \ldots + \log m(Df_{\om_n}| E^u(f_{\om_{n-1} \ldots \om_1}(y), f_{\om_n})) \big).
\end{equation}

Thus, if $P^A(t\Phi^u, G_1) < \beta<0$, then for any $\vp>0$ small and any fixed integer $n>n(\vp)$, there exists a cover $\cal F$ of $\Lambda$ with Bowen balls $B_n(y, \om, \vp)$ (for various $y \in \Lambda, \om \in \Sigma_m^+$) satisfying, $$\mathop{\sum}\limits_{B_n(y, \om, \vp) \in \cal F} e^{S_n(t\Phi^u)(y, \omega)} < e^{n\beta}$$
Recalling that $W = \Delta \cap \Lambda$ and by using (\ref{kep}) and   (\ref{consumt}), we obtain, for any integer $n> n(\vp)$, the following inequality:
\begin{equation}\label{diamW}
\mathop{\sum}\limits_{\cal G} diam\big(W\cap B_n(y, \om, \vp)\big)^t \le C^t \cdot \mathop{\sum}\limits_{\cal F} \exp(S_n(t\Phi^u)(y, \om)) \le C^t e^{n\beta} 
\end{equation}

 However  if we take the collection $\cal G$ of intersections of type $\Delta \cap B_n(y, \om, \vp)$ for $B_n(y, \om, \vp) \in \cal F$, then $\cal G$ covers $W$.
Hence we obtain a cover of $W$ with sets of arbitrarily small diameters with the above property. But for $t$ and $\beta$  as above, there exists $n$ large enough such that $C^t e^{n\beta} < 1$. Thus from (\ref{diamW}), it follows that $HD(W) \le t$. Since $t$ was taken  arbitrarily larger than $t^{uA}_{G_1}$, one obtains $$HD(W) \le t^{uA}_{G_1}.$$

Moreover, for any $1 \le j \le m$ and any real number $t$, notice that 
 $$P^A(t\Phi^u, G_1) \le P(t\Phi^u, G_1, (j, j, j, \ldots)) = P(t\phi_j, f_j)$$ Thus if $t^u_j$ is the unique zero of  the (usual) pressure function   $t \to P(-t\log m(Df_j| E^u(x, f_j)))$ with respect to $f_j$, then we obtain $t^{uA}_{G_1} \le t^u_j$, for any $1 \le j \le m$.
 
 \end{proof}
   
Examples of such semigroups with stable and unstable cone fields can be obtained by taking the generator set $G_1 =\{id_{[0, 1]\times [0, 1]}, f_1, f_2\}$, for skew-products $f_i(x, y) = (g(x), h_i(x, y)), x \in I_1 \cup I_2\subset I, y \in I = [0, 1], I_1\cap I_2 = \emptyset$, $i = 1, 2$,  where $g: I_1 \cup I_2 \to I$ is expanding in $x$ and $h_i(x, \cdot)$ contract uniformly in $y$, similar to \cite{M-MZ}. 

\

 \textbf{Acknowledgements:} This  work was supported in part by grant PN III-P4-ID-PCE-2020-2693 from UEFISCDI Romania.

Eugen Mihailescu, \ Institute of Mathematics of the Romanian Academy,  Calea Grivitei 21, Bucharest, Romania.
 
Email: \ Eugen.Mihailescu@imar.ro \ \ \ \ \ Webpage: \  www.imar.ro/$\sim$mihailes

\end{document}